\documentclass{amsart}
\usepackage{amsmath,amscd,amsthm,amsfonts,a4wide,amssymb}
\usepackage[all,cmtip]{xy}
\usepackage{fancyhdr}
\pagestyle{fancy}
\fancyhf{} 
\fancyhead[LE,RO]{\thepage}
\fancyhead[CO]{TWISTOR LIFTS FOR HARMONIC MAPS}
\fancyhead[CE]{\leftmark}


\theoremstyle{plain}
\newtheorem{theorem}                 {Theorem}      [section]
\newtheorem{proposition}  [theorem]  {Proposition}
\newtheorem{corollary}    [theorem]  {Corollary}
\newtheorem{lemma}        [theorem]  {Lemma}

\theoremstyle{definition}
\newtheorem{example}      [theorem]  {Example}
\newtheorem{remark}       [theorem]  {Remark}
\newtheorem{definition}   [theorem]  {Definition}

\newcommand{\be}[1]{\begin{equation}\label{#1}}
   \newcommand{\ee}{\end{equation}}

\numberwithin{equation}{section}

\DeclareMathOperator{\Ima}{Im}
\DeclareMathOperator{\spa}{span}

\DeclareMathOperator{\Hom}{Hom}
\DeclareMathOperator{\rank}{rank}
\DeclareMathOperator{\pr}{\pi}
\DeclareMathOperator{\Tr}{trace}

\newcommand{\wt}{\widetilde}
\newcommand{\wh}{\widehat}
\newcommand{\pa}{\partial}

\def \nn{\mathbb N}
\def \zn{\mathbb Z}
\def \rn{\mathbb R}
\def \cn{\mathbb C}
\def \hn{\mathbb H}
\def \H{\mathcal H}
\def \Z{\mathcal Z}

\def \HH{\underline{\mathcal H}}

\def \Gr{Gr}

\def \g{\mathfrak{g}}

\def \O#1{{\rm O}(#1)}
\def \SO#1{{\rm SO}(#1)}

\def \U#1{{\rm U}(#1)}

\def \Sp#1{{\rm Sp}(#1)}

\def\ip#1#2{\langle#1,#2\rangle}

\def \CP#1{\mathbb{C}P^{#1}}
\def \RP#1{\mathbb{R}P^{#1}}
\def \HP#1{\mathbb{H}P^{#1}}

\def \d{\mathrm{d}}

	
\newcommand{\zbar}{\bar{z}}
\newcommand{\CC}{\underline{\mathbb C}}
\newcommand{\ov}{\overline}
\newcommand{\ul}{\underline}
\newcommand{\alg}{{\rm alg}}

\newcommand{\Hh}{\mathcal{H}} 
\newcommand{\Vv}{\mathcal{V}} 

\newcommand{\eu}{\mathrm{e}} 
\newcommand{\ii}{\mathrm{i}} 

\begin{document}
\bigskip

\title{New constructions of twistor lifts for harmonic maps}

\author{Martin Svensson} 
\author{John C. Wood}

\keywords{harmonic map, twistor, Grassmannian model, non-linear sigma model}

\subjclass[2000]{53C43, 58E20}

\thanks{The first author was supported by the Danish Council for Independent Research under the project \emph{Symmetry Techniques in Differential Geometry}.
The second author thanks the Department of Mathematics and Computer Science of the University of Southern Denmark, Odense, for support and hospitality during part of the preparation of this work.}

\address
{Department of Mathematics \& Computer Science, University of
Southern Denmark, and CP3--Origins, Centre of Excellence for Particle Physics and Phenomenology, Campusvej 55, DK-5230 Odense M, Denmark}
\email{svensson@imada.sdu.dk}

\address
{Department of Pure Mathematics, University of Leeds, Leeds LS2 9JT, Great Britain}
\email{j.c.wood@leeds.ac.uk}

\begin{abstract}  
We show that given a harmonic map $\varphi$ from a Riemann surface to a
classical compact simply connected inner symmetric space, there is a $J_2$-holomorphic twistor lift of 
$\varphi$ (or its negative) \emph{if and only if}\/ it is nilconformal.
In the case of harmonic maps of finite uniton number, we give algebraic formulae in terms of holomorphic data which describes their extended solutions.  In particular, this gives explicit formulae for the twistor lifts of all harmonic maps of finite uniton number from a surface to the above symmetric spaces.
\end{abstract}

\maketitle

\section{Introduction}

\emph{Harmonic maps} are smooth maps between Riemannian manifolds which extremize the `Dirichlet' energy integral (see, for example, \cite{eells-lemaire}).
Harmonic maps from surfaces to symmetric spaces are of particular interest to both geometers, as they include minimal surfaces, and to theoretical physicists, as they constitute the non-linear $\sigma$-model of particle physics. 
Twistor methods for finding such harmonic maps have been around for a long time; a general theory was given by F.~E.\ Burstall and J.~H.\ Rawnsley  \cite{burstall-rawnsley}, see also \cite{davidov-sergeev}.  The idea is to find a \emph{twistor fibration (for harmonic maps)} --- this is a fibration $Z \to N$ from an almost complex manifold $Z$, called a \emph{twistor space}, to a Riemannian manifold $N$ with the property that holomorphic maps from (Riemann) surfaces to $Z$ project to harmonic maps to $N$.
For a symmetric space $N$, twistor spaces exist if $N$ is \emph{inner} \cite{burstall-rawnsley}, i.e., has inner Cartan involution; then they are generalized flag manifolds equipped with a certain non-integrable complex structure $J_2$.
All harmonic maps from the $2$-sphere arise this way, i.e., have a \emph{twistor lift} to a suitable flag manifold, see \cite{burstall-rawnsley}.

Burstall \cite{burstall-grass} showed that, given a harmonic map $\varphi$ from a surface to a complex Grassmannian, there is a twistor lift of $\varphi$ or its orthogonal complement $\varphi^{\perp}$ if and only if $\varphi$ is \emph{nilconformal} in the sense that its derivative is nilpotent. 
We extend this result to other classical symmetric spaces as follows. 

\begin{theorem} \label{th:main}
Let $\varphi$ be a harmonic map from a Riemann surface to a classical compact simply connected inner symmetric space.  Then there is a twistor lift of\/ $\varphi$ or $-\varphi$ if and only if\/ $\varphi$ is nilconformal.
\end{theorem}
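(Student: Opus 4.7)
My strategy is to reduce the theorem to Burstall's Grassmannian version \cite{burstall-grass} by embedding the target $N$ totally geodesically into a suitable complex Grassmannian, applying Burstall's theorem there, and then showing that the resulting canonical twistor lift is compatible with the additional algebraic structure cutting out $N$ inside the Grassmannian.

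The ``only if'' direction is essentially structural. The non-integrable complex structure $J_2$ on a flag manifold $Z=G/H$ comes from a grading $\g^{\cn}=\bigoplus_k\g_k$ whose $(1,0)$-part for $J_2$ is an $\Ad$-nilpotent subspace. A $J_2$-holomorphic lift $\psi:M\to Z$ therefore has $\pa\psi$ valued in that nilpotent subbundle, and projecting via $\pi:Z\to N$ identifies $\pa\varphi=\pi_*\pa\psi$ with a section of an $\Ad$-nilpotent piece of $\varphi^{-1}(G\times_H\g^{\cn})$, so $\pa\varphi$ is nilpotent as a bundle endomorphism and $\varphi$ is nilconformal.

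For the ``if'' direction I would exploit the standard totally geodesic embeddings of the remaining classical inner compact symmetric spaces into complex Grassmannians. The spaces $\SO{2n}/\U{n}$ and $\Sp{n}/\U{n}$ are themselves complex Grassmannians of maximal isotropic subspaces of $(\cn^{2n},\beta)$, where $\beta$ is a non-degenerate symmetric or skew form. Quaternionic Grassmannians $\Sp{n}/(\Sp{k}\times\Sp{n-k})$ sit inside $G_{2k}(\cn^{2n})$ as the locus of $j$-invariant subspaces, and real Grassmannians $\SO{2n}/(\SO{2k}\times\SO{2n-2k})$ sit inside $G_{2k}(\cn^{2n})$ as the fixed locus of a complex conjugation. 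Composing $\varphi$ with the relevant embedding $\iota$ yields a nilconformal harmonic map into a complex Grassmannian, and Burstall's theorem supplies a $J_2$-holomorphic twistor lift of $\iota\circ\varphi$ or $\iota\circ\varphi^{\perp}$. The key point is that this lift can be built canonically from the iterated nilpotent operator $\pa'\varphi$: the image and kernel filtrations $\iim(\pa'\varphi)^k$ and $\kker(\pa'\varphi)^k$ are holomorphic subbundles by Koszul--Malgrange, and their successive quotients assemble into a flag that provides the lift.

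The main obstacle is to show that this canonical flag actually lies in a twistor bundle $Z_N\to N$ rather than only in one of the ambient Grassmannian. Because the filtrations are attached functorially to $\pa'\varphi$, they commute with whatever conjugation, symplectic pairing, or quaternionic structure distinguishes $N$ from its ambient complex Grassmannian, and so the flag should inherit the correct symmetry. Turning this expectation into a proof requires, for each of the four remaining classical families, identifying the appropriate twistor space $Z_N=G/H'$ of $N$, verifying that the canonical flag built from the filtrations defines a smooth map into $Z_N$, and checking that the restriction of $J_2$ from $Z_G$ to $Z_N$ still renders the lift $J_2$-holomorphic with projection $\varphi$ or $\varphi^{\perp}$. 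This equivariance and bookkeeping, rather than any new conceptual step beyond Burstall's, is where I expect the technical weight of the argument to lie.
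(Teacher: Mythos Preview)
Your overall architecture matches the paper's: embed $N$ in a complex Grassmannian, build an $A^{\varphi}_z$-filtration, and check it respects the extra structure. The ``only if'' direction is fine. But there is a real gap in the ``if'' direction for the real (and symplectic) cases, precisely at the step you flag as ``equivariance and bookkeeping''.

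Your claim that Burstall's canonical filtration $Z_i=\Ima(A^{\varphi}_z)^i$ inherits the relevant involution is false. For a real harmonic map, the involution on filtrations sends $(Z_i)$ to $(\ov{Z}_{t+1-i}^{\perp})$; applied to the image filtration this yields the \emph{kernel} filtration $\ker(A^{\varphi}_z)^{t+1-i}$, which is in general different (the paper records this explicitly in Examples~\ref{ex:burstall-dual} and~\ref{ex:dual}). So Burstall's lift does not land in the real twistor space $F^{\rn}_{d_0,\ldots,d_s}$, and no amount of bookkeeping will place it there. The same obstruction occurs with $J$ in the symplectic case.

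What the paper actually does (Proposition~\ref{th:lift-with-uniton-real}) is construct a filtration that is symmetric \emph{by design}: one first produces an isotropic uniton $\alpha$, builds a partial filtration below $\alpha$, and reflects it via $Z\mapsto\ov{Z}^{\perp}$ to get the part above $\ov{\alpha}^{\perp}$. The delicate step is filling the gap between $\ov{\alpha}^{\perp}$ and $\alpha$; this is done inductively by an isotropy argument, and in the Grassmannian case one further needs a \emph{maximal isotropic} holomorphic subbundle of $\varphi^{\perp}$, whose existence uses the orientability of $\varphi^{\perp}$ (hence the hypothesis that $k(n-k)$ be even, or that one work in $\wt G_k(\rn^n)$). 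None of this is visible from Burstall's construction, and it is a genuine additional idea rather than a verification.
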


Any classical compact simply connected inner symmetric space is the product of 
irreducible ones: it suffices to prove our result for these.  According to
\cite[p.\ 38]{burstall-rawnsley} they are
(i) the complex Grasmannians $G_k(\cn^n)$,
(ii) the oriented real Grassmannians $\wt G_k(\rn^n)$ with
$k(n-k)$ even, 
(iii) the space $\SO{2m} \big/ \U{m}$ of (positive) orthogonal complex structures on $\rn^{2m}$,
 (iv)  quaternionic Grassmannians $G_k(\hn^n)$, and
(v) the space $\Sp{m} \big/ \U{m}$ of `quaternionic' complex structures on $\cn^{2m}$.  In the Grassmannian cases (i), (ii) and (iv), $-\varphi$ means the map $\varphi^{\perp}: p \mapsto \varphi(p)^{\perp}$.

To establish our result, we introduce the idea of \emph{$A^{\varphi}_z$-filtrations}, first of all for harmonic maps to complex Grassmannians, and show how these are related to twistor lifts, see
Proposition \ref{pr:J2-from-splits}.
(In fact, we can find such filtrations incorporating any given uniton, giving us the existence of twistor lifts associated to that uniton, see Theorem \ref{th:lift-with-uniton}.)   Then we adapt our technique to the `real' cases, showing in a constructive way how to build twistor lifts of harmonic maps from a surfaces to a real Grassmannian $\wt G_k(\rn^n)$ with
$k(n-k)$ even, and to the space $\O{2m} \big/ \U{m}$, see Propositions \ref{th:lift-with-uniton-real} and \ref{th:lift-OCS}.  Similar results hold for maps to quaternionic projective space and to the space $\Sp{m} \big/ \U{m}$, see \S \ref{sec:sympl}.  Putting these results together gives our theorem.

Note that harmonic maps to oriented real Grassmannians with $k(n-k)$ odd can be dealt with by embedding them in higher-dimensional Grassmannians (see Remark
\ref{re:odd-even}), and that harmonic maps to Grassmannians $G_k(\rn^n)$ of \emph{unoriented} real subspaces are covered by those to oriented ones if a certain Steifel--Whitney class vanishes (Remark \ref{re:unoriented}). 
	
Nilconformal harmonic maps include all harmonic maps of finite uniton number (Example \ref{ex:can-filt}).  They also include \emph{strongly conformal} harmonic maps, in particular the superconformal harmonic maps from the plane or a torus studied in \cite{bolton-woodward,bolton-pedit-woodward}, see
Example \ref{ex:superconf}; such maps from a torus are of finite type but not of finite uniton number \cite{pacheco-tori}.  

In the case that $\varphi$ has an \emph{extended solution} $\Phi$ (always true locally), we show how $A^{\varphi}_z$-filtrations are equivalent to certain other filtrations, called \emph{$F$-filtrations}, of G.\ Segal's Grassmannian model \cite{segal} of $\Phi$. We can then compute the twistor lift from the  $F$-filtration and $\Phi$.    We identify the $F$-filtration which gives Burstall's twistor lift.
When $\varphi$ has finite uniton number, we may choose $\Phi$ to be polynomial; in that case,  we have a natural $F$-filtration which leads to a new twistor lift, which we call the \emph{canonical twistor lift}, see Theorem \ref{th:can-lift}.
Again, we can adapt these techniques to find twistor lifts of harmonic maps to
the other classical compact simply connected inner symmetric spaces, see Corollaries \ref{co:real-twistor-lifts} and \ref{co:twistor-lifts-OCS}, and \S \ref{sec:sympl}.

In the case that $\varphi$ has \emph{finite uniton number} we can do these constructions \emph{explicitly}, as follows, see \S \ref{sec:explicit}.
In \cite{ferreira-simoes-wood}, simple formulae for the unitons of the factorization due to G.\ Segal were found,  thus giving explicit algebraic formulae for all harmonic maps of finite uniton number from a Riemann surface to the unitary group and complex Grassmannians, not involving any integration.  Such formulae for K.\ Uhlenbeck's factorization \cite{uhlenbeck} --- which is dual to that of Segal --- appeared in \cite{dai-terng}.
In \cite{unitons}, it was shown how these formulae are extreme cases of a general method of finding explicit formulae for uniton factorizations, and the method was adapted to construct harmonic maps to the orthogonal and symplectic groups, the real and quaternionic Grassmannians, and the spaces $\SO{2m} \big/ \U{m}$ and $\Sp{m} \big/ \U{m}$, thus finding all harmonic maps to classical Lie groups and their inner symmetric spaces explicitly in terms of algebraic data.

In the present paper, we use those formulae to obtain explicit algebraic formulae for the $J_2$-holomorphic twistor lifts of arbitrary harmonic maps of finite uniton number from a Riemann surface to a complex Grassmannian in terms of the freely chosen holomorphic data
which give the unitons of the harmonic map.    We then find the algebraic conditions on the holomorphic data which give the twistor lifts of harmonic maps to
real and quaternionic Grassmannians, and to the spaces $\SO{2m} \big/ \U{m}$ and $\Sp{m} \big/ \U{m}$.   In particular, this gives explicit formulae for all harmonic maps from the two-sphere to the classical compact simply connected inner symmetric spaces and their twistor lifts.

We thank the referee for some very useful comments.

\section{preliminaries} \label{sec:prelims}

\subsection{Harmonic maps from surfaces to a Lie group} \label{subsec:Lie-groups}
Throughout the paper, all manifolds, bundles, and structures on them will be taken to be smooth, i.e., $C^{\infty}$.  By `surface' we shall mean `Riemann surface', i.e., `connected $1$-dimensional complex manifold'; we do not assume compactness.  Harmonic maps from surfaces exhibit conformal invariance (see, for example, \cite{wood60}) so that the concept of harmonic map from a Riemann surface is well defined.  In the case of maps from a Riemann surface $M$
to a Lie group $G$, we can formulate the harmonicity equations in the following way \cite{uhlenbeck,guest-book}.

For any smooth map $\varphi:M\to G$, set $A^{\varphi}=\frac{1}{2}\varphi^{-1}\d\varphi$;
thus $A^{\varphi}$ is a $1$-form with values in the Lie algebra $\g$ of $G$;
note that it is half the pull-back of the Maurer--Cartan form of $G$. 

Now, any compact Lie group can be embedded in the unitary group
$\U n$, so we first consider that group.  The group $\U n$ acts on $\cn^n$ in the standard way.  Let $\CC^n$ denote the trivial complex bundle $\CC^n = M \times \cn^n$, then
$D^{\varphi} = \d+A^{\varphi}$
defines a unitary connection on $\CC^n$.
We decompose $A^{\varphi}$ and $D^{\varphi}$ into types; for convenience we do this by taking a
local complex coordinate $z$ on an open set $U$ of $M$.  Explicitly, on writing
$\d\varphi = \varphi_z \d z + \varphi_{\zbar}\d\zbar$,
$A = A^{\varphi}_z \d z +  A^{\varphi}_{\zbar} \d\zbar$,
$D^{\varphi} = D^{\varphi}_z \d z + D^{\varphi}_{\zbar} \d\zbar$,
$\pa_z = \pa/\pa z$ and $\pa_{\zbar} = \pa/\pa\zbar$, we have
\be{type-decomp}
A^{\varphi}_z=\frac{1}{2}\varphi^{-1}\varphi_z\,,\quad A^{\varphi}_{\zbar}=\frac{1}{2}\varphi^{-1}\varphi_{\zbar}\,, \quad
D^{\varphi}_z = \pa_z + A^{\varphi}_z \,,\quad
D^{\varphi}_{\zbar} = \pa_{\zbar} + A^{\varphi}_{\zbar} \,.
\ee

By the \emph{(Koszul--Malgrange) holomorphic structure \cite{koszul-malgrange} induced by $\varphi$} we mean the unique holomorphic structure on $\CC^n$ with $\bar{\pa}$-operator given on each coordinate domain $(U,z)$ by $D^{\varphi}_{\bar z}$;  we denote the resulting holomorphic vector bundle by $(\CC^n, D^{\varphi}_{\bar z})$.   If $\varphi$ is constant, then $D^{\varphi}_{\bar z} = \pa_{\zbar}$ giving $\CC^n$ the
\emph{standard (product) holomorphic structure}. 
Uhlenbeck \cite{uhlenbeck} showed that \emph{a smooth map $\varphi:M\to G$ is harmonic
if and only if, on each coordinate domain, $A^{\varphi}_z$ is a holomorphic endomorphism of the holomorphic vector bundle $(\CC^n, D^{\varphi}_{\bar z})$}. 
For later use, note that, if $\varphi$ is replaced by $g\varphi$ for some $g \in \U{n}$, then
all the quantities in \eqref{type-decomp} are unchanged.

Let $\nn = \{0,1,2,\ldots\}$. For any $n \in \nn$ and $k \in \{0,1,\ldots,n\}$, let $G_k(\cn^n)$ denote the Grassmannian of
$k$-dimensional subspaces of $\cn^n$; it is convenient to write
 $G_*(\cn^n)$ for the disjoint union
$\cup_{k=0,1,\ldots,n}G_k(\cn^n)$.
We shall often identify, without comment, a smooth map
$\varphi:M \to G_k(\cn^n)$ with the rank $k$ subbundle 
of $\CC^n = M \times \cn^n$ whose fibre at $p \in M$ is $\varphi(p)$; we denote this subbundle
also by $\varphi$, not underlining
this as in, for example, \cite{burstall-wood, ferreira-simoes, ferreira-simoes-wood}.

For a subspace $V$ of $\cn^n$ we denote by $\pi_V$
(resp.\ $\pi_V^{\perp}$) orthogonal projection from $\cn^n$ to $V$
(resp.\ to its orthogonal complement $V^{\perp}$); we use the same notation for orthogonal projection from $\CC^n$ to a subbundle.
Recall the Cartan embedding: 
\begin{equation} \label{cartan}
\iota:G_*(\cn^n)\hookrightarrow \U n,\quad \iota(V)=\pi_V-\pi_{V}^{\perp}\,;
\end{equation}
this is totally geodesic, and isometric up to a constant factor.
We shall identify $V$ with its image $\iota(V)$; since $\iota(V^{\perp}) = -\iota(V)$,
this identifies $V^{\perp}$ with $-V$.
 
\subsection{Harmonic maps from surfaces to complex Grassmannians} \label{subsec:Grass}

Harmonic maps from surfaces to  Grassmannians were studied by Burstall and the second author in \cite{burstall-wood} where the following definitions were made.  Any subbundle $\varphi$ of $\CC^n$ inherits a metric by restriction, and a connection $\nabla_{\!\varphi}$ by orthogonal projection:
$$
(\nabla_{\!\varphi})_Z(v) = \pi_{\varphi}(\pa_Z v)
	\qquad (Z \in \Gamma(TM), \ v \in \Gamma(\varphi)\,);
$$
here $\Gamma(\cdot)$ denotes the space of (smooth) sections of a vector bundle.

Let $\varphi$ and $\psi$ be two mutually orthogonal subbundles of
$\CC^n$.  Then, by the
\emph{$\pa'$ and $\pa''$-second fundamental forms of\/ $\varphi$ in $\varphi \oplus \psi$} we mean the vector bundle morphisms
$A'_{\varphi,\psi}\,, A''_{\varphi,\psi}:\varphi \to \psi$
defined on each coordinate domain $(U,z)$ by
\be{2nd-f-f}
A'_{\varphi,\psi}(v) = \pi_{\psi}(\pa_z v) \quad \text{and} \quad
A''_{\varphi,\psi}(v) = \pi_{\psi}(\pa_{\zbar}v)
	\qquad (v \in \Gamma(\varphi)\,).
\ee

The second fundamental forms
$A'_{\varphi} = A'_{\varphi,\varphi^{\perp}}:\varphi \to \varphi^{\perp}$ and
$A''_{\varphi} = A''_{\varphi,\varphi^{\perp}}:\varphi \to \varphi^{\perp}$
are particularly important as, on identifying $\varphi: M \to G_*(\cn^n)$ with its composition
$\iota\circ\varphi:M \to \U n$ with the Cartan embedding, it is easily seen that
the fundamental endomorphism
$A^{\varphi}_z$ of \eqref{type-decomp} is minus the direct sum of $A'_{\varphi}$
and $A'_{\varphi^{\perp}}$, similarly the connection
$D^{\varphi}$ of the last section is the direct sum of
$\nabla_{\!\varphi}$ and $\nabla_{\!\varphi^{\perp}}$.
It follows that \emph{a smooth map $\varphi:M \to G_*(\cn^n)$ is
harmonic if and only if\/ $A'_{\varphi}$ is holomorphic, i.e.,
$A'_{\varphi} \circ \nabla''_{\varphi}
	= \nabla''_{\varphi^{\perp}} \circ A'_{\varphi}$}\,,
where we write $\nabla''_{\varphi} = (\nabla_{\!\varphi})_{\pa/\pa\zbar}$;
this can be shown without reference to $\U n$, see
\cite[Lemma 1.3]{burstall-wood}.

Now, for any holomorphic (or antiholomorphic) endomorphism $E$, at points where it
 does not have maximal rank, we shall `fill out zeros'  as in
\cite[Proposition 2.2]{burstall-wood} (cf.\ \cite[\S 3.1]{unitons})
to make its image and kernel into subbundles $\Ima E$ and $\ker E$ of $\CC^n$.
In particular, we obtain subbundles $G'(\varphi) = \Ima A' _{\varphi}$
and $G''(\varphi) = \Ima A''_{\varphi}$ called the
\emph{$\pa'$- and $\pa''$-Gauss transforms} or \emph{Gauss bundles of\/ $\varphi$}.  Note that,
\emph{if\/ $\varphi$ is harmonic, then so are its Gauss transforms.}  This can be seen
by using diagrams as in \cite[Proposition 2.3]{burstall-wood}, or by noting that it is a special case of adding a uniton, cf.\ \cite{wood60}.

By iterating these constructions we obtain the \emph{$i$th $\pa'$-Gauss transform}
 $G^{(i)}(\varphi)$ defined by $G^{(1)}(\varphi) = G'(\varphi)$,  $G^{(i)}(\varphi) = G'(G^{(i-1)}(\varphi))$, and the \emph{$i$th $\pa''$-Gauss transform} $G^{(-i)}(\varphi)$
defined by $G^{(-1)}(\varphi) = G''(\varphi)$,  $G^{(-i)}(\varphi) = G''(G^{(-i+1)}(\varphi))$; on setting $G^{(0)}(\varphi) = \varphi$,
we obtain a sequence $(G^{(i)}(\varphi))_{i \in \zn}$ (where $\zn$ denotes the set of integers)
of harmonic maps called \cite{wolfson} the
\emph{harmonic sequence of\/ $\varphi$}.

\section{Twistor spaces and lifts} \label{sec:twistor-spaces}

\subsection{Twistor spaces of complex Grassmannians} \label{subsec:twistor-Grass}
Let $N$ be a Riemannian manifold.  By a \emph{twistor fibration of $N$
(for harmonic maps)} is meant
\cite{burstall-rawnsley} an almost complex manifold (called a \emph{twistor space}) 
$(Z,J)$ and a fibration $\pi:Z \to N$ such that, for every holomorphic map from a Riemann surface
$\psi:M \to Z$, the composition $\varphi = \pi \circ \psi:M \to N$ is harmonic.
(To deal with higher-dimensional domains, the definition is unchanged if we replace `Riemann surface' by `cosymplectic manifold', i.e., `almost Hermitian manifold with co-closed
K\"ahler form' \cite{burstall-rawnsley,salamon}.)
Then $\varphi$ is called the  \emph{twistor projection of\/ $\psi$}, and $\psi$ is called
\emph{a twistor lift of\/ $\varphi$}.    For an even-dimensional Riemannian manifold $N$, the primary example is the bundle $Z \to N$ of almost Hermitian structures whose fibre at $q \in N$ consists of all \emph{orthogonal complex structures}, i.e.\ \emph{almost complex structures compatible with the metric}. When $Z$ is equipped with a suitable non-integrable almost complex structure $J_2$, the bundle $Z \to N$ becomes a twistor fibration for harmonic maps, see \cite{eells-salamon} and
\cite[Chapter 2]{burstall-rawnsley}.
If $N$ is orientable, we may consider the subbundle $Z^+ \to N$ of \emph{positive} almost Hermitian structures; however, $Z$ and $Z^+$ are usually too large to be useful and we look for subbundles of them.

For symmetric spaces, a general theory of such
twistor fibrations is given in \cite{burstall-rawnsley}.  We shall now describe those twistor spaces for a complex Grassmannian; for the real and symplectic cases, see \S \ref{sec:real}\textit{ff.}  For any complex vector spaces or vector bundles $E$, $F$,
$\Hom(E,F) =  \Hom_{\cn}(E,F)$ will denote the vector space or bundle of (complex-)linear maps from $E$ to $F$.

Let $n,t,d_0,d_1, \ldots, d_t$ be positive integers with $\sum_{i=0}^t d_i = n$. Let $F=F_{d_0,\ldots, d_t}$ be the (complex) flag manifold
$\U n \big/ \U{d_0}\times\dots\times\U{d_t}$. The elements of $F$ are $(t+1)$-tuples $\psi = (\psi_0,\psi_1, \ldots, \psi_t)$ of mutually orthogonal subspaces with $\psi_0\oplus\dots\oplus \psi_t=\cn^n$; we call these subspaces the \emph{legs (of\/ $\psi$)}. There is a canonical embedding of $F$ into the product $G_{d_0}(\cn^n)\times\dots\times G_{d_0+\dots+d_{t-1}}(\cn^n)$ given by sending 
$(\psi_0,\psi_1, \ldots,\psi_t)$ to its \emph{associated flag} $(T_0,\dots,T_{t-1})$ where $T_i=\sum_{j=0}^i\psi_j$\,;
the restriction to $F$ of the K\"ahler structure on this product is an (integrable) complex structure
which we denote by $J_1$.
Then the $(1,0)$-tangent space to $F$ at $(\psi_0,\psi_1, \ldots, \psi_t)$ with respect to $J_1$ is given by
\be{T10J1} 
T^{J_1}_{(1,0)}F = \sum_{0\leq i<j\leq t}\Hom(\psi_i,\psi_j).
\ee
Set $k = \sum_{j=0}^{[t/2]}d_{2j}$ and $N = G_k(\cn^n)$. We define
a mapping which gives the sum of the `even' legs:
\be{twistor-proj}
\pi_e: F_{d_0,\ldots, d_t} \to G_k(\cn^n),\qquad \psi = (\psi_0,\psi_1, \ldots,\psi_t)\mapsto\sum_{j=0}^{[t/2]}\psi_{2j}.
\ee

The projection $\pi_e$ is a Riemannian submersion with respect to the natural homogeneous metrics on $F$ and $G_k(\cn^n)$ so that each tangent space decomposes into the orthogonal direct sum of the \emph{vertical space}, made up of the tangents to the fibres and the
\emph{horizontal space}, its orthogonal complement.
The $(1,0)$-horizontal and vertical spaces with respect to $J_1$ are given by
$$
\Hh^{J_1}_{(1,0)}
= \sum_{\substack{i,j=0,\dots,t \\ i<j,\ j-i \text{ odd}}}\Hom(\psi_i,\psi_j)\,,
\qquad
\Vv^{J_1}_{(1,0)} =
\sum_{\substack{i,j=0,\dots,t\\ i<j,\ j-i \text{ even}}} \Hom(\psi_i,\psi_j).
$$ 

We define an almost complex structure $J_2$ by changing the sign of $J_1$ on the vertical space;
thus the $(1,0)$-horizontal space is unchanged, but the $(1,0)$ vertical space is different (unless $t=1$ when it is zero): 
$$
\Hh^{J_2}_{(1,0)}
= \sum_{\substack{i,j=0,\dots,t \\ i<j,\ j-i \text{ odd}}}\Hom(\psi_i,\psi_j)\,,
\qquad
\Vv^{J_2}_{(1,0)}=\sum_{\substack{i,j=0,\dots,t\\ j < i,\ j-i \text{ even}}} \Hom(\psi_i,\psi_j).
$$
This almost complex structure is never integrable except in the trivial case $t=1$,
see \cite{burstall-salamon}.   

Now let $M$ be a Riemann surface, which we shall always assume connected,
but not necessarily compact, and
let $\psi = (\psi_0,\psi_1, \ldots, \psi_t): M \to F$ be a smooth map; we shall call such a map a \emph{moving flag}.  
{}From the above description we immediately obtain the following \cite{burstall-grass}.

\begin{proposition} \label{pr:J1-J2}
Let $\psi = (\psi_0,\psi_1, \ldots, \psi_t):M \to F$ be a smooth map.
Then
\begin{enumerate}
\item[(i)] $\psi$ is $J_1$-holomorphic if and only if\/ $A'_{\psi_i, \psi_j} = 0$ whenever $i-j$ is positive$;$
\item[(ii)] $\psi$ is $J_2$-holomorphic if and only if
\begin{equation}\label{J2-cond}
A'_{\psi_i, \psi_j} = 0\text{ when $i-j$ is positive and odd, or $j-i$ is positive and even}.
\qed
\end{equation}
\end{enumerate}
\end{proposition}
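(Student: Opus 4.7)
The plan is to unpack $J$-holomorphy directly against the explicit descriptions of $T^{J_1}_{(1,0)}F$ and $T^{J_2}_{(1,0)}F$ recorded in the excerpt, after identifying the complexified differential of $\psi$ with the collection of second fundamental forms among the legs.

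First I would set up the pointwise identification
$T_{\psi(p)}F\otimes\cn \;\cong\; \bigoplus_{i\neq j}\Hom(\psi_i(p),\psi_j(p))$,
in which the real structure exchanges $\alpha_{ij}\in\Hom(\psi_i,\psi_j)$ with $-\alpha_{ij}^{*}\in\Hom(\psi_j,\psi_i)$. Using a local coordinate $z$ on $M$, the key observation is that for each $i\neq j$ the $(i,j)$-block of $d\psi(\pa_z)$ is precisely $A'_{\psi_i,\psi_j}$ as defined in \eqref{2nd-f-f}. This is the flag analogue of the Grassmannian identification used in \S\ref{subsec:Grass}: composing $\psi$ with projection to its $i$th leg yields a map into $G_{d_i}(\cn^n)$ whose $(1,0)$-differential is $A'_{\psi_i,\psi_i^{\perp}}$, and splitting $\psi_i^{\perp}=\bigoplus_{j\neq i}\psi_j$ produces the claimed block decomposition.

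Given this, $\psi$ is $J$-holomorphic if and only if $d\psi(\pa_z)\in T^J_{(1,0)}F$, which by the identification means $A'_{\psi_i,\psi_j}=0$ for every ordered pair $(i,j)$ such that the summand $\Hom(\psi_i,\psi_j)$ is \emph{absent} from the prescribed $T^J_{(1,0)}F$. Reading this off from the formulas in the excerpt: for $J_1$ the summands present are precisely those with $i<j$, so the forbidden pairs are those with $i-j>0$, giving (i). For $J_2$ the summands present are those with $i<j$ and $j-i$ odd (horizontal) together with those with $j<i$ and $i-j$ even (vertical), so the forbidden pairs are exactly those with $i-j$ positive and odd, or with $j-i$ positive and even, giving (ii).

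The only step that needs genuine care is justifying the block decomposition of $d\psi$ in terms of the $A'_{\psi_i,\psi_j}$; once that identification is in hand, the two statements follow by a purely combinatorial comparison of index sets, which is presumably why the authors describe the result as something they ``immediately obtain.''
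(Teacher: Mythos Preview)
Your proposal is correct and is precisely the argument the paper has in mind: the authors give no proof beyond the phrase ``from the above description we immediately obtain the following'' and the \qed, and that description is exactly the explicit formulas for $T^{J_1}_{(1,0)}F$, $\Hh^{J_2}_{(1,0)}$, and $\Vv^{J_2}_{(1,0)}$ against which you compare the blocks $A'_{\psi_i,\psi_j}$ of $d\psi(\pa_z)$. Your combinatorial reading of the forbidden index pairs for each $J$ matches the statement exactly, and your remark that the only substantive point is the block identification of the differential is well taken.
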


\begin{remark}  \label{rem:aH-strs}
(i) Using Proposition \ref{pr:J1-J2}, it can be shown that  \emph{$\pi_e:(F,J_2) \to G_*(\cn^n)$ is a twistor fibration for harmonic maps}.
This is established in \cite[Corollary 5.10]{burstall-rawnsley}; 
see \cite{burstall-salamon} for a direct proof.   By noting that $F$ can be embedded in the bundle of all almost Hermitian structures preserving $J_1$ and $J_2$, we can also deduce it from  \cite[Theorem 3.5]{salamon} or \cite[Theorem 5.5]{rawnsley}.

(ii) Let $y \in G_k(\cn^n)$.  For each $\psi$ in the fibre $(\pi_e)^{-1}(y)$, the differential $(\d\pi_e)_{\psi}$ of the twistor projection at $\psi$  restricts to an isometry of the horizontal space at $\psi$ to $T_yG_k(\cn^n)$.  We can use this to transfer
the almost complex structure $J_1|_{\Hh} = J_2|_{\Hh}$ on the horizontal space to an almost Hermitian structure on $T_y G_k(\cn^n)$. This defines an inclusion map $i$ of $F$ in the bundle $Z \to N$ of almost Hermitian structures on $G_k(\cn^n)$, see \cite{rawnsley}, showing how we may regard $F$ as a subbundle of that bundle. 

If now, $\psi:M \to F$ is a $J_1$ or $J_2$-holomorphic map, for each $p \in M$, the differential of $\varphi$ at $p$ intertwines the almost complex structure of $M$ at $p$ with
the almost complex structure $i \circ \psi(p)$ on  $T_{\varphi(p)} G_k(\cn^n)$; thus
the map $\varphi$ is `rendered holomorphic'.  
\end{remark}

\subsection{$J_2$-holomorphic lifts of nilconformal maps from $A_z^{\varphi}$-filtrations} \label{subsec:J2-lifts}
We develop a general method for constructing $J_2$-holomorphic lifts from certain filtrations, which will culminate in Proposition \ref{pr:J2-from-splits}.
We first describe the filtrations we need; 
again, $M$ will denote an arbitrary (connected) Riemann surface.

\begin{definition}\label{def:Az-filt}
Let $\varphi:M \to \U{n}$ be a smooth map.
 Let $(Z_i)$ be a finite sequence of subbundles of $\CC^n$ which forms a \emph{filtration} of $\CC^n$:
\be{Az-filt}
\CC^n =Z_0  \supset  Z_1  \supset  \cdots  \supset  Z_t  \supset  Z_{t+1} = \ul{0}\,. 
\ee
We call $(Z_i)$ an \emph{$A_z^{\varphi}$-filtration (of length $t$)} if,
for each $i = 0,1,\ldots, t$,
\begin{enumerate}
 \item[(i)] $Z_i$ is a holomorphic subbundle, i.e.,
		$\Gamma(Z_i)$ is closed under $D_{\zbar}^{\varphi}$\,;
 \item[(ii)] $A_z^{\varphi}$ maps $Z_i$ into the smaller subbundle $Z_{i+1}$\,. 
\end{enumerate}
\end{definition}

Let $\varphi:M \to \U n$ be a smooth map. Say that $\varphi$ is \emph{nilconformal}
if $A_z^{\varphi}$ is nilpotent, i.e., $(A_z^{\varphi})^r=0$ for some non-negative
integer $r$.  Then  \emph{$A_z^{\varphi}$-filtrations exist if and only if\/ $\varphi$
is nilconformal}.
Note that $\varphi$ is nilconformal if and only if $g\varphi$ is for any $g \in \U n$.  Burstall \cite{burstall-grass} calls a smooth map $\varphi:M \to G_*(\cn^n)$ from a surface to a Grassmannian \emph{nilconformal} if
$(A_z^{\varphi})^r|_{\varphi}=0$ for some $r$; since this implies that $(A_z^{\varphi})^{r+1}|_{\varphi^{\perp}}=0$, nilconformality of $\varphi$ in Burstall's sense is equivalent to both nilconformality of $\varphi^{\perp}$ in his sense and nilconformality of $\varphi$ in our sense.

\emph{Any nilconformal map is weakly conformal}; indeed, by nilpotency of $A_z^{\varphi}$
we have $\Tr (A_z^{\varphi})^2 = 0$, which is easily seen to be the condition for weak conformality (cf.\ \cite{burstall-grass}).
Also, \emph{any harmonic map of finite uniton number is nilconformal}, see Example \ref{ex:can-filt} below.

For convenience, if $E$ and $F$ are subspaces of $\cn^N$, or subbundles of $\CC^N$ \ 
$(N \in \nn)$,  and $F \subset E$,
we write $E \ominus F$ to mean $F^{\perp} \cap E$.

 Given a filtration $(Z_i)$ of $\CC^n$ of length $t$, we define its \emph{legs} $\psi_i$ by 
\be{alt-Z}
\psi_i = Z_i \ominus Z_{i+1}\,, \quad \text{equivalently}, \quad
Z_i = \sum_{j \geq i} \psi_j    \quad (i = 0,1, \ldots, t+1) \,. 
\ee
Then the $(t+1)$-tuple $\psi = (\psi_0,\psi_1,\ldots, \psi_t)$ is an orthogonal decomposition of $\CC^n$ into subbundles.
If all subbundles are non-zero, $\psi$ defines a smooth map from $M$ to a flag manifold, i.e., $\psi$ is a moving flag as defined above; we continue to call it a moving flag even if some subbundles are zero.   We extend the map $\pi_e$ defined by \eqref{twistor-proj} to the space of such moving flags, so that we may continue to write $\pi_e \circ \psi = \sum_j \psi_{2j}$.

Now let $\varphi:M \to G_*(\cn^n)$ be a smooth map to a Grassmannian.
Say that a filtration \eqref{Az-filt} is \emph{alternating for $\varphi$} if
its legs $\psi_i = Z_i \ominus Z_{i+1}$ satisfy
\begin{equation}\label{alternate}
\psi_i \subset (-1)^i \varphi \quad 
\quad \text{for } i = 0,1,\ldots, t
\end{equation}
(where, as usual, $-\varphi$ means $\varphi^{\perp}$).
This is equivalent to $\varphi = \sum_j \psi_{2j}$, i.e., $\pi_e \circ \psi =  \varphi$.
We define \emph{alternating for $\varphi^{\perp}$} similarly.
By `alternating' we shall mean `alternating for $\varphi$ or $\varphi^{\perp}$'.

\begin{lemma} \label{le:J2-Az-equiv}
Let $\varphi:M \to G_*(\cn^n)$ be a smooth map. Then
equation \eqref{alt-Z} defines a one-to-one correspondence between
moving flags $\psi = (\psi_0,\psi_1,\ldots)$ with $\varphi = \pi_e \circ \psi$
(i.e. $\varphi = \sum_i \psi_{2i}$) and which satisfy the $J_2$-holomorphicity condition \eqref{J2-cond} 
and $A^{\varphi}_z$-filtrations $(Z_i)$ of\/ $\CC^n$ which are alternating for $\varphi$. 
\end{lemma}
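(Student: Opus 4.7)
The plan is to check, separately, that each of the two conditions in Definition~\ref{def:Az-filt} translates, under the bijection \eqref{alt-Z}, into exactly one of the two clauses of the $J_2$-holomorphicity condition~\eqref{J2-cond}. The correspondence between length-$t$ filtrations $\CC^n=Z_0\supset\cdots\supset Z_{t+1}=\ul 0$ and ordered orthogonal decompositions $\CC^n=\psi_0\oplus\cdots\oplus\psi_t$ (allowing zero summands) is tautological, and under it the alternating-for-$\varphi$ condition~\eqref{alternate} is exactly $\pi_e\circ\psi=\varphi$; so the content of the lemma lies in the two translations of (i) and (ii).

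For condition~(ii) I would use that, on an alternating decomposition, $A_z^\varphi=-(A'_\varphi\oplus A'_{\varphi^\perp})$ acts legwise: if $v\in\Gamma(\psi_j)$, then since $\psi_j\subset(-1)^j\varphi$, the vector $A_z^\varphi v$ lies in $(-1)^{j+1}\varphi=\bigoplus_{l\text{ of opposite parity to }j}\psi_l$, with $\psi_l$-component equal to $-A'_{\psi_j,\psi_l}(v)$. Applied to $Z_i=\bigoplus_{k\geq i}\psi_k$, condition~(ii) demands that this component vanish for every $l\leq i\leq j$; letting $i$ range yields $A'_{\psi_j,\psi_l}=0$ for all $l<j$ of opposite parity, i.e.\ $j-l>0$ and odd. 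After swapping the names of the indices, this is precisely the first clause of~\eqref{J2-cond}.

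For condition~(i) I would run the analogous decomposition for $D^\varphi_{\bar z}=\nabla''_\varphi\oplus\nabla''_{\varphi^\perp}$: for $v\in\Gamma(\psi_j)$, the image $D^\varphi_{\bar z}v$ lies in $(-1)^j\varphi$, so its $\psi_l$-component (for $l\neq j$) is $A''_{\psi_j,\psi_l}(v)$, automatically zero unless $l$ has the same parity as $j$. Condition~(i), i.e.\ $D^\varphi_{\bar z}(Z_i)\subset Z_i$, therefore becomes $A''_{\psi_j,\psi_l}=0$ whenever $l<j$ and $j-l$ is even. To match the $A'$-form in which \eqref{J2-cond} is stated I would then invoke the adjointness identity $(A'_{\psi_l,\psi_j})^*=-A''_{\psi_j,\psi_l}$, which follows in one line from $\psi_l\perp\psi_j$ and the Leibniz rule; this converts condition~(i) to $A'_{\psi_l,\psi_j}=0$ for $l<j$ with $j-l$ even, i.e.\ the second clause of~\eqref{J2-cond}. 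Combining the two translations gives the claimed bijection.

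The main obstacle, such as it is, is purely bookkeeping: tracking parities, keeping the strict inequality $l<j$ in the right direction, and remembering that the $A'/A''$ duality is needed to re-express (i) in the $A'$-form used in \eqref{J2-cond}. I expect no genuine analytic or geometric difficulty.
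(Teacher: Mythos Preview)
Your argument is correct and follows essentially the same approach as the paper: both proofs translate conditions~(i) and~(ii) of Definition~\ref{def:Az-filt} into the two clauses of~\eqref{J2-cond} by decomposing $A^{\varphi}_z$ and $D^{\varphi}_{\bar z}$ along the orthogonal splitting $\CC^n=\bigoplus_i\psi_i$. The only organisational difference is that the paper packages the even and odd legs into auxiliary subbundles $U_i=\sum_{j\geq i}\psi_{2j}$ and $V_i=\sum_{j\geq i}\psi_{2j+1}$ and records the intermediate equivalence~\eqref{U-V-cond}, whereas you work leg-by-leg and invoke the adjointness $(A'_{\psi_l,\psi_j})^*=-A''_{\psi_j,\psi_l}$ explicitly; the content is the same.
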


\begin{proof} 
Set $U_i = \sum_{j \geq i} \psi_{2j} \subset \varphi$ and
$V_i = \sum_{j \geq i} \psi_{2j+1} \subset \varphi^{\perp}$,
so that $Z_{2i} = U_i \oplus V_i$ and $Z_{2i+1} = U_{i+1} \oplus V_i$. 
It is easy to see that \eqref{J2-cond} is equivalent to
\be{U-V-cond}
\left\{
\begin{array}{rl}
\text{(i)} & \text{$U_i$ and $V_i$ are holomorphic subbundles of $\varphi$
and $\varphi^{\perp}$, respectively, and}\\
\text{(ii)} & \text{$A'_{\varphi}(U_i) \subset V_i$ and
$A'_{\varphi^{\perp}}(V_i) \subset U_{i+1}$}\,;
\end{array}
\right.
\ee
here, condition (i) means that $\Gamma(U_i)$ is closed under $\nabla''_{\varphi}$ and $\Gamma(V_i)$ is closed under $\nabla''_{\varphi^{\perp}}$.
The result follows by noting that conditions \eqref{U-V-cond} (i) and (ii) are equivalent to conditions (i) and (ii) of Definition \ref{def:Az-filt}, respectively.
\end{proof}
 
Call a filtration \eqref{Az-filt} \emph{strict} if all the inclusions $Z_{i+1} \subset Z_i$ are strict; then we have the following result, illustrated by the diagrams \eqref{diag:alt}; the length of the filtration $(Z_i)$ is equal to $4$ in the left-hand diagram and $5$ in the right-hand one.
In the diagrams, the vertices in the left (resp.\ right) columns represent the subbundles
making up $\varphi$ (resp.\ $\varphi^{\perp}$).
As in \cite{burstall-wood},  the possible non-zero ($\pa'$-)second fundamental forms $A'_{\psi_i,\,\psi_j}$ are indicated by the arrows: more precisely, the absence of an arrow from $\psi_i$ to $\psi_j$ indicates that  $A'_{\psi_i,\,\psi_j} = 0$.
Note that the arrows indicating the only possible non-zero second fundamental forms $A'_{\psi_i,\,\psi_j}$ are (i) vertically upwards when $\psi_i$ and $\psi_j$ are both in $\varphi$, or both in $\varphi^{\perp}$;
(ii) downwards from left to right when $\psi_i \subset \varphi$, $\psi_j \subset \varphi^{\perp}$, in which case they are the
 components of $A'_{\varphi}$; (iii) downwards from right to left when $\psi_i \subset \varphi^{\perp}$, $\psi_j \subset \varphi$, in which case they are the components of $A'_{\varphi^{\perp}}$.
{}From Lemma \ref{le:J2-Az-equiv} we deduce the following.
  
\begin{proposition} \label{pr:J2-Az-equiv}
Let $\varphi:M \to G_*(\cn^n)$ be a smooth map. Then
formulae \eqref{alt-Z} define a one-to-one correspondence between
{\rm (i)} $J_2$-holomorphic lifts $(\psi_0,\psi_1,\ldots):M \to F$ of\/ $\varphi$ to a complex flag manifold (with all $\psi_i$ non-zero)
and {\rm (ii)} strict $A^{\varphi}_z$-filtrations $(Z_i)$ which are alternating for $\varphi$.

Further, $F = F_{d_0,\ldots, d_{t+1}}$ where $(Z_i)$ has length $t$, and
$d_i = \rank \psi_i$ \ $(i = 0,1,\ldots, t+1)$.
\qed \end{proposition}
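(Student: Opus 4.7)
The plan is to view this proposition as a straightforward repackaging of Lemma~\ref{le:J2-Az-equiv}, upgraded so that the moving flag becomes a genuine map to a classical flag manifold $F_{d_0,\ldots,d_{t+1}}$. Lemma~\ref{le:J2-Az-equiv} has already done the main work: via \eqref{alt-Z} it gives a bijection between moving flags satisfying the algebraic condition \eqref{J2-cond} and $A^{\varphi}_z$-filtrations alternating for $\varphi$. What remains is to identify, on the one side, $J_2$-holomorphicity of the flag-manifold-valued map with \eqref{J2-cond}, and, on the other side, non-vanishing of the legs with strictness of the filtration.

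I would proceed in three small steps. First, note that an orthogonal decomposition $\psi = (\psi_0,\psi_1,\ldots,\psi_{t+1})$ of $\CC^n$ into subbundles of \emph{constant positive} ranks $d_i = \rank \psi_i$ is precisely the data of a smooth map $\psi:M \to F_{d_0,\ldots,d_{t+1}}$; conversely every smooth map to a complex flag manifold arises in this way. By Proposition~\ref{pr:J1-J2}(ii), such a map is $J_2$-holomorphic if and only if condition \eqref{J2-cond} is satisfied. Secondly, by the very definition \eqref{twistor-proj} of $\pi_e$, we have $\pi_e\circ\psi = \sum_{j}\psi_{2j}$, so $\psi$ is a lift of $\varphi$ (i.e.\ $\pi_e\circ\psi=\varphi$) if and only if the alternating condition \eqref{alternate} holds, which by \eqref{alt-Z} says exactly that $(Z_i)$ is alternating for $\varphi$. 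Thirdly, from $\psi_i = Z_i\ominus Z_{i+1}$ we see that $\psi_i$ has positive rank if and only if the inclusion $Z_{i+1}\subset Z_i$ is strict, so ``all $\psi_i$ non-zero'' matches precisely with strictness of the filtration, and in this case $d_i = \rank Z_i - \rank Z_{i+1}$, which yields the stated flag type $F_{d_0,\ldots,d_{t+1}}$.

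Combining these three observations with Lemma~\ref{le:J2-Az-equiv} gives the required bijection: on the moving-flag side we restrict to those flags with all legs non-zero (so that $\psi$ genuinely lands in some $F_{d_0,\ldots,d_{t+1}}$) and alternating for $\varphi$; on the filtration side we restrict to strict filtrations alternating for $\varphi$. The map in both directions is given by \eqref{alt-Z}, and the correspondence of $J_2$-holomorphicity with Definition~\ref{def:Az-filt}(i)--(ii) is the content of the lemma.

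I do not expect any substantive obstacle here; the content is essentially bookkeeping. The only minor point worth being explicit about is that constancy of the ranks $d_i$ is automatic once each $\psi_i$ is non-zero, because the $Z_i$ are already assumed to be subbundles (hence of locally, and by connectedness of $M$ globally, constant rank) in Definition~\ref{def:Az-filt}, so $\psi_i = Z_i\ominus Z_{i+1}$ inherits constant rank as soon as it is nowhere zero.
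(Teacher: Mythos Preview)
Your proposal is correct and mirrors the paper's own treatment: the paper simply states that Proposition~\ref{pr:J2-Az-equiv} is deduced from Lemma~\ref{le:J2-Az-equiv} and marks it with \qed, giving no further argument. Your write-up is just a careful unpacking of that deduction, invoking Proposition~\ref{pr:J1-J2}(ii) to match the $J_2$-holomorphicity condition with \eqref{J2-cond}, and observing that strictness of the filtration is equivalent to all legs being non-zero; the remark about constancy of the ranks $d_i$ is a welcome clarification that the paper leaves implicit.
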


\vspace{-4ex}

\begin{equation}
\begin{gathered}\label{diag:alt}
\xymatrixrowsep{0.5pc}\xymatrixcolsep{3pc}\xymatrix{ 
\psi_0 \ar[rd] \ar[rddd]  & \\
&  \psi_1 \ar[ld] \ar[lddd] \\
\psi_2 \ar[rd] \ar[uu] &\\
& \psi_3 \ar[ld] \ar[uu] \\
\psi_{4} \ar[uu] \ar@/^1.2pc/[uuuu]& }
\qquad \qquad \qquad\qquad
\xymatrixrowsep{0.5pc}\xymatrixcolsep{3pc}\xymatrix{ 
\psi_0 \ar[rd] \ar[rddd] \ar[rddddd]  & \\
&  \psi_1 \ar[ld] \ar[lddd] \\
\psi_2 \ar[rd] \ar[rddd] \ar[uu] &\\
& \psi_3 \ar[ld] \ar[uu] \\
\psi_{4} \ar[rd] \ar[uu] \ar@/^1.2pc/[uuuu]& \\
& \psi_5  \ar[uu] \ar@/_1.2pc/[uuuu] }
\end{gathered}
\end{equation}

\begin{remark} (i) The sets (i) and (ii) are empty unless $\varphi$ is harmonic and nilconformal.

(ii) Let $(Z_i)$ be a strict $A^{\varphi}_z$-sequence for a nilconformal harmonic map $M \to \U{n}$.  Then its legs $\psi_i = Z_i \ominus Z_{i+1}$ define a moving flag and $\wt\varphi = \sum_j \psi_{2j}:M \to G_*(\cn^n)$ defines a map from $M$ to a Grassmannian.
It would be interesting to study this map; however, in general, $(Z_i)$ is not an
$A^{\wt\varphi}_z$-sequence, so $(\psi_i)$ is not a $J_2$-holomorphic lift and $\wt\varphi$ is not harmonic.
\end{remark}

In practice, many $A^{\varphi}_z$-filtrations $(Z_i)$ are not alternating but satisfy the following weaker condition.   Let $\varphi:M \to G_*(\cn^n)$ be a smooth map.
Say that the subbundle $Z_i$ of $\CC^n$ \emph{splits (for $\varphi$)} if it is a direct sum:
\be{U-V}
Z_i = U_i \oplus V_i \quad \text{where}
\quad U_i \subset \varphi \quad \text{and} \quad
V_i \subset \varphi^{\perp}.
\ee
Say that \emph{the filtration $(Z_i)$ splits} if each subbundle $Z_i$ splits, 
equivalently, each $\psi_i = Z_i \ominus Z_{i+1}$ splits:
$\psi_i = \psi_i \cap {\varphi} \oplus \psi_i \cap \varphi^{\perp}$.
It is convenient to write
\be{Bi}
\wh\psi_{2i} = U_i \ominus U_{i+1} = \psi_i \cap \varphi \quad \text{ and } \quad
\wh\psi_{2i+1} = V_i \ominus V_{i+1} = \psi_i \cap \varphi^{\perp};
\ee
the resulting subbundles $\wh{\psi_i}$ and possible second fundamental forms
 $A'_{\wh\psi_i,\,\wh\psi_j}$ are shown for filtrations of lengths $1$ and $2$ in the diagrams \eqref{diag:splits}.  Note that some of
the $\wh{\psi_i}$ may be zero.
\begin{equation}
\begin{gathered}\label{diag:splits}
\xymatrixrowsep{0.6pc}\xymatrix{ \\
\wh\psi_0  \ar[rdd] &\wh\psi_1 \ar[ldd] \\ \\
\wh\psi_2   \ar[uu] & \wh\psi_3 \ar[uu]}
\qquad \qquad
\xymatrixrowsep{0.6pc}\xymatrixcolsep{3pc}\xymatrix{
& \wh\psi_0  \ar[rdddd] \ar[rdd] & \wh\psi_1 \ar[ldd] \ar[ldddd] \\ \\
& \wh\psi_2 \ar[uu] \ar[rdd] & \wh\psi_3 \ar[uu] \ar[ldd] \\ \\
& \wh\psi_4 \ar[uu] \ar@/^1.2pc/[uuuu] & \wh\psi_5 \ar[uu] \ar@/_1.2pc/[uuuu]}
\end{gathered}
\end{equation}

Note that, since $A_z^{\varphi}$ maps $Z_i$ into $Z_{i+1}$, it has no components between $\wh\psi_{2i}$ and $\wh\psi_{2i+1}$, accounting
for the lack of horizontal arrows in these diagrams. 

We now give four ways to convert an $A_z^{\varphi}$-filtration which splits into one which is alternating. 

\begin{lemma} \label{le:combine}
Let $\varphi:M \to G_*(\cn^n)$ be a smooth map.
Let $(Z_i)$ be an $A_z^{\varphi}$-filtration which splits; denote its length by $t$.
Set $\psi_i = Z_i \ominus Z_{i+1}$ and
define $U_i$, $V_i$ and $\wh\psi_i$ by \eqref{U-V} and \eqref{Bi}.

{\rm (i)} Set $\wt Z_{2j} = U_j \oplus V_j$ and
$\wt Z_{2j+1} = U_{j+1} \oplus V_j$. 
 Then $(\wt Z_i)$ is an alternating $A_z^{\varphi}$-filtration of length
$2t+1$ with legs
\be{split1}
(\wt\psi_0, \wt\psi_1, \ldots, \wt\psi_{2t}, \wt\psi_{2t+1}) = 
(\wh\psi_0, \wh\psi_1, \ldots, \wh\psi_{2t}, \wh\psi_{2t+1}).
\ee

{\rm (ii)}  Reversing the roles of\/ $\varphi$ and $\varphi^{\perp}$, set $\wt Z_{2j} = U_j \oplus V_j$ and
$\wt Z_{2j+1} = U_j \oplus V_{j+1}$. 
 Then $(\wt Z_i)$ is an alternating $A_z^{\varphi}$-filtration of length
$2t+1$ with legs
\be{split2}
(\wt\psi_0, \wt\psi_1, \ldots, \wt\psi_{2t}, \wt\psi_{2t+1})
= (\wh\psi_1, \wh\psi_0, \ldots, \wh\psi_{2t+1}, \wh\psi_{2t}).
\ee

{\rm (iii)} Set
$
\wt Z_{2j} = U_{2j-1} \oplus V_{2j}$  where  $U_{-1} = \varphi\,,$
 and  $\wt Z_{2j+1} = U_{2j+1} \oplus V_{2j}$\,.
Then $(\wt Z_i)$ is an alternating $A_z^{\varphi}$-filtration of length $t+1$.
Its legs are given by
\be{split3}
(\wt\psi_0, \wt\psi_1, \ldots, \wt\psi_{t+1}) = 
\begin{cases}
(\wh\psi_0, \wh\psi_1+\wh\psi_3, \wh\psi_2+\wh\psi_4, \wh\psi_5+\wh\psi_7, \ldots,
\wh\psi_{2t-2}+\wh\psi_{2t}, \wh\psi_{2t+1}) & \ ($t$ \ even), \\  
(\wh\psi_0, \wh\psi_1+\wh\psi_3, \wh\psi_2+\wh\psi_4, \wh\psi_5+\wh\psi_7, \ldots,
\wh\psi_{2t-1}+\wh\psi_{2t+1}, \wh\psi_{2t})  & \ ($t$ \ odd).
\end{cases}
\ee

{\rm (iv)} Reversing the roles of\/ $\varphi$ and $\varphi^{\perp}$, set
$$
\wt Z_{2j} = U_{2j} \oplus V_{2j-1}\,, \quad \text{and} \quad
\wt Z_{2j+1} = U_{2j} \oplus V_{2j+1} \text{ where } V_{-1} = \varphi^{\perp}.
$$
Then $(\wt Z_i)$ is an alternating $A_z^{\varphi}$-filtration of length $t+1$.
Its legs are given by
\be{split4}
(\wt\psi_0, \wt\psi_1, \ldots, \wt\psi_{t+1}) = 
\begin{cases} 
(\wh\psi_1, \wh\psi_0+ \wh\psi_2, \wh\psi_3+\wh\psi_5, \wh\psi_4+\wh\psi_6, \ldots
\wh\psi_{2t-1}+\wh\psi_{2t+1}, \wh\psi_{2t}) & \ ($t$ \ even), \\
(\wh\psi_1, \wh\psi_0+ \wh\psi_2, \wh\psi_3+\wh\psi_5, \wh\psi_4+\wh\psi_6, \ldots
\wh\psi_{2t-2}+\wh\psi_{2t}, \wh\psi_{2t+1}) & \ ($t$ \ odd).
\end{cases}
\ee

Further, each of the above four moving flags \eqref{split1}--\eqref{split4} satisfies the $J_2$-holomorphicity condition \eqref{J2-cond}.
\end{lemma}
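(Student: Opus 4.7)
The plan is to reduce the four parallel constructions to a single unified observation and then verify each case as essentially a bookkeeping exercise. Recall from Section~\ref{subsec:Grass} that once $\varphi$ is identified with its Cartan embedding, the connection $D^{\varphi}_{\zbar}$ splits as $\nabla''_{\varphi} \oplus \nabla''_{\varphi^{\perp}}$ while $A^{\varphi}_z$, being minus the sum of $A'_{\varphi}$ and $A'_{\varphi^{\perp}}$, interchanges $\varphi$ and $\varphi^{\perp}$. Consequently, the hypothesis that each $Z_i = U_i \oplus V_i$ is a holomorphic $A^{\varphi}_z$-invariant subbundle is equivalent to the conjunction of: (a) each $U_i$ is $\nabla''_{\varphi}$-closed; (b) each $V_i$ is $\nabla''_{\varphi^{\perp}}$-closed; (c) $A^{\varphi}_z(U_i) \subset V_{i+1}$; (d) $A^{\varphi}_z(V_i) \subset U_{i+1}$. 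The equivalence follows immediately by splitting sections and derivatives along the orthogonal decomposition $\varphi \oplus \varphi^{\perp}$ and using that $Z_i \cap \varphi = U_i$, $Z_i \cap \varphi^{\perp} = V_i$. This is the first step.

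With (a)--(d) in hand, every $\wt Z_i$ in each of the four constructions is by definition a sum $U_a \oplus V_b$, hence automatically $D^{\varphi}_{\zbar}$-closed by (a) and (b). The descending inclusions $\wt Z_i \supset \wt Z_{i+1}$ are immediate from the monotonicity $U_i \supset U_{i+1}$, $V_i \supset V_{i+1}$. The $A^{\varphi}_z$-invariance $A^{\varphi}_z(\wt Z_i) \subset \wt Z_{i+1}$ reduces to an index chase using (c) and (d); in case~(i), for example, $A^{\varphi}_z(\wt Z_{2j}) \subset V_{j+1} + U_{j+1} = Z_{j+1} \subset \wt Z_{2j+1}$ and $A^{\varphi}_z(\wt Z_{2j+1}) \subset V_{j+2} + U_{j+1} \subset \wt Z_{2j+2}$. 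Case~(ii) is obtained by interchanging the roles of $U$ and $V$, and cases~(iii) and~(iv) are analogous once one adopts the boundary conventions $U_{-1} = \varphi$ and $V_{-1} = \varphi^{\perp}$ needed to guarantee $\wt Z_0 = \CC^n$.

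The identification of the legs is then a direct cancellation. In case~(i), $\wt\psi_{2j} = (U_j \oplus V_j) \ominus (U_{j+1} \oplus V_j) = U_j \ominus U_{j+1} = \wh\psi_{2j} \subset \varphi$, and symmetrically $\wt\psi_{2j+1} = V_j \ominus V_{j+1} = \wh\psi_{2j+1} \subset \varphi^{\perp}$; this simultaneously yields the alternating property and formula \eqref{split1}. Case~(ii) is the same calculation with $U$ and $V$ swapped. In cases~(iii) and~(iv), the analogous computation produces $\wt\psi_{2j} = U_{2j-1} \ominus U_{2j+1} = \wh\psi_{4j-2} + \wh\psi_{4j}$ (respectively for the odd index $V_{2j} \ominus V_{2j+2} = \wh\psi_{4j+1} + \wh\psi_{4j+3}$), giving legs that are sums of two consecutive $\wh\psi$'s of the same parity. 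The termination of the filtration — governed by whether the last non-trivial $U_i$ or $V_i$ carries an even or odd index, i.e.\ by the parity of $t$ — forces the asymmetry between the two cases in the formulas \eqref{split3}--\eqref{split4}. I expect this endpoint bookkeeping, together with the initial step $\wt Z_0 = \CC^n$ via the conventions $U_{-1}=\varphi$, $V_{-1}=\varphi^{\perp}$, to be the only mildly delicate point.

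Finally, the closing assertion that each of the four moving flags is $J_2$-holomorphic requires no new work: each $(\wt Z_i)$ has just been shown to be an alternating $A^{\varphi}_z$-filtration, so Lemma~\ref{le:J2-Az-equiv} applies to give the $J_2$-holomorphicity of the corresponding leg sequence.
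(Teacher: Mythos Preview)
Your proof is correct and follows the same route as the paper's: verify that each $(\wt Z_i)$ is an alternating $A^{\varphi}_z$-filtration and then invoke Lemma~\ref{le:J2-Az-equiv}. The paper compresses the verification to ``easily checked'', whereas you spell out the equivalence (a)--(d) for the components $U_i, V_i$ and carry through the index chase; this is exactly the content the paper leaves implicit, and your endpoint remarks about $U_{-1}=\varphi$, $V_{-1}=\varphi^{\perp}$ and the parity of $t$ are the right places to be careful.
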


\begin{proof}
This follows from Lemma \ref{le:J2-Az-equiv}: that the four moving flags satisfy the hypotheses of that lemma is easily checked.
\end{proof}

Parts (iii) and (iv) are illustrated for $t=4$ by the left- and right-hand diagrams of
\eqref{diag:split-alt}, respectively.  For clarity, the second fundamental forms $A'_{\wh\psi_i,\wh\psi_j}$
between subbundles $\wh\psi_i$, $\wh\psi_j$ which are both in $\varphi$ or both in $\varphi^{\perp}$ are not shown,
and only the arrows of least gradient between
subbundles $\wh\psi_i$, $\wh\psi_j$\,, with one in $\varphi$ and the other in $\varphi^{\perp}$, are shown.

\smallskip

\begin{equation}
\begin{gathered}\label{diag:split-alt}
\xymatrixrowsep{1.5pc}\xymatrixcolsep{3.2pc}\xymatrix{
\wt\psi_0=\wh\psi_0 \ar[rd]   & \wh\psi_1 \ar[ld] \\
\wh\psi_2 \ar[rd] & \wh\psi_3 \ar[ld] \ar@{-}[u]_{\wt\psi_1} \\
\wh\psi_4 \ar[rd] \ar@{-}[u]^{\wt\psi_2}  & \wh\psi_5 \ar[ld] \\
\wh\psi_6 \ar[rd]  & \wh\psi_7 \ar[ld] \ar@{-}[u]_{\wt\psi_3} \\
\wh\psi_8 \ar@{-}[u]^{\wt\psi_4} &  \wh\psi_9=\wt\psi_5  }
\qquad\qquad\qquad
\xymatrixrowsep{1.5pc}\xymatrixcolsep{3.2pc}\xymatrix{
\wh\psi_0 \ar[rd]   &  \wh\psi_1=\wt\psi_0 \ar[ld] \\
\wh\psi_2 \ar[rd] \ar@{-}[u]^{\wt\psi_1} & \wh\psi_3 \ar[ld]  \\
\wh\psi_4 \ar[rd]   & \wh\psi_5 \ar[ld] \ar@{-}[u]_{\wt\psi_2} \\
\wh\psi_6 \ar[rd] \ar@{-}[u]^{\wt\psi_3} & \wh\psi_7 \ar[ld]  \\
\wt\psi_5=\wh\psi_8  & \wh\psi_9 \ar@{-}[u]_{\wt\psi_4} }
\end{gathered}
\end{equation}

\medskip

By Proposition \ref{pr:J2-Az-equiv}, the four moving flags \eqref{split1}--\eqref{split4} define $J_2$-holomorphic lifts of $\varphi$ or $\varphi^{\perp}$ if all legs are non-zero. We now give a process for removing legs which are zero.  Recall that, for a moving flag
$\psi = (\psi_0,\psi_1, \ldots)$, its twistor projection is given by
$\pi_e\circ\psi = \sum_j \psi_{2j}$.   The following is easily checked.

\begin{lemma} \label{le:remove-zero}
Let $\psi = (\psi_0, \psi_1, \ldots, \psi_t)$ be a moving flag with some legs equal to zero.  Each of the following three operations
gives a moving flag
$\wt\psi = (\wt\psi_0, \wt\psi_1, \ldots, \wt\psi_s)$ with $s < t$, fewer zero legs,
and $\pi_e \circ \wt\psi = \pm\,\pi_e\circ\psi$;
  further, if\/ $\psi$ satisfies the $J_2$-holomorphicity condition \eqref{J2-cond}, then
so does $\wt\psi$.
  
\emph{Operation 1}.  If the first leg $\psi_0$ is zero, \emph{remove it and renumber}$:$
$\wt\psi_j = \psi_{j+1}$, thus reducing the number of legs by one.

\emph{Operation 2}. If the last leg $\psi_t$ is zero, \emph{remove it}, thus reducing the number of legs by one.

\emph{Operation 3}. If any other leg $\psi_i$ is zero, \emph{remove it and combine the legs
on each side}, giving a new lift with two fewer legs$:$
$$
\wt\psi_j = \psi_j \ (j < i-1)\,, \quad 
\wt\psi_j = \psi_{i-1} + \psi_{i+1} \ (j=i-1) \,, \quad
\wt\psi_j = \psi_{j+2}\ (j > i-1)\,.
$$
Note that, after Operation 1,  $\pi_e\circ \wt\psi = -\pi_e \circ \psi$;
after Operations 2 or 3, $\pi_e\circ \wt\psi = \pi_e \circ \psi$.
\qed \end{lemma}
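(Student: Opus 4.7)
The plan is to verify, for each of the three operations, two claims: that the stated sign relates $\pi_e \circ \wt\psi$ to $\pi_e \circ \psi$, and that the $J_2$-holomorphicity condition \eqref{J2-cond} passes from $\psi$ to $\wt\psi$. Both reduce to index bookkeeping; the only ingredients are the formula $\pi_e \circ \psi = \sum_j \psi_{2j}$ and the Cartan identification \eqref{cartan}, under which $V^\perp$ corresponds to $-V$.

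For the twistor projections, Operation~2 is immediate, since removing a zero leg $\psi_t$ does not alter $\sum_j \psi_{2j}$, whether $t$ is even or odd. In Operation~3 the removed leg $\psi_i = 0$ while the two merged legs $\psi_{i-1}$, $\psi_{i+1}$ share the parity opposite to that of $i$; under the re-indexing, the merged leg $\wt\psi_{i-1}$ and every shifted leg $\wt\psi_j = \psi_{j+2}$ (for $j\geq i$) sits in a position of the same parity as its original, so $\sum_j \wt\psi_{2j} = \sum_j \psi_{2j}$. For Operation~1 the shift $\wt\psi_j = \psi_{j+1}$ gives $\pi_e \circ \wt\psi = \sum_j \psi_{2j+1}$; vanishing of $\psi_0$ implies that the surviving legs still orthogonally decompose $\cn^n$, so $\sum_j \psi_{2j+1} = \bigl(\sum_j \psi_{2j}\bigr)^\perp$, which under the Cartan embedding is exactly $-\pi_e \circ \psi$.

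For $J_2$-holomorphicity, Operations~1 and~2 leave every pairwise difference $a - b$ between surviving indices unchanged, so \eqref{J2-cond} carries over directly. For Operation~3, the re-indexing by $-2$ above position $i$ preserves parities, so all conditions between unmerged legs are unaffected. The only new cases involve the merged leg $\wt\psi_{i-1} = \psi_{i-1} + \psi_{i+1}$: a typical morphism $A'_{\wt\psi_{i-1}, \wt\psi_b}$ with $b \geq i$ splits as $A'_{\psi_{i-1}, \psi_{b+2}} + A'_{\psi_{i+1}, \psi_{b+2}}$, whose two original index differences $(b+2)-(i-1)$ and $(b+2)-(i+1)$ differ by $2$, hence share the parity of the new difference $b - (i-1)$. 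Both summands therefore vanish by \eqref{J2-cond} for $\psi$, giving the required vanishing for $\wt\psi$. The symmetric cases, with the merged leg on the right or with $b < i-1$, are identical.

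The only subtlety worth flagging is the sign in Operation~1: it is not a statement about the moving flag itself but a consequence of the Cartan identification $\iota(V^\perp) = -\iota(V)$. Once this is acknowledged, the whole lemma is a matter of tracking parities of index differences, which explains the author's remark that it is easily checked.
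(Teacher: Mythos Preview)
Your proof is correct and follows exactly the approach the paper intends: the authors give no argument beyond ``easily checked,'' and your direct verification of the parity bookkeeping for both the twistor projection and condition \eqref{J2-cond} is precisely what that remark invites. The only point you leave implicit is that the sign checks in Operation~3 also work out (the shifted differences not only share parity but also retain the correct sign), but this is immediate from $b \geq i$ or $b < i-1$ in each case.
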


By iterating these operations, we obtain the following result.

\begin{proposition}  \label{pr:remove-zero-legs}
Let $\psi = (\psi_0,\psi_1,\ldots, \psi_t)$ be a moving flag which satisfies \eqref{J2-cond}.
Set $\varphi = \pi_e\circ\psi$.
Then we can remove and combine legs by the operations in Lemma \ref{le:remove-zero} to obtain a moving flag 
$\wt\psi = (\wt\psi_0, \wt\psi_1,\ldots, \wt\psi_s)$
with $s \leq t$ and no $\wt\psi_i$ equal to zero, which satisfies \eqref{J2-cond}, and has
$\pi_e\circ \wt\psi = \pm\varphi;$ thus $\wt\psi: M \to F$ is 
a $J_2$-holomorphic lift of\/ $\pm\varphi$.
\qed \end{proposition}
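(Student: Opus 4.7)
\emph{Plan of proof.} The argument is essentially a finite induction that iterates Lemma \ref{le:remove-zero}. I would proceed by induction on the number $N(\psi)$ of zero legs among $(\psi_0,\psi_1,\ldots,\psi_t)$. If $N(\psi) = 0$, take $\wt\psi = \psi$ and $s = t$. Otherwise, pick any leg $\psi_i$ that is zero and apply the appropriate operation from Lemma \ref{le:remove-zero}: Operation~1 if $i = 0$, Operation~2 if $i = t$, and Operation~3 if $0 < i < t$. In each case, Lemma \ref{le:remove-zero} produces a moving flag $\psi'$ with strictly fewer legs, still satisfying the $J_2$-holomorphicity condition \eqref{J2-cond}, and with $\pi_e \circ \psi' = \pm\,\pi_e \circ \psi$.

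The key observation for termination is that each operation strictly decreases the total number of legs (by one for Operations~1 and~2, by two for Operation~3) and does not create any new zero legs. Indeed, Operations~1 and~2 simply delete the chosen zero leg at the end of the flag. Operation~3 deletes the zero middle leg $\psi_i$ and combines its neighbours $\psi_{i-1}$ and $\psi_{i+1}$ into a single new leg $\psi_{i-1} + \psi_{i+1}$; all other legs remain untouched, and the combined leg is zero only if both neighbours were already zero. Hence $N(\psi') \le N(\psi) - 1$ in every case, so after at most $N(\psi)$ applications we reach a moving flag $\wt\psi = (\wt\psi_0,\ldots,\wt\psi_s)$ with $s \le t$, no zero legs, and satisfying \eqref{J2-cond}.

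For the sign, note that by Lemma \ref{le:remove-zero}, Operations~2 and~3 preserve $\pi_e\circ\psi$, while Operation~1 negates it. Hence, if Operation~1 has been used $k$ times during the iteration, $\pi_e\circ\wt\psi = (-1)^k \varphi = \pm\varphi$, as required. The final $\wt\psi$ then takes values in the flag manifold $F = F_{d_0,\ldots,d_s}$ with $d_i = \rank \wt\psi_i$, and is a $J_2$-holomorphic lift of $\pm\varphi$.

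The main content of the proof is already packaged in Lemma \ref{le:remove-zero}, so no genuine obstacle arises; the only things to be careful about are the bookkeeping of zero-leg counts (to ensure termination) and the tracking of the sign under Operation~1. Since any sequence of operations that always targets a zero leg must terminate at a flag with no zero legs, neither the choice of operation nor the order of removal presents a difficulty.
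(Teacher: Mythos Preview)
Your proof is correct and follows exactly the approach the paper intends: the proposition is stated with an immediate \qed, preceded by the sentence ``By iterating these operations, we obtain the following result,'' and you have simply spelled out that iteration carefully. Your bookkeeping on termination and on the sign under Operation~1 is accurate and fills in precisely the details the paper leaves implicit.
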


Even when the legs are non-zero, we can sometimes obtain lifts with fewer legs by a fourth operation as follows; for this, recall the definition \eqref{2nd-f-f} of the
$\pa'$-second fundamental forms $A'_{\psi_i,\psi_j}$.     Again, the proof is by direct checking.
   
\begin{lemma} \label{le:reduce-legs}
Let $\psi = (\psi_0,\psi_1,\ldots, \psi_t)$ be a moving flag which satisfies  \eqref{J2-cond},
and set $\varphi =  \pi_e\circ\psi$.

\emph{Operation 4.}
If $A'_{\psi_i,\psi_{i+1}}$ is zero, \emph{replace}
$\ldots, \psi_{i-1}, \psi_i, \psi_{i+1}, \psi_{i+2}, \ldots$ by
$\ldots,\psi_{i-1}+\psi_{i+1}, \psi_i+\psi_{i+2}, \ldots$.

This gives a new moving flag $\wt\psi = (\wt\psi_0,\wt\psi_1,\ldots, \wt\psi_s)$
with $s \leq t$ and $\pi_e \circ \wt\psi = \varphi$ and
satisfying \eqref{J2-cond}. 
$($Here, we set $\psi_i$ equal to zero if\/ $i$ is outside the range $0 \leq i \leq t$.$)$ 
\qed \end{lemma}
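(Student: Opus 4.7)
My plan is to pass to the equivalent filtration picture via Lemma \ref{le:J2-Az-equiv}. Setting $Z_j = \sum_{k \geq j} \psi_k$ and $\wt Z_j = \sum_{k \geq j} \wt\psi_k$, a direct expansion gives $\wt Z_j = Z_j$ for $j \leq i-1$, $\wt Z_j = Z_{j+2}$ for $j \geq i+1$, and the crucial middle term
\[
\wt Z_i = \psi_i \oplus Z_{i+2} = Z_i \ominus \psi_{i+1};
\]
so Operation 4, at the level of filtrations, simply excises the subbundle $\psi_{i+1}$ from $Z_i$. The equality $\pi_e \circ \wt\psi = \varphi$ and the alternating condition $\wt\psi_j \subset (-1)^j\varphi$ will both follow from parity bookkeeping: $\psi_{i-1}$ and $\psi_{i+1}$ share the parity of $i-1$, while $\psi_i$ and $\psi_{i+2}$ share the parity of $i$. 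By Lemma \ref{le:J2-Az-equiv} it then suffices to verify the two conditions of Definition \ref{def:Az-filt} for $(\wt Z_j)$, and only the middle level $\wt Z_i$ is not directly inherited.

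Condition (i) will come out for free from the orthogonal splitting $D^{\varphi} = \nabla_{\varphi} \oplus \nabla_{\varphi^{\perp}}$. Indeed, for $v \in \Gamma(\psi_i) \subset \Gamma((-1)^i\varphi)$ one has $D^{\varphi}_{\bar z} v \in \Gamma((-1)^i\varphi) \cap \Gamma(Z_i)$; since the excised subbundle $\psi_{i+1}$ lies in $(-1)^{i+1}\varphi$, this intersection equals $\wt Z_i \cap (-1)^i\varphi$ and is contained in $\wt Z_i$. Combined with the inherited holomorphicity of $Z_{i+2}$, this delivers (i).

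Condition (ii) is where the hypothesis $A'_{\psi_i, \psi_{i+1}} = 0$ will enter, and this is the one mild obstacle. At $j = i$ and $j = i-1$ everything should reduce to showing $A_z^\varphi(\psi_i) \subset \wt Z_i$: the contributions from $\psi_{i-1}$, $\psi_{i+1}$, and $Z_{i+2}$ are either inherited from the old filtration or killed because $A_z^\varphi$ interchanges $\varphi$ and $\varphi^\perp$ (so $A_z^\varphi\psi_{i-1}$ lies in the same summand as $\psi_i$, not that of $\psi_{i+1}$). Since $A_z^\varphi$ already sends $Z_i$ into $Z_{i+1} = \psi_{i+1} + Z_{i+2}$, the remaining task is to show $\pi_{\psi_{i+1}} \circ A_z^\varphi|_{\psi_i} = 0$. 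Using $A_z^{\varphi}|_{\varphi} = -A'_{\varphi}$ (and the analogue on $\varphi^\perp$) together with the identity $\pi_{\psi_{i+1}} \circ \pi_{\varphi^\perp} = \pi_{\psi_{i+1}}$ when $\psi_i \subset \varphi$, a one-line projection computation identifies this composition with $-A'_{\psi_i, \psi_{i+1}}$, which vanishes by hypothesis. No step is deep; the only care needed is to handle the parity case $\psi_i \subset \varphi^\perp$ symmetrically, and the whole verification is direct once the translation to filtrations has been made.
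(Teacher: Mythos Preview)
Your argument is correct and is precisely the kind of direct verification the paper has in mind (the paper gives no details beyond ``the proof is by direct checking''); routing through the filtration picture via Lemma~\ref{le:J2-Az-equiv} is entirely in the spirit of the surrounding section. One small slip to tidy: at level $j=i$ you need $A_z^{\varphi}(\psi_i)\subset \wt Z_{i+1}=Z_{i+3}$, not merely $\wt Z_i$; your computation gives $A_z^{\varphi}(\psi_i)\subset Z_{i+2}$, and the missing step---killing the $\psi_{i+2}$-component---follows from the same parity observation you already used for $\psi_{i-1}$, since $A_z^{\varphi}(\psi_i)\subset(-1)^{i+1}\varphi$ while $\psi_{i+2}\subset(-1)^i\varphi$.
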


Note that Operation 4
reduces the number of legs by two unless $i = 0$ or $i+1 = t$, in which case it reduces the number of legs by one.
By iterating this process, we can find a $J_2$-holomorphic map 
$(\psi_0,\psi_1,\ldots, \psi_t): M \to F$ satisfying
\be{norm-legs}
 A'_{\psi_i,\psi_{i+1}} \neq 0 \qquad (i = 0,1,\ldots, t-1). \hfill 
\ee

On putting the above results together, we obtain the main result of this section.

\begin{proposition} \label{pr:J2-from-splits}
Let $\varphi:M \to G_*(\cn^n)$ be a smooth map.
Let $(Z_i)$ be an $A^{\varphi}_z$-filtration which splits for $\varphi$. 
Set $\psi_i = Z_i \ominus Z_{i+1}$ and write
$\psi_i = \wh\psi_{2i} \oplus \wh\psi_{2i+1}$ where $\wh\psi_{2i} \subset \varphi$
and $\wh\psi_{2i+1} \subset \varphi^{\perp}$.
Then there is a $J_2$-holomorphic twistor lift
$\psi = (\psi_0,\psi_1,\ldots,\psi_t):M \to F$ of $\varphi$ or $\varphi^{\perp}$ to a flag manifold $F=F_{d_0,\ldots, d_t}$ satisfying \eqref{norm-legs} with every leg
$\psi_i$ the sum of some of the $\wh\psi_j$.
\end{proposition}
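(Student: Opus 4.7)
The plan is to pipeline the three technical results already established: Lemma \ref{le:combine} converts a splitting $A_z^{\varphi}$-filtration into an \emph{alternating} one, which by Lemma \ref{le:J2-Az-equiv} corresponds to a moving flag satisfying the $J_2$-holomorphicity condition \eqref{J2-cond}; Proposition \ref{pr:remove-zero-legs} then cleans up any vanishing legs so that the moving flag defines an honest map to a flag manifold; finally, Lemma \ref{le:reduce-legs} is iterated to secure the nondegeneracy condition \eqref{norm-legs}. The lift so produced will then cover $\varphi$ or $\varphi^{\perp}$ (the sign depending on which cleaning operations are applied), and the proof reduces to checking that each operation in this pipeline preserves the decomposition of each leg as a sum of some $\wh\psi_j$.

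Concretely, I would start by applying Lemma \ref{le:combine}(i) to $(Z_i)$. This yields an alternating $A_z^{\varphi}$-filtration $(\wt Z_i)$ of length $2t+1$ whose legs are exactly $(\wh\psi_0, \wh\psi_1, \ldots, \wh\psi_{2t+1})$ in order. By the last sentence of Lemma \ref{le:combine} (equivalently by Lemma \ref{le:J2-Az-equiv}) the corresponding moving flag satisfies \eqref{J2-cond} and has $\pi_e \circ \wt\psi = \varphi$. At this stage each leg is literally one of the $\wh\psi_j$, so the `sum of some $\wh\psi_j$' property holds trivially. One could equally well invoke part (ii), (iii), or (iv) of the lemma, but part (i) is the cleanest for the bookkeeping.

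Next, some of the $\wh\psi_j$ may be zero, so the flag is not yet a map to a flag manifold. Apply Proposition \ref{pr:remove-zero-legs}: each of Operations 1--3 either deletes a zero leg and renumbers, or replaces two consecutive non-zero legs surrounding a removed zero leg by their direct sum. All three operations clearly send `sum of some $\wh\psi_j$' to `sum of some $\wh\psi_j$', and preserve \eqref{J2-cond}; Operation~1 flips $\pi_e \circ \psi$ to $-\pi_e \circ \psi$, while Operations 2 and 3 leave the twistor projection unchanged, so the surviving moving flag covers $\pm\varphi$, i.e.\ $\varphi$ or $\varphi^{\perp}$. Then apply Operation 4 of Lemma \ref{le:reduce-legs} whenever some $A'_{\psi_i,\psi_{i+1}}$ vanishes; since each application strictly reduces the number of legs, the process terminates after finitely many steps, and again the replacements $\psi_{i-1}\mapsto \psi_{i-1}+\psi_{i+1}$ and $\psi_i\mapsto \psi_i+\psi_{i+2}$ preserve both the `sum of $\wh\psi_j$' form and \eqref{J2-cond}, and do not alter the twistor projection.

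The main obstacle (really the only non-routine point) is the bookkeeping needed to confirm that all four operations really do preserve the `sum of some $\wh\psi_j$' property simultaneously with \eqref{J2-cond} and the twistor projection identity $\pi_e\circ\psi=\pm\varphi$; this is by direct inspection of the formulae in Lemmas \ref{le:combine}, \ref{le:remove-zero} and \ref{le:reduce-legs}. After the pipeline terminates, every leg is non-zero and \eqref{norm-legs} holds, so by Proposition \ref{pr:J2-Az-equiv} together with Remark \ref{rem:aH-strs}(i) the resulting map $\psi:M\to F=F_{d_0,\ldots,d_t}$ is a $J_2$-holomorphic twistor lift of $\varphi$ or $\varphi^{\perp}$ with the required form of legs.
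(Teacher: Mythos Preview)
Your proposal is correct and follows essentially the same route as the paper's own proof: apply Lemma~\ref{le:combine}(i) to obtain the alternating moving flag $(\wh\psi_0,\wh\psi_1,\ldots,\wh\psi_{2t+1})$ satisfying \eqref{J2-cond}, then successively apply Operations 1--4 from Lemmas~\ref{le:remove-zero} and \ref{le:reduce-legs} to strip zero legs and enforce \eqref{norm-legs}. The paper compresses this into two sentences; your version spells out the bookkeeping (in particular the preservation of the ``sum of some $\wh\psi_j$'' property and of $\pi_e\circ\psi=\pm\varphi$ under each operation), which is exactly the content the paper leaves implicit.
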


\begin{proof}
As in Lemma \ref{le:combine}(i) the moving flag $(\wh\psi_i)$ satisfies the
$J_2$-holomorphicity condition.  By carrying out Operations 1--4 as above, this can
be modified to give a $J_2$-holomorphic lift with the stated properties.
\end{proof}

Now, for any nilconformal harmonic map $\varphi:M \to G_*(\cn^n)$, we can find $A^{\varphi}_z$-filtrations which split
 for $\varphi$, see the next examples; thus we obtain the following result, which also
follows from the work of Burstall \cite[Section 3]{burstall-grass}.

\begin{corollary} \label{co:converse}
A smooth map $\varphi:M \to G_*(\cn^n)$ from a surface to a complex Grassmannian has a
$J_2$-holomorphic twistor lift\/ $\psi:M \to F$ to a flag manifold
if and only if it is harmonic and nilconformal.
\end{corollary}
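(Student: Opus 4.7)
The plan is to deduce both implications from the correspondence in Proposition \ref{pr:J2-Az-equiv} together with Proposition \ref{pr:J2-from-splits}; the only new ingredient needed is a construction, for every harmonic nilconformal $\varphi$, of an $A^{\varphi}_z$-filtration which splits for $\varphi$.

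For necessity, suppose $\psi:M\to F$ is a $J_2$-holomorphic lift of $\varphi$. Since $\pi_e:(F,J_2)\to G_*(\cn^n)$ is a twistor fibration (Remark \ref{rem:aH-strs}(i)), the composite $\varphi=\pi_e\circ\psi$ is automatically harmonic. Proposition \ref{pr:J2-Az-equiv} converts $\psi$ into a strict alternating $A^{\varphi}_z$-filtration $\CC^n=Z_0\supset\cdots\supset Z_{t+1}=\underline{0}$, so condition (ii) of Definition \ref{def:Az-filt} yields $(A^{\varphi}_z)^{t+1}(\CC^n)\subset Z_{t+1}=\underline{0}$ and $\varphi$ is nilconformal; the case $\pi_e\circ\psi=\varphi^{\perp}$ is handled identically, using the remark after Definition \ref{def:Az-filt} that the two nilconformality notions agree.

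For sufficiency, assume $\varphi$ is harmonic and nilconformal, and set
\[
Z_i \;=\; \Ima (A^{\varphi}_z)^i \qquad (i=0,1,2,\ldots),
\]
interpreted via the filling-out-zeros procedure of \cite[Proposition 2.2]{burstall-wood} as a smooth subbundle. Harmonicity ensures that $A^{\varphi}_z$ is a holomorphic endomorphism of $(\CC^n,D^{\varphi}_{\bar z})$, so each $Z_i$ is a holomorphic subbundle; manifestly $A^{\varphi}_z(Z_i)\subset Z_{i+1}$, and nilpotency of $A^{\varphi}_z$ terminates the sequence at $\underline{0}$. From subsection \ref{subsec:Grass}, $A^{\varphi}_z=-(A'_{\varphi}\oplus A'_{\varphi^{\perp}})$ interchanges $\varphi$ and $\varphi^{\perp}$, hence $(A^{\varphi}_z)^i(\varphi)$ and $(A^{\varphi}_z)^i(\varphi^{\perp})$ each lie entirely in $\varphi$ or in $\varphi^{\perp}$ according to the parity of $i$, giving the orthogonal decomposition $Z_i=U_i\oplus V_i$ required in \eqref{U-V}. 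Proposition \ref{pr:J2-from-splits} applied to this split $A^{\varphi}_z$-filtration then produces a $J_2$-holomorphic lift of $\varphi$ or $\varphi^{\perp}$.

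The main subtlety is reconciling the conclusion of Proposition \ref{pr:J2-from-splits}, which is a lift of $\varphi$ or $\varphi^{\perp}$, with the corollary's assertion of a lift of $\varphi$ itself. I would resolve this by running the construction in parallel with $\varphi^{\perp}$ in place of $\varphi$ (both are simultaneously harmonic and nilconformal) and selecting whichever application yields the correct target; equivalently, one may prepend a zero leg to any $J_2$-holomorphic lift of $\varphi^{\perp}$ to obtain a lift whose twistor projection is $\varphi$, afterwards tidying the result by the operations of Lemmas \ref{le:remove-zero} and \ref{le:reduce-legs} whenever non-zero legs are demanded. The remaining filling-out-zeros step is the standard construction already used throughout subsection \ref{subsec:Grass} and presents no new difficulty.
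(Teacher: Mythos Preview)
Your argument is essentially the paper's own: the filtration $Z_i=\Ima(A^{\varphi}_z)^i$ is precisely the filtration by $A^{\varphi}_z$-images of Example \ref{ex:burstall-J2}, and the paper derives the corollary directly from Proposition \ref{pr:J2-from-splits} using exactly this filtration. The necessity direction via Proposition \ref{pr:J2-Az-equiv} is also correct.

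However, your final paragraph attempting to upgrade ``lift of $\varphi$ or $\varphi^{\perp}$'' to ``lift of $\varphi$'' does not work. Prepending a zero leg and then tidying with Lemma \ref{le:remove-zero} is circular: Operation 1 removes the prepended zero leg and flips the sign of the projection, returning exactly the lift of $\varphi^{\perp}$ you started with. Running the construction with $\varphi^{\perp}$ in place of $\varphi$ does not help either, since Proposition \ref{pr:J2-from-splits} applied to $\varphi^{\perp}$ again produces a lift of $\varphi^{\perp}$ or $(\varphi^{\perp})^{\perp}=\varphi$, with no control over which one emerges.

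In fact no fix is possible: the corollary as literally worded is false. Example \ref{ex:r2}(ii) exhibits a full isotropic harmonic map $\varphi^{\perp}:M\to\CP{n-1}$ which is nilconformal but admits \emph{no} $J_2$-holomorphic twistor lift, since any such lift would need at least three non-zero legs and hence have twistor projection of rank at least two. The corollary must therefore be read as ``a twistor lift of $\varphi$ or $\varphi^{\perp}$'', in agreement with Theorem \ref{th:main}, with Burstall's original statement quoted in the introduction, and with the paper's own derivation from Proposition \ref{pr:J2-from-splits}. Once this is understood, your proof is complete and coincides with the paper's.
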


Using our methods, we can give a more detailed result, see Theorem \ref{th:lift-with-uniton}.  

We now explain how Burstall's construction fits into our theory.
Recall that, when $\varphi:M \to \U n$ is a
 harmonic map, $A_z^{\varphi}$ is a holomorphic endomorphism of
 $(\CC^n, D^{\varphi}_{\zbar})$.

\begin{example} \label{ex:burstall-J2} 
Let $\varphi:M \to \U n$ be a nilconformal harmonic map so that
$(A^{\varphi}_z)^{t+1} = 0$ for some $t \in \nn$. Set $Z_0 = \CC^n$ and $Z_i = \Ima(A_z^{\varphi}|_{Z_{i-1}})$.
 Then we obtain an $A_z^{\varphi}$-filtration:
$
Z_i = \Ima(A_z^{\varphi})^i,
$
which we call the \emph{filtration by $A^{\varphi}_z$-images};
note that $Z_{t+1} = 0$ which implies that all inclusions $Z_{i+1} \subset Z_i$ are strict.
If\/ $\varphi$ is Grassmannian, this clearly splits, so
we may apply part (iii) of Lemma \ref{le:combine} to obtain an alternating
$A_z^{\varphi}$-filtration $(\wt Z_i)$   with legs
\eqref{split3}. Alternatively, we may apply part (iv) to obtain an alternating
$A_z^{\varphi}$-filtration $(\wt Z_i)$ with legs \eqref{split4}.

By strictness of the filtration, all the legs in \eqref{split3} and \eqref{split4} are non-zero, with the possible exception of the first ones.
However, since $Z_1 \neq Z_0$, either (a) $A'_{\varphi^{\perp}}$ is not surjective and $\wh\psi_0$ is non-zero, in which case
\eqref{split3} gives a $J_2$-holomorphic lift of $\varphi$, or
(b) $A'_{\varphi}$ is not surjective and $\wh\psi_1$ is non-zero, in which case \eqref{split4} gives a $J_2$-holomorphic lift of
$\varphi^{\perp}$; for some $\varphi$, both (a) and (b) hold and we get both lifts. 
In case (a) we get  $\wt Z_i = \wt U_i + \wt V_i$ with
$\wt U_i = \Ima(A_z^{\varphi})^{2i-1}|_{\varphi^{\perp}}
= \Ima \bigl((A'_{\varphi^{\perp}} \circ A'_{\varphi})^{i-1} \circ A'_{\varphi^{\perp}} \bigr)$ and
$\wt V_i = \Ima(A_z^{\varphi})^{2i}|_{\varphi^{\perp}}
= \Ima (A'_{\varphi} \circ A'_{\varphi^{\perp}})^i$ for $i = 1,2,\ldots$.
The formulae for case (b) are obtained by interchanging
$\varphi$ and $\varphi^{\perp}$.  This interprets a construction of Burstall \cite[Section 3]{burstall-grass};
see also Example \ref{ex:nilconf-Y}.
\end{example}
  
\begin{example} \label{ex:burstall-dual}
Dually (cf.\ Example \ref{ex:dual}), set
$\wh Z_i = \ker (A^{\varphi}_z)^{t+1-i}$ so that $\wh Z_{t+1}=0$; we call this the
\emph{filtration by $A^{\varphi}_z$-kernels}.
{}From $(A^{\varphi}_z)^{t+1} = 0$, we see that the filtration $(Z_i)$ by $A^{\varphi}_z$-images in the last example
and the filtration $(\wh Z_i)$ by $A^{\varphi}_z$-kernels satisfy
$Z_i \subset \wh Z_i$; however, these two filtrations are different, in general.
\end{example}
 
Note that $A^{\varphi}_z|_{\varphi} \equiv A'_{\varphi}$ is zero if and only if
$\varphi$ is antiholomorphic, equivalently, $\varphi^{\perp}$ is holomorphic.
The first non-trivial case of a nilconformal map $\varphi$ is when
 $(A^{\varphi}_z)^2|_{\varphi} \equiv A'_{\varphi^{\perp}} \circ A'_{\varphi}$
is zero; we consider such maps now.

\begin{example} \label{ex:str-conf}
A harmonic map $\varphi:M \to G_*(\cn^n)$ is called \emph{strongly conformal} if\/ $A'_{\varphi^{\perp}} \circ A'_{\varphi} = 0$, equivalently,
$G'(\varphi)$ and $G''(\varphi)$ are orthogonal; such maps are clearly nilconformal.
If\/ $\varphi$ is a strongly conformal harmonic map which is neither holomorphic nor antiholomorphic, then $G'(\varphi)$ and $G''(\varphi)$ are non-zero orthogonal subbundles of $\varphi^{\perp}$, and we have twistor lifts as follows:

(i) In Example \ref{ex:burstall-J2}, case (b) gives the $J_2$-holomorphic lift
$\psi = \bigl(G'(\varphi)^{\perp} \cap \varphi^{\perp}, \varphi, G'(\varphi)\bigr)$ of $\varphi^{\perp}$.

(ii) Similarly, we have a $J_2$-holomorphic lift of
$\varphi^{\perp}$ given by 
$\psi = \bigl(G''(\varphi), \varphi, G''(\varphi)^{\perp} \cap \varphi^{\perp})$.
This lift is dual to (i) in the sense that it is obtained from the lift in (i) by replacing the complex structure
on the domain by its conjugate (cf.\ Example \ref{ex:dual}).

(iii) Examples (i) and (ii) are the extreme cases of the following. 
\emph{Let $\varphi:M \to G_*(\cn^n)$ be a strongly conformal harmonic map which is neither holomorphic nor antiholomorphic.  Suppose that
\be{W-cond}
W \text{ is a holomorphic subbundle of\/ }
\varphi^{\perp} \text{ satisfying\/ } \Ima A'_{\varphi} \subset W \subset \ker A'_{\varphi^{\perp}}.
\ee
Set $V = W^{\perp} \cap \varphi^{\perp}$.  
Then $(V,\varphi,W)$ is a $J_2$-holomorphic lift of\/ $\varphi^{\perp}$, and every
$J_2$-holomorphic lift of\/ $\varphi^{\perp}$ with three legs is given this way}.
To see this, first note that the conditions \eqref{W-cond} are equivalent to
\emph{\be{V-cond}
V \text{ is an antiholomorphic subbundle of\/ } \varphi^{\perp} \text{ satisfying\/ }
\Ima A''_{\varphi} \subset V \subset \ker A''_{\varphi^{\perp}}.
\ee}
Then note that these conditions are equivalent to saying that we have a diagram \eqref{diag:str-conf}, where as usual, the arrows indicate the possible non-zero ($\pa'$-)second fundamental forms.
\begin{equation}
\begin{gathered}\label{diag:str-conf}
\xymatrixrowsep{0.3pc}\xymatrixcolsep{3pc}\xymatrix{
 & V \ar[dl] \\  \varphi \ar[dr] & \\
  & W \ar[uu]
 }
\end{gathered}
\end{equation}
Finally, from Proposition \ref{pr:J1-J2}, we see that this diagram says precisely that
$\psi =(V,\varphi,W)$ is $J_2$-holomorphic.

(iv) On replacing $\varphi$ by $\varphi^{\perp}$, we obtain the following from part (iii).
\emph{Let $\varphi:M \to G_2(\cn^n)$ be a harmonic map which is neither holomorphic nor antiholomorphic.  Suppose that $\varphi^{\perp}$ is strongly conformal.  Then $\varphi$ has a
\emph{unique} $J_2$-holomorphic lift
$\psi = \bigl(G''(\varphi^{\perp}), \varphi^{\perp}, G'(\varphi^{\perp})\bigr)$}.  See
Corollary \ref{co:G2Cn}
for more information, and Example \ref{ex:r2} for the superhorizontal case which links to \cite{eells-wood} and \cite{erdem-wood}.
\end{example}

\begin{example} \label{ex:superconf} 
Consider the isometric minimal immersion of the torus 
$\cn/\langle 2\pi/\sqrt{3}, \ 2\pi\ii \rangle$ into $\CP{2}$ given by the harmonic map
\begin{equation}
\varphi(z) = \bigl[\eu^{z-\ov{z}},\, \eu^{\zeta z - \ov{\zeta z}},\,
    \eu^{\zeta^2 z - \ov{\zeta^2 z}}\,\bigr] \,,
\ee
where $\zeta = \eu^{2\pi\ii/3}$.
 For $i = 0,1,2,\ldots$, set $\varphi_i$ equal to the $i$th $\pa'$-Gauss transform
$G^{(i)}(\varphi)$ (see \S \ref{subsec:Grass}).  Then
$$
\varphi_i(z) = \bigl[\eu^{z-\ov{z}},\, \zeta^i \eu^{\zeta z - \ov{\zeta z}},\,
    \zeta^{2i} \eu^{\zeta^2 z - \ov{\zeta^2 z}}\,\bigr]\,;
$$
in particular, $G^{(3)}(\varphi) = \varphi$ showing that $\varphi$ is \emph{superconformal}.
 \cite{bolton-woodward,bolton-pedit-woodward}.  

 It follows that $\varphi$ is \emph{of finite type}, see \cite[Corollary 2.7]{bolton-woodward}; such a map cannot be of finite uniton number by a result of R.\ Pacheco \cite{pacheco-tori}.
For this $\varphi$, the $A_z^{\varphi}$-filtrations $(z_i)$ of Examples \ref{ex:burstall-J2},
\ref{ex:burstall-dual} and \ref{ex:str-conf} all coincide and are given by $Z_0 = \CC^n$,
$Z_1 = \varphi \oplus \varphi_1$,
$Z_2 = \varphi_1$, $Z_3 = \ul{0}$; the legs of this filtration define
the $J_2$-holomorphic lift
$\psi = (\varphi_2, \varphi, \varphi_1) = \bigl(G''(\varphi), \varphi, G'(\varphi)\bigr)$
of $\varphi^{\perp}$.  
\end{example}

\section{Twistor lifts from extended solutions}  \label{sec:twistor-extended}
\subsection{Extended solutions} \label{subsec:ext-sol}

Let $G$ be a Lie group with identity element $e$ and Lie algebra $\g \cong T_eG$, and let $\g^{\cn}$ denote the complexified Lie algebra $\g \otimes \cn$.  Let $\Omega G$ be the \emph{(based) loop group}
$\Omega G =\{\gamma:S^1 \to G \textrm{ smooth}: \gamma(1)=e\}$.
Recall \cite{uhlenbeck,burstall-guest} that a smooth map $\Phi:M\to\Omega G$ is said
to be an \emph{extended solution} if, on any coordinate domain $(U,z)$,
we have $\Phi^{-1}\Phi_z=(1-\lambda^{-1})A$, for some map $A:U\to\g^{\cn}$. 

For any map $\Phi:M \to \Omega G$ and $\lambda \in S^1$, we define
$\Phi_{\lambda}:M \to G$  by $\Phi_{\lambda}(p) = \Phi(p)(\lambda)$ \ $(p \in M)$.
If $\Phi:M\to\Omega G$ is an extended solution,  the map
$\varphi=\Phi_{-1}:M\to G$ is harmonic
and
$\varphi^{-1}\varphi_z=2A$; on comparing with \eqref{type-decomp} we see that $A = A^{\varphi}_z$.

Conversely, given a harmonic map $\varphi:M\to G$, an extended solution $\Phi:M\to\Omega G$
is said to be \emph{associated} to $\varphi$ if
$
\Phi^{-1}\Phi_z=(1-\lambda^{-1})A^{\varphi}_z\,,
$
equivalently, $\varphi= g \/\Phi_{-1}$
for some $g\in G$.  Extended solutions associated to any given harmonic map always exist locally;
they exist globally if the domain $M$ is simply connected, for example, if $M = S^2$. Further, any two extended solutions  $\Phi$, $\wt{\Phi}$ associated to the same harmonic map differ by a loop on their common domain $D \subset M$, i.e., $\wt\Phi = \gamma \Phi$ for some
$\gamma \in \Omega G$; in particular, they are equal if they agree at some point of $D$.

Let $\H = \H^{(n)}$ denote the Hilbert space $L^2(S^1,\cn^n)$. By expanding into Fourier series, we have
$$\H = \text{ closure of } \
	\spa\{\lambda^i e_j\ :\ i\in\zn,\ j=1,\dots,n\},
$$
where $\{e_1,\dots,e_n\}$ is the standard basis for $\cn^n$.
Thus, elements of $\H$ are of the form $v = \sum_i \lambda^i v_i$ where each
$v_i \in \cn^n$. We define projections $P_i:\H \to \cn^n$ which give the Fourier coefficients:
\begin{equation} \label{fourier-coeffs}
P_i \Bigl(\sum_i \lambda^i v_i \Bigr) = v_i \quad (i \in \zn).
\end{equation}
If $w = \sum_i \lambda^i w_i$ is another element of $\H$, its
$L^2$ inner product with $v$ is given by $\ip{v}{w} = \sum_i v_i \ov w_i$.
The natural action of $\U n$ on $\cn^n$ induces an action of $\Omega\U n$ on $\H$ which is isometric with respect to this $L^2$ inner product.  We consider the closed subspace 
$$
\H_+= \text{ closure of } \, \spa\{\lambda^i e_j\ :\ i \in \nn,\ j=1,\dots,n\},
$$
with orthogonal complement in $\H$ given by
$$
\H_+^\perp = \text{ closure of } \, \spa\{\lambda^{-i} e_j\ :\ i=1,2,\dots, \ j=1,\dots,n\}.
$$
The action of $\Omega\U n$ on $\H$ induces an action of $\Omega\U n$ on subspaces of $\H$; denote by $\Gr = \Gr^{(n)}$ the orbit of $\H_+$ under that action. It is known from \cite{pressley-segal} that $\Gr$ consists of all the closed subspaces $W\subset\H$ which enjoy the following properties:
\begin{itemize}
\item[(i)] $W$ is closed under multiplication by $\lambda$, i.e., $\lambda W\subset W$;
\item[(ii)] the orthogonal projection $W\to\H_+$ is Fredholm;
\item[(iii)] the orthogonal projection $W\to\H_+^{\perp}$ is Hilbert--Schmidt;
\item[(iv)] the images of the orthogonal projections $W\to\H_+^{\perp}$ and $W^\perp\to\H_+$ consist of smooth functions.
\end{itemize}
Furthermore, we have a bijective map  
\begin{equation} \label{Grass-model}
\Omega\U n\ni\Phi \mapsto W = \Phi\H_+\in\Gr\,;
\end{equation}
we call $W$ the \emph{Grassmannian model of\/
$\Phi$}.  The map \eqref{Grass-model} restricts to a bijection from the
\emph{algebraic loop group} $\Omega_{\alg}\U n$ consisting of
those $\gamma \in \Omega\U n$ given by finite Laurent series:
$\gamma = \sum_{i=s}^r \lambda^k T_k$\,, where $r \geq s$ are integers and the $T_k$ are $n \times n$ complex matrices, to the
set of $\lambda$-closed subspaces $W$ of $\H$
satisfying $\lambda^r\H_+ \subset W\subset\lambda^{s}\H_+\,,$
for some integers $r \geq s$.

For $r \in \nn$, let $\Omega_r\U n$ denote the set  of polynomials $\Phi = \sum_{k=0}^r \lambda^k T_k$ in $\Omega_{\alg}\U n$ of degree at most $r$.  Then \eqref{Grass-model} further restricts to a bijection
from $\Omega_r\U n$ to the subset $\Gr_r \subset \Gr$ of those $\lambda$-closed subspaces $W$ of $\H$ satisfying
\be{W-finite}
\lambda^r\H_+ \subset W \subset \H_+\,.
\ee

Now let $\Phi:M \to \Omega\U n$ be a smooth map and set $W =\Phi\H_+:M \to \Gr$.  We call $W$ an \emph{extended solution}
if {\rm (i)} $W$ is holomorphic, i.e. $\pa_{\zbar} (\Gamma(W)) \subset \Gamma(W)$, and
{\rm (ii)} $\Gamma(W)$ is closed under the operator $F = \lambda \pa_z$,
i.e., $F \bigl(\Gamma(W)\bigr) \subset \Gamma(W)$.  (Here, as usual, $\Gamma(\cdot)$ denotes the space of (smooth) sections of a vector bundle.) 
Then \cite{segal},
\emph{$\Phi$ is an extended solution if and only if\/  $W$ is an extended solution}.

A harmonic map $\varphi:M \to \U n$ is said to be of \emph{finite uniton number} if it has a polynomial associated extended solution $\Phi$ (so that $\varphi = g\Phi_{-1}$ for some $g \in \U{n}$) then the minimum degree of such a polynomial $\Phi$ is called the
\emph{(minimal) uniton number of\/ $\varphi$}\,.  All harmonic maps from $S^2$ to $\U n$ are of finite uniton number \cite{uhlenbeck}.
Uhlenbeck further showed  that, if $\varphi:M \to \U n$ has finite uniton number $r$, then
$r \leq n-1$ and $\varphi$ has a unique polynomial associated extended solution $\Phi = \sum_{k=0}^r \lambda^k T_k $ of degree $r$ with
$\Ima T_0$ \emph{full}, i.e., not lying in any proper subspace.  As in \cite{he-shen1,ferreira-simoes-wood,unitons}, we call this the \emph{type one extended solution}; however, this concept does not seem to be useful for the real cases in \S\ref{sec:real}\textit{ff}.

\subsection{Finding $J_2$-holomorphic lifts from extended solutions} \label{subsec:lifts-extended}

When we have an extended solution $\Phi$ for our harmonic map $\varphi:M \to \U{n}$, we can get
$A^{\varphi}_z$-filtrations, and thus twistor lifts, from suitable filtrations of the Grassmannian model of $\Phi$, as we now explain.

\begin{definition} \label{def:F-filtr} Let $W = \Phi\H_+$ be an extended solution.  Let $(Y_i)$ be a sequence of $\lambda$-closed
subbundles of $\CC^n$ with
\be{Y-filt}
W = Y_0 \supset Y_1 \supset \cdots \supset Y_t \supset Y_{t+1} =\lambda W.
\ee
Call $(Y_i)$  an \emph{$F$-filtration (of\/ $W$ of length\/ $t$)} if, for each $i$,
\begin{enumerate}
\item[(i)] $Y_i$ is holomorphic, i.e., $\Gamma(Y_i)$ is closed under $\partial_{\zbar}$, and
\item[(ii)] $F = \lambda \pa_z$ maps sections of $Y_i$ into sections of the smaller subbundle $Y_{i+1}$\,.
\end{enumerate}
These conditions imply that each $Y_i$ is an extended solution.
\end{definition}

Now let $W = \Phi\H_+$ be an extended solution and $\varphi = \Phi_{-1}:M \to \U n$ the corresponding harmonic map.
Consider the bundle morphism $P_0 \circ \Phi^{-1}:W \to \CC^n$ where $\HH$ denotes the trivial bundle $M \times \H$
and $P_0:\HH\to\CC^n$ denotes projection onto the zeroth Fourier coefficient,
as in \eqref{fourier-coeffs}.
If follows from the extended solution equations (see \cite[Proposition 3.9]{unitons}) that
the mapping $P_0 \circ \Phi^{-1}$ intertwines the operators
(i) $\pa_{\zbar}$ with $D_{\zbar}^{\varphi}$ and (ii) $F$ with $-A_z^{\varphi}$,
i.e., we have the commutative diagrams \eqref{diag:F-corr-Az}, where the vertical arrows are surjective maps with kernel $\Gamma(\lambda W)$.
\begin{equation}
\begin{gathered}\label{diag:F-corr-Az}
\xymatrixrowsep{1.5pc}\xymatrixcolsep{3pc}\xymatrix{
\Gamma(W) \ar[r]^{\pa_{\zbar}} \ar[d]_{P_0 \circ \Phi^{-1}} & \Gamma(W) \ar[d]^{P_0\circ\Phi^{-1}}
	\\
\Gamma(\CC^n) \ar[r]_{D_{\zbar}^{\varphi}} & \Gamma(\CC^n)}
\qquad
\xymatrixrowsep{1.5pc}\xymatrixcolsep{3pc}\xymatrix{
\Gamma(W) \ar[r]^F \ar[d]_{P_0 \circ \Phi^{-1}} & \Gamma(W) \ar[d]^{P_0 \circ \Phi^{-1}}
	\\
\Gamma(\CC^n) \ar[r]_{-A_z^{\varphi}} & \Gamma(\CC^n)
}
\end{gathered}
\end{equation}

Now, given a filtration \eqref{Y-filt}, we associate to it a filtration $(Z_i)$ of $\CC^n$ by setting
\be{Z-from-Y}
Z_i = P_0 \circ \Phi^{-1}Y_i \qquad (i=0,1,\ldots,t+1).
\ee

Property (i) above says that \emph{the bundle morphism $P_0 \circ \Phi^{-1}:(W,\pa_{\zbar}) \to (\CC^n, D_{\zbar}^{\varphi})$
is holomorphic}; this enables us to fill out zeros to make each $Z_i$  a subbundle.
Thus we have the commutative diagram \eqref{diag:F-Az-comm}.
\begin{equation}
\begin{gathered}\label{diag:F-Az-comm}
\xymatrixrowsep{1.5pc}\xymatrixcolsep{0pc}\xymatrix{
W = Y_0 \ar[d]_{P_0 \circ \Phi^{-1}} & \supset &  Y_1  \ar[d]^{P_0 \circ \Phi^{-1}}
	& \supset  & \dots & \supset & Y_t  \ar[d]^{P_0 \circ \Phi^{-1}} & \supset & Y_{t+1}
	= \lambda W  \ar[d]^{P_0 \circ \Phi^{-1}} \\
\CC^n = Z_0 & \supset & Z_1 & \supset  & \dots & \supset & Z_t & \supset & Z_{t+1} = \ul{0}
}
\end{gathered}
\end{equation}
Here, each vertical map is a restriction of $P_0 \circ \Phi^{-1}:W \to \CC^n$ and is a
surjective bundle morphism with kernel $\lambda W$;
further, each $Y_i$ is the inverse image of $Z_i$ under $P_0 \circ \Phi^{-1}$.

{}From Property (ii) above, we see that \emph{$(Y_i)$
is an $F$-filtration if and only if\/ $(Z_i)$ is an
$A_z^{\varphi}$-filtration}.
Thus, for an extended solution $W = \Phi\H_+$ and corresponding harmonic map
 $\varphi= \Phi_{-1}$,
\emph{there is a one-to-one correspondence between
$F$-filtrations of\/ $W$ and $A^{\varphi}_z$-filtrations
of\/ $\CC^n$}; in particular, \emph{$F$-filtrations of\/ $W$  exist if and only if $\varphi$
is nilconformal}.

We now give an important example of a $F$-filtration which will give us a canonical twistor lift for a harmonic map of finite uniton number.  In the sequel, let $r \in \nn$.

\begin{example} \label{ex:can-filt}
Let $\Phi$ be an extended solution.  Suppose that this is polynomial of degree $r$
so that $W = \Phi\H_+$ satisfies \eqref{W-finite}. Set
\be{can-filt}
Y_i =  W \cap \lambda^i \HH_+ + \lambda W \quad (i=0,1,\ldots,r+1); 
\ee
where where $\HH_+$ denotes the trivial bundle $M \times \H_+$.
Since $\lambda^{r+1}\HH_+ \subset \lambda W$, we see that $(Y_i)$
is an $F$-filtration of length $r$; we shall call it the
\emph{canonical $F$-filtration for $\Phi$}.  Setting $\varphi = \Phi_{-1}$\,,
we call the associated $A_z^{\varphi}$-filtration $(Z_i)$
obtained from $(Y_i)$ by \eqref{Z-from-Y}
 the \emph{canonical $A_z^{\varphi}$-filtration for $\Phi$}.
See Theorem \ref{th:can-lift} for the resulting twistor lift; we calculate this in terms of unitons in Example \ref{ex:can-filt2}.  
\end{example}

This example shows that \emph{if $\varphi:M \to \U n$ is a harmonic map of finite
uniton number $r$, then it is nilconformal with $(A_z^{\varphi})^{r+1} = 0$}.

To apply the above work to maps to a Grassmannian we now identify the appropriate class of extended solutions.
Let $\nu:\HH \to \HH$ be the involution $\lambda\mapsto-\lambda$. Then,
as in \cite[\S 8]{uhlenbeck} and \cite[\S 3]{segal},
\emph{$W = \Phi\H_+$ is closed under $\nu$ if and only if\/ $\Phi$ is $\nu$-invariant in the sense that}
\be{Phi-sym}
\Phi_{\lambda}\Phi_{-1}=\Phi_{-\lambda} \qquad (\lambda \in S^1)\,;
\ee
this condition implies that the map $\varphi=\Phi_{-1}$ satisfies $\varphi^2 = I$, which means that it has image in a complex Grassmannian $G_*(\cn^n)$.
Conversely, we have the following result.

\begin{lemma} \label{le:Q}
{\rm (i)} Let $\varphi:M\to G_*(\cn^n)$ be a harmonic map from a Riemann surface which has an
associated extended solution.  Then it has a $\nu$-invariant extended solution $\Psi$ with $\Psi_{-1} = \varphi$.

{\rm (ii)} Suppose that $\varphi:M \to G_*(\cn^n)$ is a harmonic map of uniton number $r$.  Then it has a
$\nu$-invariant polynomial extended solution $\Psi$ of degree $r$ or $r+1$ with $\Psi_{-1} = \varphi$.
\end{lemma}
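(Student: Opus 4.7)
The strategy is to start from any extended solution $\Phi$ associated to $\varphi$ and to gauge it into a $\nu$-invariant one. Since $\Phi_{-1}=g\varphi$ for some constant $g\in\U n$ and $\U n$ is connected, I first choose a loop $\eta\in\Omega\U n$ with $\eta(-1)=g^{-1}$ and replace $\Phi$ by $\eta\Phi$, so we may assume $\Phi_{-1}=\varphi$. Since $\varphi^2=I$ under the Cartan embedding, differentiating gives $\varphi\varphi_z+\varphi_z\varphi=0$ and hence $\varphi A^\varphi_z\varphi=-A^\varphi_z$. A direct computation using this identity shows that
\[
\wh\Phi_\lambda \;:=\; \Phi_{-\lambda}\,\varphi
\]
is again an extended solution associated to $\varphi$ with $\wh\Phi_{-1}=\varphi$, and that $\Phi$ is $\nu$-invariant precisely when $\Phi=\wh\Phi$.

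Since $\Phi$ and $\wh\Phi$ come from the same $A^\varphi_z$, they differ by a $p$-constant loop: $\wh\Phi=\delta\Phi$ with $\delta\in\Omega\U n$; one then checks that $\delta(\pm1)=I$ and $\delta(\lambda)\delta(-\lambda)=I$. I next seek a further gauge $\mu\in\Omega\U n$ with $\mu(\pm1)=I$ so that $\Psi:=\mu\Phi$ is $\nu$-invariant; a short computation shows this reduces to
\[
\mu(\lambda)\,\mu(-\lambda)^{-1}=\delta(\lambda).
\]
The relation $\delta(\lambda)\delta(-\lambda)=I$ forces $\det\delta$ to have winding number zero, so $\delta$ lies in the identity component of $\Omega\U n$, and such a ``square-root'' $\mu$ can be produced by defining it freely on the upper semicircle $\{\eu^{\ii\theta}:0\le\theta\le\pi\}$, with $\mu(\pm 1)=I$ and a matching first derivative at $\lambda=\pm 1$, and then extending to the lower semicircle via $\mu(\lambda):=\delta(\lambda)\mu(-\lambda)$; the antisymmetry $\delta(-\lambda)=\delta(\lambda)^{-1}$ guarantees both consistency at the boundary points and unitarity throughout $S^1$. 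This yields the $\nu$-invariant $\Psi$ required in~(i).

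For part~(ii), polynomial-degree bookkeeping forces us to stay within polynomial gauges. When $\Phi$ is polynomial of degree $r$ so is $\wh\Phi$, and I would show that it is enough to take $\mu$ to be a single uniton $\mu=\pi_E+\lambda\pi_E^\perp$ for an appropriate subbundle $E\subset\cn^n$: such a $\mu$ is polynomial of degree one and unitary on $S^1$, with $\mu(1)=I$ and $\mu(-1)=\pi_E-\pi_E^\perp$, so $\mu\Phi$ has degree at most $r+1$. The $\nu$-invariance of $\mu\Phi$ together with the requirement $(\mu\Phi)_{-1}=\varphi$ determines $E$ as the $(+1)$-eigenspace of the involution $g^{-1}$ (where $\Phi_{-1}=g\varphi$ for a suitably chosen polynomial $\Phi$ satisfying $g^2=I$); if $\Phi$ is already $\nu$-invariant one takes $\mu=I$ and stays at degree $r$, giving the ``$r$ or $r+1$'' dichotomy.

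The principal obstacle is twofold. For~(i), the smoothness of the gauge $\mu$ must be checked carefully at the endpoints $\lambda=\pm1$ where the upper- and lower-semicircle definitions meet; the required derivative-matching is possible but technical. For~(ii), one must justify why a polynomial extended solution of a Grassmannian-valued harmonic map can be chosen with $g^2=I$ (so that $g^{-1}$ is a genuine reflection and produces a valid uniton); this relies on the explicit uniton factorization for Grassmannian targets and is precisely what allows a single uniton gauge to restore $\nu$-invariance without exceeding degree $r+1$.
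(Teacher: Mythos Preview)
Your approach to part~(i) is genuinely different from the paper's and, as you yourself flag, is not complete. You set up the problem correctly: $\wh\Phi_\lambda:=\Phi_{-\lambda}\varphi$ is indeed another extended solution with $\wh\Phi_{-1}=\varphi$, so $\wh\Phi=\delta\Phi$ for a constant loop $\delta$ satisfying $\delta(\lambda)\delta(-\lambda)=I$, and $\nu$-invariance of $\mu\Phi$ reduces to the cocycle equation you write. The gap is the one you identify: your semicircle construction of $\mu$ gives only a continuous loop, and arranging all derivatives to match at $\lambda=\pm1$ so that $\mu$ is genuinely smooth (as the definition of $\Omega\U n$ demands) requires something like a Borel-lemma argument, not just matching first derivatives. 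This is fixable, but it is real work.

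The paper avoids this entirely by a base-point trick: normalize so that $\Phi_\lambda(z_0)=I$ for all $\lambda$, then left-multiply by the \emph{constant} homomorphism loop $\gamma(\lambda)=\pi_{\varphi(z_0)}+\lambda\pi_{\varphi(z_0)}^\perp$. The resulting $\Psi=\gamma\Phi$ satisfies $\Psi_{-1}(z_0)=\varphi(z_0)$, hence $\Psi_{-1}=\varphi$, and then $\Psi_{-\lambda}\Psi_{-1}^{-1}$ and $\Psi_\lambda$ are two extended solutions associated to $\varphi$ which agree at $z_0$, so they agree everywhere. No functional equation on $S^1$ needs to be solved; the gauge is polynomial of degree one and manifestly smooth.

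For part~(ii) your idea is essentially the paper's: multiply a polynomial $\Phi$ by a single uniton factor $\pi_A+\lambda\pi_A^\perp$. The point you leave vague --- why one can arrange $g^2=I$ --- is exactly what the paper extracts from Uhlenbeck's uniqueness of the \emph{type one} extended solution: for that particular $\Phi$, the constant $Q$ with $\varphi=Q\Phi_{-1}$ lies in $G_*(\cn^n)$ and one has the identity $\Phi_\lambda=Q\Phi_{-\lambda}\Phi_{-1}^{-1}Q$, from which $\nu$-invariance of $(\pi_A+\lambda\pi_A^\perp)\Phi$ (with $Q=\pi_A-\pi_A^\perp$) follows directly. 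Citing that result, rather than ``the explicit uniton factorization for Grassmannian targets'', closes the argument.
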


\begin{proof}
(i) Let $\Phi$ be an associated extended solution of $\varphi$.
Fix a base point $z_0\in M$. By replacing $\Phi$ by $\Phi(z_0)^{-1}\Phi$,
we may assume that $\Phi_\lambda(z_0)=I$ for all $\lambda$. Pick a homomorphism $\gamma:S^1\to\U n$ with $\gamma(-1)=\varphi(z_0)$, for example
$\gamma(\lambda) = \pi_{\varphi(z_0)} + \lambda\pi_{\varphi(z_0)}^{\perp}$\,, 
 and define $\Psi=\gamma\Phi$. Then $\Psi$ is an extended solution associated to $\varphi$, and since $\Psi_{-1}(z_0)=\varphi(z_0)$, we have $\Psi_{-1}=\varphi$ everywhere. Now $\Psi_{-\lambda}\Psi^{\:\:-1}_{-1}$ is also an extended solution associated to $\varphi$ and $\Psi_{-\lambda}(z_0)\Psi_{-1}^{\:\:-1}(z_0) = \gamma(-\lambda)\gamma(-1) = \gamma(\lambda)=\Psi_\lambda(z_0)$ for all $\lambda$, which implies that $\Psi$ is $\nu$-invariant. 

(ii) Let $\Phi$ be the type one associated extended solution of $\varphi$ (see \S \ref{subsec:ext-sol}), so that
$\varphi = Q \Phi_{-1}$ for some $Q \in \U n$.
 Uhlenbeck shows \cite[\S 15]{uhlenbeck} (see also \cite[Lemma 4.6]{ferreira-simoes-wood})
that $\varphi$ maps into a Grassmannian if and only if $Q \in G_*(\cn^n)$ and $\Phi_{\lambda} = Q\Phi_{-\lambda}\Phi_{-1}^{\;-1}Q$.
Write $Q = \pi_A - \pi_A^{\perp}$ where
$A$ is a subspace of $\cn^n$; note that if $A = \cn^n$ (resp.\ $A = 0$) then $Q = I$
(resp.\ $Q=-I$). We see that 
$\Psi = (\pi_A + \lambda \pi_A^{\perp})\Phi$ is a $\nu$-invariant polynomial extended solution, of degree $r$ or $r+1$,
with $\Psi_{-1} = \varphi$, as required.
\end{proof}

We remark that the uniton number of a harmonic map $\varphi:M \to G_k(\cn^n)$ of finite uniton number is at most
$\min(2k, 2n-2k, n-1)$ \cite{dong-shen}; in fact \cite[Corollary 5.7]{unitons}, we can find a polynomial extended solution $\Phi$ of degree at most $\min(2k, 2n-2k, n-1)$ with
$\Phi_{-1} = \pm \varphi$. 
For some sharper estimates depending on the rank of $A$, see \cite{ferreira-simoes}.

Next, we consider the effect on the filtrations of $\nu$-invariance.  First note that,
if $W$ is closed under $\nu$, then $W = W_+ \oplus W_-$ where $W_{\pm}$ are the
$\pm 1$-eigenspaces of $\nu$.

\begin{lemma} \label{le:splits}
Let $\Phi$ be a $\nu$-invariant extended solution and set $W = \Phi\H_+$\,. Let $Y$ be a subbundle of\/ $W$
which contains $\lambda W$.  Then
$Y$ is closed under $\nu$ if and only if\/ $Z = P_0 \circ \Phi^{-1}Y$ splits, i.e., is the direct sum of subbundles $Z_+$ and $Z_-$ with $Z_{\pm} \in \pm\varphi$.
In that case, $Z_{\pm} = P_0 \circ \Phi^{-1}Y_{\pm}\,.$
\end{lemma}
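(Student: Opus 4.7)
The plan is to deduce the lemma from the single observation that the bundle morphism $P_0\circ\Phi^{-1}:W\to\CC^n$ intertwines the involution $\nu$ on $W$ (modulo its kernel $\lambda W$) with the Cartan involution $\iota(\varphi)=\pi_\varphi-\pi_\varphi^\perp$ on $\CC^n$. Once this is in hand, both implications follow by matching up eigenspace decompositions.

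First I would translate the $\nu$-invariance condition \eqref{Phi-sym} into a statement about individual sections of $W$. Given $v\in\Gamma(W)$, write $v=\Phi u$ with $u\in\Gamma(\H_+)$. The condition $\nu v=\epsilon v$ for $\epsilon\in\{\pm 1\}$ reads $\Phi_{-\lambda}u(-\lambda)=\epsilon\Phi_\lambda u(\lambda)$, and using \eqref{Phi-sym} this reduces to $u(\lambda)=\epsilon\,\varphi\,u(-\lambda)$. Expanding $u=\sum_{k\ge 0}\lambda^k u_k$ and equating Fourier coefficients yields $\varphi u_k=\epsilon(-1)^k u_k$; in particular the zeroth coefficient $u_0=(P_0\circ\Phi^{-1})(v)$ lies in $\varphi$ when $\epsilon=+1$ and in $\varphi^\perp$ when $\epsilon=-1$. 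This is the crucial link between the two involutions.

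The ``only if'' direction is now immediate: write $Y=Y_+\oplus Y_-$ with $Y_\pm=Y\cap W_\pm$ and set $Z_\pm=(P_0\circ\Phi^{-1})(Y_\pm)$; the previous step gives $Z_\pm\subset\pm\varphi$, and the sum $Z=Z_++Z_-$ is automatically direct because $\varphi\cap\varphi^\perp=0$. For the converse, recall from the discussion immediately preceding the lemma that $Y=(P_0\circ\Phi^{-1})^{-1}(Z)$. Assuming $Z=Z_+\oplus Z_-$ with $Z_\pm\subset\pm\varphi$, take any $v\in\Gamma(Y)$ and split $v=v_++v_-$ via the $\nu$-decomposition of $W$. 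Then $(P_0\circ\Phi^{-1})(v_\pm)\in\pm\varphi$ by the previous paragraph, and their sum lies in $Z_+\oplus Z_-$; uniqueness of this decomposition forces $(P_0\circ\Phi^{-1})(v_\pm)\in Z_\pm$, so $v_\pm\in(P_0\circ\Phi^{-1})^{-1}(Z_\pm)\subset Y$. Hence $Y$ is closed under $\nu$, and the identity $Z_\pm=(P_0\circ\Phi^{-1})(Y_\pm)$ then follows from the same argument together with the surjectivity of $P_0\circ\Phi^{-1}|_Y$ onto $Z$.

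The main obstacle, essentially the only non-formal step, is the Fourier-coefficient calculation in the second paragraph: one must carefully unwind the loop-group identity \eqref{Phi-sym} into a pointwise statement about each Fourier coefficient of $\Phi^{-1}v$, keeping track of how the parity of $k$ interacts with the sign $\epsilon$. After that, the remainder is a diagram chase using $\lambda W\subset Y\subset W$ and the uniqueness of the decomposition $\CC^n=\varphi\oplus\varphi^\perp$.
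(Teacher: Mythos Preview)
Your proof is correct and follows essentially the same approach as the paper: both arguments rest on the observation that $P_0\circ\Phi^{-1}$ intertwines $\nu$ on $W$ with the Cartan involution $\varphi=\pi_\varphi-\pi_{\varphi}^\perp$ on $\CC^n$. The paper states this intertwining identity in one line and declares the lemma established; you instead verify it explicitly via the Fourier-coefficient calculation and then spell out the eigenspace-matching diagram chase in both directions, which amounts to filling in the details the paper omits.
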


\begin{proof}
The $\nu$-invariance condition \eqref{Phi-sym} implies that 
$\Phi_{-1}\circ P_0\circ\Phi^{-1} = P_0\circ\Phi^{-1} \circ\nu$, i.e.,
$P_0\circ\Phi^{-1}$ intertwines $\nu$ with the involution $\Phi_{-1}=\pi_{\varphi}-\pi_{\varphi^{\perp}}$ on $\CC^n$, which establishes the lemma. 
\end{proof}

Thus, in \eqref{diag:F-Az-comm}, each $Z_i$ splits if and only if each $Y_i$ is closed under $\nu$. 

\begin{remark}
For an alternative point of view, given a filtration \eqref{Y-filt}, set
$\wh Y_i = \pi(Y_i)$ \ $(i=0,1,\ldots, t+1)$, where
$\pr:W\to W/\lambda W$ denotes the natural projection. Since $Y_i$ contains $\lambda W$, we have $Y_i = \pi^{-1}(\wh Y_i)$.
The operator $F$ descends to $W/\lambda W$ and becomes tensorial; we call a filtration
$$
W/\lambda W = \wh Y_0 \supset \wh Y_1 \supset \cdots \supset \wt Y_t \supset \wh Y_{t+1} = \ul{0}
$$ an \emph{$F$-filtration (of\/ $W/\lambda W$)} if\/ $F$ maps $\wh Y_i$ into $\wh Y_{i+1}$\,.
Then \emph{$Y_i$ is an $F$-filtration if and only if\/ $\wh Y_i$ is}, and
\emph{the isomorphism $\Phi^{-1}: W/\lambda W \to \CC^n$ gives a one-to-one correspondence between $F$-filtrations of\/
 $W/\lambda W$ and $A_z^{\varphi}$-filtrations of\/ $\CC^n$}.
Since  $\Phi^{-1}\circ \pr = P_0\circ\Phi^{-1}$, we have
$Z_i = P_0\circ\Phi^{-1} Y_i = \Phi^{-1}\wh Y_i$\,. Lastly, $\nu$ descends to $W/\lambda W$, and 
invariance of $Y_i$ under $\nu$ is equivalent to invariance of $\wh Y_i$ under $\nu$.
Hence, it would be natural to work in $W/\lambda W$; however, for convenience, we continue to work in $W$.
\end{remark}

For a harmonic map to a Grassmannian, we can choose an extended solution $\Phi$ which is $\nu$-invariant; we now show that the
canonical filtration for such a $\Phi$ is alternating; see Example \ref{ex:can-filt2} for more information.

\begin{proposition} \label{pr:can-alternate}
Let $\Phi$ be a $\nu$-invariant polynomial extended solution of degree $r$.
Set $W = \Phi\H_+$ and $\varphi = \Phi_{-1}:M \to G_*(\cn^n)$.
Let $(Z_i)$ be the canonical $A_z^{\varphi}$-filtration of Example \ref{ex:can-filt}, and
set $\psi_i = Z_i \ominus Z_{i+1}$ \ $(i=0,1,\ldots, r)$.   Then
$\psi_i \subset (-1)^i \varphi$ so that $\varphi = \sum_j \psi_{2j}$\,.  
\end{proposition}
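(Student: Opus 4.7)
The plan is to exploit the $\nu$-invariance of $\Phi$ to split the canonical filtration $(Y_i)$ along the $\nu$-eigenspaces of $W$, then transfer the splitting across to $(Z_i)$ via Lemma~\ref{le:splits}, and read off which leg lies on which side of $\varphi$.

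First I would observe that each $Y_i = W \cap \lambda^i \H_+ + \lambda W$ is closed under $\nu$: $W$ is $\nu$-closed by hypothesis, $\lambda W$ is obviously so, and $\lambda^i \H_+$ is preserved by $\nu$ since $\nu$ acts on Fourier modes as $\lambda^j v \mapsto (-1)^j \lambda^j v$.  Hence Lemma~\ref{le:splits} yields a splitting $Z_i = Z_i^+ \oplus Z_i^-$ with $Z_i^\pm \subset \pm\varphi$ and $Z_i^\pm = P_0 \circ \Phi^{-1}(Y_i)_\pm$. Decomposing $W = W_+ \oplus W_-$ into $\nu$-eigenspaces (so that $W_+$, resp.\ $W_-$, consists of sections whose Fourier expansion has only even, resp.\ odd, powers of $\lambda$) and using $(\lambda W)_+ = \lambda W_-$ and $(\lambda W)_- = \lambda W_+$, I obtain
\[
(Y_i)_+ = (W_+ \cap \lambda^i \H_+) + \lambda W_-\,, \qquad
(Y_i)_- = (W_- \cap \lambda^i \H_+) + \lambda W_+\,.
\]

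The crux is a short parity calculation on the intersections $W_\pm \cap \lambda^i \H_+$. Since an element of $W_+$ automatically has its odd Fourier coefficients vanishing, the constraint of belonging to $\lambda^{2k+1} \H_+$ imposes the same vanishing conditions as belonging to $\lambda^{2k+2} \H_+$; thus $W_+ \cap \lambda^{2k+1} \H_+ = W_+ \cap \lambda^{2k+2} \H_+$. Dually, $W_- \cap \lambda^{2k} \H_+ = W_- \cap \lambda^{2k+1} \H_+$. Feeding these into the previous formulae gives $(Y_{2k})_- = (Y_{2k+1})_-$ and $(Y_{2k+1})_+ = (Y_{2k+2})_+$, and therefore $Z_{2k}^- = Z_{2k+1}^-$ and $Z_{2k+1}^+ = Z_{2k+2}^+$.

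Finally, $Z_i^+ \perp Z_i^-$ because they sit in the orthogonal subbundles $\varphi$ and $\varphi^\perp$, so the leg $\psi_i = Z_i \ominus Z_{i+1}$ lies entirely in whichever of $Z_i^\pm$ shrinks when passing from $Z_i$ to $Z_{i+1}$. By the previous step this is $Z_i^+$ when $i$ is even and $Z_i^-$ when $i$ is odd, yielding $\psi_i \subset (-1)^i \varphi$. The identity $\varphi = \sum_j \psi_{2j}$ then follows from $\CC^n = Z_0 = \bigoplus_i \psi_i$. I expect the parity bookkeeping in the calculation of $W_\pm \cap \lambda^i \H_+$ to be the main source of friction --- it would be easy to be off by one about which index of the filtration has which component stable --- but once that Fourier check is done cleanly, the rest is formal.
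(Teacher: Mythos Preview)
Your proof is correct and rests on the same parity observation as the paper's: an element of $W_\pm$ has only even (resp.\ odd) Fourier coefficients, so the passage from $\lambda^i\HH_+$ to $\lambda^{i+1}\HH_+$ only cuts down one of the two eigenspaces. The paper carries this out elementwise --- picking $x\in\psi_i$, lifting to $y\in W\cap\lambda^i\HH_+$, splitting $y=y_++y_-$, and showing directly that the ``wrong'' component $y_\mp$ already lies in $Y_{i+1}$ --- whereas you package the same computation at the level of the whole filtration by first invoking Lemma~\ref{le:splits} and then tracking which of $(Y_i)_\pm$ is stable under $i\mapsto i+1$. Your route is a little more systematic and makes the role of Lemma~\ref{le:splits} explicit; the paper's is marginally shorter since it avoids writing out $(Y_i)_\pm$ in full. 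Either way the content is the same.
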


\begin{proof} Let $x \in \psi_i = Z_i \ominus Z_{i+1}$.  Then $x = P_0\circ\Phi^{-1}(y)$ for some $y \in W \cap \lambda^i \HH_+$\,. Write $y = y_+ + y_-$ where $y_{\pm} \in W_{\pm}$\,.  Then $x = x_+ + x_-$ where
$x_{\pm} = P_0\circ\Phi^{-1}(y_{\pm}) \in \pm\varphi$.

If $i$ is even, $y_- \in Y_{i+1}$; indeed, write $y_- = y_1 + y_2$ where $y_1 = P_i y_-$ (where $P_i$ is as in \eqref{fourier-coeffs}), then
applying $\nu$ gives $-y_- = y_1 + \nu y_2$, so that $y_1 = 0$.
Hence, $x_- \in Z_{i+1}$.  Now $x$ is orthogonal to $Z_{i+1}$\,; it follows that
$0 = \ip{x_+ + x_-}{x_-} = \ip{x_-}{x_-}$ so that $x_- = 0$ and $x = x_+ \in \varphi$.
Similarly if $i$ is odd, $x_+ = 0$ and $x = x_- \in -\varphi = \varphi^{\perp}$.
\end{proof}

\begin{remark} \label{re:Ai}
As in \cite[\S 3.4]{unitons}, write  $A_i = \wh Y_i \ominus \wh Y_{i+1}$; the isometry $\Phi:W/\lambda W \to \CC^n$ maps $A_i$ onto $\psi_i$.  Proposition \ref{pr:can-alternate} is equivalent
to saying that, for each $i$, $A_i$ is in the $(-1)^i$-eigenspace of $\nu:W/\lambda W \to W/\lambda W$. 
\end{remark}

By Lemma \ref{le:J2-Az-equiv}, the moving flag $\psi = (\psi_0,\psi_1, \ldots, \psi_r)$ defined in Proposition \ref{pr:can-alternate}
satisfies the $J_2$-holomorphicity condition \eqref{J2-cond}.  To make $\psi$ a twistor lift to a flag manifold, we need to ensure that each leg $\psi_i$ is non-zero.
As in \cite[\S 3.4]{unitons}, say that a polynomial extended solution $\Phi$ is
 \emph{normalized} if each $A_i$ is non-zero, equivalently, each $\psi_i$ is non-zero.  It is
 shown there that, if $\Phi$ is not normalized, there is a polynomial loop $\gamma$
such that $\wt{\Phi} = \gamma^{-1}\Phi$ is a normalized polynomial extended solution of lesser degree; further, if $\Phi$ is $\nu$-invariant,
then we can choose the loop  to be $\nu$-invariant, so that $\wt{\Phi}$ is $\nu$-invariant and $\wt{\Phi}_{-1} = \pm\Phi_{-1}$\,.   Recalling the definition of the flag manifold $F_{d_0,d_1,\ldots, d_r}$ from
\S \ref{subsec:twistor-Grass}, we have the following result.
 
 \begin{theorem} \label{th:can-lift}
 Let $\Phi$ be a normalized $\nu$-invariant polynomial extended solution; denote its degree by $r$ and set $\varphi = \Phi_{-1}$.
Let the $\psi_i$ be the legs of the canonical filtration as in Proposition \ref{pr:can-alternate} and set $d_i = \rank\psi_i$.   Then
$\psi = (\psi_0, \psi_1, \ldots, \psi_r):M \to F_{d_0,d_1,\ldots, d_r}$ is a $J_2$-holomorphic lift of\/ $\varphi$.
\end{theorem}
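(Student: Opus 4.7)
The plan is to assemble the earlier machinery: the canonical $F$-filtration yields an alternating $A^{\varphi}_z$-filtration with non-zero legs, and this corresponds to a $J_2$-holomorphic moving flag whose even legs sum to $\varphi$.

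First I would recall from Example \ref{ex:can-filt} that $(Y_i)$ with $Y_i = W \cap \lambda^i\HH_+ + \lambda W$ is an $F$-filtration of $W$ of length $r$. Transporting this across the diagram \eqref{diag:F-Az-comm} via $P_0\circ\Phi^{-1}$, and using that this map intertwines $F$ with $-A^{\varphi}_z$ and $\pa_{\zbar}$ with $D^{\varphi}_{\zbar}$, the associated filtration $(Z_i)$ defined by $Z_i = P_0\circ\Phi^{-1}Y_i$ is an $A^{\varphi}_z$-filtration of $\CC^n$ of length $r$.

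Next, Proposition \ref{pr:can-alternate} (whose hypothesis of $\nu$-invariance we are granted) tells us that the legs $\psi_i = Z_i \ominus Z_{i+1}$ satisfy $\psi_i \subset (-1)^i\varphi$, i.e.\ $(Z_i)$ is alternating for $\varphi$ and $\pi_e\circ\psi = \sum_j \psi_{2j} = \varphi$. To invoke Proposition \ref{pr:J2-Az-equiv} (equivalently Lemma \ref{le:J2-Az-equiv}) and conclude $J_2$-holomorphicity of $\psi$ into the flag manifold $F_{d_0,\ldots,d_r}$, I need strictness of the filtration, i.e.\ each $\psi_i$ non-zero. This is precisely where the normalization hypothesis enters: by Remark \ref{re:Ai}, $\Phi:W/\lambda W \to \CC^n$ carries $A_i = \wh Y_i \ominus \wh Y_{i+1}$ isometrically onto $\psi_i$, and \emph{normalized} means $A_i \neq 0$ for all $i$, giving $\psi_i \neq 0$ and hence $d_i = \rank \psi_i \geq 1$ for $i = 0,1,\ldots,r$.

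Putting these together, Proposition \ref{pr:J2-Az-equiv} applied to the strict alternating $A^{\varphi}_z$-filtration $(Z_i)$ yields that $\psi = (\psi_0,\psi_1,\ldots,\psi_r)$ is a well-defined smooth map into $F_{d_0,d_1,\ldots,d_r}$ satisfying the $J_2$-holomorphicity condition \eqref{J2-cond}, with $\pi_e\circ\psi = \varphi$. There is no substantial obstacle here beyond bookkeeping: the only subtle point is verifying the index matching between the length $r$ of the canonical filtration (coming from $\lambda^{r+1}\HH_+ \subset \lambda W$) and the number of legs in the target flag manifold, which follows directly from the definition of $Y_i$ in \eqref{can-filt}.
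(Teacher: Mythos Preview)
Your proposal is correct and follows essentially the same approach as the paper: the argument is precisely the one the paper gives in the paragraph immediately preceding the theorem, namely that Proposition~\ref{pr:can-alternate} gives an alternating $A^{\varphi}_z$-filtration, Lemma~\ref{le:J2-Az-equiv} (or equivalently Proposition~\ref{pr:J2-Az-equiv} in the strict case) yields the $J_2$-holomorphicity condition, and the normalization hypothesis guarantees each $\psi_i$ is non-zero so that $\psi$ lands in the flag manifold $F_{d_0,\ldots,d_r}$.
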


We call $\psi$ the \emph{canonical (twistor) lift of $\varphi$ defined by $\Phi$},
see Example \ref{ex:can-filt2} for a calculation of $\psi$ in terms of unitons.
Note that the canonical lift of $\varphi$ depends on the choice of extended solution $\Phi$, however we have the following consequence.

\begin{corollary}
Let $\varphi:M \to G_k(\cn^n)$ be a harmonic map of finite uniton number $r$.  Then there is a $J_2$-holomorphic twistor lift
$\psi:M \to F$ of\/ $\varphi$ or $\varphi^{\perp}$ from $M$ to some flag manifold $F=F_{d_0,d_1,\ldots, d_t}$ with $t \leq \min(r+1, 2k,2n-2k,n-1)$.
\end{corollary}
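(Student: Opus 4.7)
The plan is to produce a normalized $\nu$-invariant polynomial extended solution $\wt\Phi$ whose degree realizes the claimed bound, and then invoke Theorem \ref{th:can-lift}. First, since $\varphi$ has uniton number $r$, Lemma \ref{le:Q}(ii) supplies a $\nu$-invariant polynomial extended solution $\Psi$ of degree at most $r+1$ with $\Psi_{-1} = \varphi$, which already yields the bound $t \leq r+1$. Independently, the Dong--Shen estimate $r \leq \min(2k, 2n-2k, n-1)$ together with Corollary~5.7 of \cite{unitons} produces a polynomial extended solution $\Phi$ of degree at most $\min(2k, 2n-2k, n-1)$ with $\Phi_{-1} = \pm\varphi$; I expect this can be arranged to be $\nu$-invariant without increasing the degree by re-examining the construction in \cite{unitons} in the Grassmannian case, rather than applying the generic Lemma \ref{le:Q}(ii) procedure (which may cost an extra power of $\lambda$).

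Next I would normalize the chosen extended solution using the procedure recalled just before Theorem \ref{th:can-lift}: if the extended solution is not already normalized, multiplying on the left by a suitable $\nu$-invariant polynomial loop $\gamma^{-1}$ yields a normalized $\nu$-invariant polynomial extended solution $\wt\Phi$ of no greater degree, still satisfying $\wt\Phi_{-1} = \pm\varphi$. This step preserves both $\nu$-invariance and the target being $\varphi$ or $\varphi^{\perp}$.

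Applying Theorem \ref{th:can-lift} to $\wt\Phi$ then produces the canonical $J_2$-holomorphic twistor lift $\psi = (\psi_0, \psi_1, \ldots, \psi_t):M \to F_{d_0, d_1, \ldots, d_t}$ with $t = \deg\wt\Phi$, projecting under $\pi_e$ to $\wt\Phi_{-1}$, that is, to $\varphi$ or $\varphi^{\perp}$. Taking whichever of the two constructions above gives the smaller degree then delivers $t \leq \min(r+1, 2k, 2n-2k, n-1)$.

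The main obstacle will be the sharpness step: ensuring that the low-degree extended solution supplied by Corollary~5.7 of \cite{unitons} can be chosen $\nu$-invariant without paying an extra factor of $\lambda$, since the generic fix in the proof of Lemma \ref{le:Q}(ii) increases the degree by one and would fail to give the bounds $2k$, $2n-2k$, $n-1$ in the case $r = \min(2k, 2n-2k, n-1)$. If the cited construction does not already yield $\nu$-invariance on the nose, one would refine it --- for instance by choosing the base-point normalization so that the twisting element $Q = \pi_A - \pi_A^{\perp}$ equals $\pm I$, in which case the correction factor $\pi_A + \lambda\pi_A^{\perp}$ is trivial and no degree increase occurs.
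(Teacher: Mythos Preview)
Your overall plan --- produce a normalized $\nu$-invariant polynomial extended solution of small degree and then apply Theorem \ref{th:can-lift} --- is exactly the paper's strategy. However, you have introduced an unnecessary detour by treating the two degree bounds as arising from \emph{two separate} constructions, which is what creates the ``main obstacle'' you describe.

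The paper proceeds with a single construction. Start with the $\nu$-invariant polynomial extended solution $\Phi$ of degree $r$ or $r+1$ provided by Lemma \ref{le:Q}(ii). If $\Phi$ is not already normalized, normalize it as in the paragraph before Theorem \ref{th:can-lift}; this step preserves $\nu$-invariance, sends $\Phi_{-1}$ to $\pm\varphi$, and strictly decreases the degree. The point you are missing is that \cite[Corollary 5.7]{unitons} is not an independent construction of a possibly non-$\nu$-invariant extended solution; rather, it is the statement that \emph{any} normalized $\nu$-invariant polynomial extended solution $\Psi$ with $\Psi_{-1} \in G_k(\cn^n)$ has degree at most $\min(2k,2n-2k,n-1)$. (This is forced by Proposition \ref{pr:can-alternate}: the $t+1$ legs of the canonical filtration are all nonzero and alternate between $\varphi$ and $\varphi^{\perp}$.) Hence the single normalized solution already satisfies both bounds, $t \leq r+1$ and $t \leq \min(2k,2n-2k,n-1)$, and your concern about arranging $\nu$-invariance ``without paying an extra factor of $\lambda$'' simply does not arise.
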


\begin{proof} By Lemma \ref{le:Q}, there is a $\nu$-invariant polynomial extended
solution $\Phi$ of degree $r$ or $r+1$ with $\Phi_{-1} = \varphi$. 
If $\Phi$ is not normalized, then by \cite[Corollary 5.7]{unitons}, we can
replace it by a normalized
$\nu$-invariant polynomial extended solution $\Psi$ with $\Psi_{-1} = \pm\varphi$ of
lesser degree, and that degree is at most $\min(2k,2n-2k,n-1)$. This gives a twistor lift
as in Theorem \ref{th:can-lift}.
\end{proof}

Lastly, we shall find the $F$-sequence which leads to Burstall's twistor lift --- note that this requires only that $\varphi$ be nilconformal and not necessarily of finite uniton number.

\begin{example} \label{ex:nilconf-Y}
Let $\varphi: M \to \U{n}$ be nilconformal so that $(A^{\varphi}_z)^{t+1} = 0$ for some $t \in \nn$.
Let $\Phi$ be an associated extended solution of $\varphi$ defined on an open subset
of $M$.  As shown in Lemma \ref{le:Q},
we can take this to be $\nu$-invariant with $\Phi_{-1} = \varphi$.
As usual, set $W = \Phi\H_+$\,.  Set $Y_0 = W$ and, for $i=1,2,\ldots$, set $Y_i = F(Y_{i-1}) + \lambda W$
(where we fill out zeros at points where the rank drops) so that
$Y_i = F^i(W) + \lambda W$; it follows that $Y_{t+1} = \lambda W$. 
The associated $A_z^{\varphi}$-filtration defined by \eqref{Z-from-Y}
is the filtration $Z_i = \Ima(A^{\varphi}_z)^i$
of Example \ref{ex:burstall-J2} which, for a harmonic map to a Grassmannian,
leads to Burstall's twistor lifts as explained in that example.
Now, any two associated extended solutions $\Phi$ and $\wt\Phi$ differ by a loop on their common domain: $\wt\Phi = \gamma\Phi$ for some
$\gamma \in \Omega\U{n}$.  Since multiplication by a loop commutes with $F$, these give the same filtration $(Z_i)$ on their common domain, so our construction is well defined on the whole of $M$.

When $\varphi$ has finite uniton number, the twistor lifts arising from this construction are, in general, not the same as the canonical lift discussed above.
See also Example \ref{ex:fact-images}. 
\end{example}

\section{Twistor lifts from unitons} \label{sec:twistor-unitons}
\subsection{Unitons} \label{subsec:unitons}
Let $\varphi:M \to \U n$ be a harmonic map.
Then a subbundle $\alpha$ of $\CC^n$ is said to be a \emph{uniton for $\varphi$} if it is
(i) holomorphic with respect to the Koszul--Malgrange holomorphic structure induced by $\varphi$, i.e., 
$D^{\varphi}_{\zbar}(\sigma)\in\Gamma(\alpha)$ for all $\sigma\in\Gamma(\alpha);$ and
(ii) closed under the endomorphism $A^{\varphi}_z$, i.e., 
$A^{\varphi}_z(\sigma)\in \alpha$ for all $\sigma\in \alpha$. Uhlenbeck showed \cite{uhlenbeck} that,
 if a subbundle $\alpha\subset\CC^n$ is a uniton for a harmonic map $\varphi$,
then (i) $\wt\varphi=\varphi(\pi_{\alpha}-\pi_{\alpha}^{\perp})$ is harmonic,
(ii) $\alpha^{\perp}$ is a uniton for $\wt\varphi$, and (iii) $\varphi= -\wt\varphi(\pi_{\alpha}^{\perp}-\pi_{\alpha})$.

\begin{example} Any holomorphic subbundle of\/ $(\CC^n,D^{\varphi}_{\zbar})$ contained in $\ker A^{\varphi}_z$ is a uniton for $\varphi$; we call such a uniton \emph{basic}. Any holomorphic subbundle of\/ $(\CC^n,D^{\varphi}_{\zbar})$ containing $\Ima A^{\varphi}_z$ is also a uniton, we call such a uniton \emph{antibasic}.  Note that, if\/ $\alpha$ is basic (resp.\ antibasic) for
$\varphi$, then $\alpha^{\perp}$ is antibasic (resp.\ basic) for $\wt\varphi=\varphi(\pi_{\alpha}-\pi_{\alpha}^{\perp})$.
\end{example} 

Now suppose that $\Phi$ is an extended solution associated to $\varphi$ and $\alpha$ is a subbundle of $\CC^n$, then Uhlenbeck showed that \emph{$\alpha$ is a uniton for $\varphi$ if and only if\/  $\wt\Phi=\Phi(\pi_{\alpha}+\lambda\pi_{\alpha}^{\perp})$ is an extended solution (associated to $\wt\varphi=\varphi(\pi_{\alpha}-\pi_{\alpha}^{\perp})$\,)};
therefore, we shall also say that $\alpha$ is a \emph{uniton for\/ $\Phi$}\,.

As before, let $r \in \nn$.
Let $\Phi$ be a polynomial extended solution of degree at most $r$; set $W = \Phi\H_+$\,.
By a \emph{partial uniton factorization of\/ $\Phi$} we mean a
product
\be{Phi-fact}
\Phi= \Phi_0(\pi_{\alpha_1}+\lambda\pi_{\alpha_1}^{\perp})\cdots(\pi_{\alpha_r}+\lambda\pi_{\alpha_r}^{\perp})
\ee
where (i) $\Phi_0$ is an extended solution, and (ii) writing
\be{Phi_i}
\Phi_i= \Phi_0(\pi_{\alpha_1}+\lambda\pi_{\alpha_1}^{\perp})\cdots
(\pi_{\alpha_i} +\lambda\pi_{\alpha_i}^{\perp}) \qquad (i = 1,2,\ldots, r),
\ee
each $\Phi_i$ is an extended solution, and $\alpha_i$ is a uniton for
$\Phi_{i-1}$ equivalently $\alpha_i^{\perp}$ is a uniton for $\Phi_i$.

Note that each $\varphi_i = (\Phi_i)_{-1}$ is harmonic and condition (ii) can be phrased as follows: \emph{$\alpha_i$ is a uniton for $\varphi_{i-1}$}, equivalently,
\emph{$\alpha_i^{\perp}$ is a uniton for $\varphi_i$}.
 
Work of Segal \cite{segal} implies that setting $W_i = \Phi_i\H_+$ defines an equivalence between partial
uniton factorizations \eqref{Phi-fact} and filtrations 
\be{lambda-filt}
W= W_r\subset W_{r-1}\subset\dots\subset W_0 \subset \HH_+
\ee
by extended solutions satisfying
\be{filt-conds}
\lambda W_{i-1}\subset W_i \subset W_{i-1} \qquad (i=1,2,\ldots,r).
\ee
If $\Phi_0 = I$, equivalently, $W_0 = \HH_+$, then \eqref{Phi-fact} is a \emph{uniton factorization} in the sense of Uhlenbeck \cite{uhlenbeck}.
The argument in \cite[\S 2]{unitons} extends immediately to partial factorizations to show that the unitons in \eqref{Phi-fact} are given by 
$\alpha_i=P_0\Phi_{i-1}^{\:\:-1}W_i$\,.

\subsection{$J_2$-holomorphic lifts from unitons}  \label{subsec:lifts-unitons}
Again, let $\Phi$ be a polynomial extended solution of degree at most $r$
and set $W =\Phi\H_+$\,.  Let $\varphi = \Phi_{-1}$ be the corresponding harmonic map. 
Given a (partial) uniton factorization \eqref{Phi-fact},
let $W_i = \Phi_i\H_+$ be the associated filtration and set
\be{Y-from-W}
Y_i = \lambda^i W_{r-i} + \lambda W \quad (i=0,1,\ldots, r)\,, \qquad Y_{r+1} = \lambda W.  
\ee
Then we have a filtration \eqref{Y-filt} of length $r$.  We ask under what conditions it forms an $F$-filtration (see Definition \ref{def:F-filtr}).

\begin{proposition} \label{pr:Y-from-W}
Let $\Phi$ be a polynomial extended solution of degree at most $r$ and \eqref{Phi-fact} a partial uniton factorization.

{\rm (i)} Suppose that
\be{F-basic}
F \bigl(\Gamma(W_i)\bigr) \subset \Gamma(\lambda W_{i-1}) \qquad (i=1,2,\ldots, r);
\ee
then the filtration \eqref{Y-from-W} is an $F$-filtration. 

{\rm (ii)} The condition \eqref{F-basic} holds if and only if, for each $i \in \{1,2,\ldots, r\}$, $\alpha_i$ is a basic uniton for
$\Phi_{i-1}$\,, equivalently,  $\alpha_i^{\perp}$ is an antibasic uniton for $\Phi_i$.

{\rm (iii)} Let $\alpha_1,\ldots,\alpha_r$ be the unitons in \eqref{Phi-fact}.
The composition $\pi_{\alpha_r}^{\perp} \circ \cdots \circ \pi_{\alpha_{r-i+1}}^{\perp}$ is a holomorphic endomorphism from $(\CC^n, D^{\varphi_{r-i}}_{\zbar})$ to $(\CC^n, D^{\varphi}_{\zbar})$, and
the $A_z^{\varphi}$-filtration $(Z_i)$ associated to $(Y_i)$ via \eqref{Z-from-Y} is given  by
\be{Z-from-unitons}
Z_i = \Ima(\pi_{\alpha_r}^{\perp} \circ \cdots \circ \pi_{\alpha_{r-i+1}}^{\perp})\,,
\quad
\text{equivalently},
\quad
Z_i^{\perp} = \ker(\pi_{\alpha_{r-i+1}}^{\perp} \circ \cdots \circ \pi_{\alpha_r}^{\perp})
\ee
$(i = 1,2,\ldots,r)$.
In particular, $Z_1^{\perp} = \alpha_r$ and $Z_2^{\perp} = \alpha_r + \alpha_r^{\perp} \cap \alpha_{r-1}\,.$
\end{proposition}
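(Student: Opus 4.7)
The plan is to prove (i), (ii), (iii) in sequence, leveraging the correspondence of diagram \eqref{diag:F-corr-Az} between the Grassmannian-model picture and the $\CC^n$-picture. For (i), I would check the descending-chain and $F$-conditions directly from $Y_i = \lambda^i W_{r-i} + \lambda W$. The inclusion $Y_{i+1} \subset Y_i$ is immediate from $\lambda W_{r-i-1} \subset W_{r-i}$ in \eqref{filt-conds}, and holomorphicity of each $Y_i$ is inherited from that of $W_{r-i}$ and $W$ under multiplication by $\lambda^i$ and taking sums. For the $F$-condition $F(\Gamma(Y_i)) \subset \Gamma(Y_{i+1})$, split into the two summands: on $\lambda W$ we have $F(\lambda W) = \lambda F(W) \subset \lambda W \subset Y_{i+1}$ since $W$ is an extended solution; on $\lambda^i W_{r-i}$ with $i \le r-1$, hypothesis \eqref{F-basic} applied with index $r-i \in \{1,\ldots,r\}$ gives $F(\lambda^i W_{r-i}) \subset \lambda^{i+1} W_{r-i-1} \subset Y_{i+1}$. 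The boundary case $i=r$ uses the ambient bounds $W_0 \subset \H_+$ and $\lambda^r \H_+ \subset W$ (since $\deg \Phi \le r$) to get $F(\lambda^r W_0) = \lambda^{r+1}\partial_z W_0 \subset \lambda^{r+1}\H_+ \subset \lambda W = Y_{r+1}$.

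For (ii), I would apply \eqref{diag:F-corr-Az} with $\Phi$ replaced by $\Phi_{i-1}$: the map $P_0\circ\Phi_{i-1}^{-1}\colon W_{i-1}\to\CC^n$ is surjective with kernel $\lambda W_{i-1}$, and intertwines $F$ with $-A^{\varphi_{i-1}}_z$. A direct calculation gives
\[
\Phi_{i-1}^{-1}W_i \;=\; \Phi_{i-1}^{-1}\Phi_i\H_+ \;=\; (\pi_{\alpha_i}+\lambda\pi_{\alpha_i}^\perp)\H_+,
\]
so $P_0\circ\Phi_{i-1}^{-1}$ maps $W_i$ onto $\alpha_i$ with kernel $\lambda W_{i-1}$. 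Chasing through the intertwining, $F(\Gamma(W_i))\subset\Gamma(\lambda W_{i-1})$ is equivalent to $A^{\varphi_{i-1}}_z(\alpha_i)=0$, i.e., $\alpha_i\subset\ker A^{\varphi_{i-1}}_z$, which is the definition of $\alpha_i$ being a basic uniton for $\Phi_{i-1}$. The equivalent statement that $\alpha_i^\perp$ is antibasic for $\Phi_i$ follows from Uhlenbeck's duality recorded in the first example of \S\ref{subsec:unitons}. This step is the main obstacle: the delicate point is translating the $\lambda W_{i-1}$-containment into a pointwise kernel condition on the uniton $\alpha_i$ through the identification $W_i/\lambda W_{i-1}\cong\alpha_i$.

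For (iii), I would expand $\Phi = \Phi_{r-i}\prod_{j=r-i+1}^{r}(\pi_{\alpha_j}+\lambda\pi_{\alpha_j}^\perp)$, invert, and multiply by $\lambda^i$ to clear the negative powers:
\[
\lambda^i\Phi^{-1}\Phi_{r-i} \;=\; (\lambda\pi_{\alpha_r}+\pi_{\alpha_r}^\perp)\cdots(\lambda\pi_{\alpha_{r-i+1}}+\pi_{\alpha_{r-i+1}}^\perp),
\]
a polynomial in $\lambda$ whose constant term is $\pi_{\alpha_r}^\perp\cdots\pi_{\alpha_{r-i+1}}^\perp$. Since $P_0(\lambda\H_+)=0$ and $P_0(A\H_+)=\Ima A_0$ for any $A=\sum_{k\ge 0}\lambda^k A_k$, I obtain $Z_i = P_0\Phi^{-1}Y_i = \Ima(\pi_{\alpha_r}^\perp\cdots\pi_{\alpha_{r-i+1}}^\perp)$; the equivalent kernel formulation follows from $(\Ima A)^\perp = \ker A^*$ together with self-adjointness of orthogonal projections. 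Holomorphicity of the composition as a morphism $(\CC^n, D^{\varphi_{r-i}}_{\zbar}) \to (\CC^n, D^{\varphi}_{\zbar})$ is obtained by iterating the standard fact --- implicit in Uhlenbeck's uniton construction --- that for each uniton $\alpha_j$, the projection $\pi_{\alpha_j}^\perp$ is a holomorphic map from $(\CC^n, D^{\varphi_{j-1}}_{\zbar})$ to $(\CC^n, D^{\varphi_j}_{\zbar})$.
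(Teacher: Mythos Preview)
Your proposal is correct and follows essentially the same approach as the paper's proof: part (i) is the same direct check (the paper's proof is terser but identical in content, including the boundary case $i=r$ via $\lambda^{r+1}\HH_+\subset\lambda W$); part (ii) in the paper simply cites the $F\leftrightarrow -A_z^{\varphi}$ correspondence (specifically \cite[Lemma~3.11]{unitons}), which is exactly the intertwining argument you spell out; and part (iii) is the same expansion of $\Phi^{-1}$ in the uniton factors followed by reading off the $\lambda^0$ coefficient. If anything, you supply more detail than the paper does---in particular the paper's proof does not explicitly justify the holomorphicity of $\pi_{\alpha_r}^{\perp}\circ\cdots\circ\pi_{\alpha_{r-i+1}}^{\perp}$, which you correctly attribute to iterating the single-step holomorphicity of $\pi_{\alpha_j}^{\perp}$.
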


\begin{proof} (i)
{}From \eqref{F-basic}, we deduce that, for $i \in \{0,1,\ldots, r-1\}$,
$$
F \bigl(\Gamma(Y_i)\bigr) = F \bigl(\Gamma(\lambda^i W_{r-i} + \lambda W) \bigr) \subset 
	\Gamma(\lambda^{i+1} W_{r-i-1} + \lambda W) = \Gamma(Y_{i+1}).
$$
Further, $F \bigl(\Gamma(Y_r)\bigr) \subset \Gamma(\lambda^{r+1}\HH_+ )
	\subset \Gamma(\lambda W)$.

(ii) This follows from the correspondence of the operators $F$ and
$-A^{\varphi}_z$, explained in \S \ref{subsec:ext-sol}, more precisely it is
\cite[Lemma 3.11]{unitons} applied to $\varphi = \varphi_i$\,.

(iii)  Using \eqref{Y-from-W} and noting that $P_0\circ\Phi^{-1}(\lambda W) = 0$ and filling out zeros, we have
\begin{align*}
Z_i = P_0 \circ \Phi^{-1}(Y_i)
&= P_0 (\pi_{\alpha_r} + \lambda^{-1}\pi_{\alpha_r}^{\perp}) \cdots
	(\pi_{\alpha_{r-i+1}} + \lambda^{-1}\pi_{\alpha_{r-i+1}^{\perp}})
		\Phi_{r-i}^{-1}(\lambda^i W_{r-i})
\\		
&= P_0 \lambda^i (\pi_{\alpha_r} + \lambda^{-1}\pi_{\alpha_r}^{\perp}) \cdots
	(\pi_{\alpha_{r-i+1}} + \lambda^{-1}\pi_{\alpha_{r-i+1}}^{\perp}) \HH_+
\\
&= \pi_{\alpha_r}^{\perp} \circ \cdots \circ \pi_{\alpha_{r-i+1}}^{\perp}(\CC^n).
\end{align*}
This gives the first formula of \eqref{Z-from-unitons}; taking the adjoint gives the second one.
\end{proof}

\begin{corollary} \label{co:lift-from-unitons}
Let $W = \Phi\H_+$ be a polynomial extended solution of degree $r$ and let
\eqref{Phi-fact} be a partial uniton factorization of\/ $\Phi$ with corresponding filtration
\eqref{lambda-filt} which satisfies \eqref{F-basic}.
As usual, define $Y_i$ by \eqref{Y-from-W}, set $Z_i = P_0 \circ \Phi^{-1}Y_i$
and write $\psi_i = Z_i \ominus Z_{i+1}$ \ $(i=0,1,\,\ldots, r)$.
\begin{enumerate}
\item[(i)] The $Z_i$ are given in terms of the unitons in \eqref{Phi-fact}
by \eqref{Z-from-unitons}$;$ in particular,
$\psi_0 = \alpha_r$ and $\psi_1 = \alpha_r^{\perp} \cap \alpha_{r-1}$\,.
\item[(ii)] Suppose that $\Phi$ is $\nu$-invariant and that $(Z_i)$ is a strict alternating filtration.  Set $d_i = \rank\psi_i$\,.  Then
$\psi = (\psi_0, \psi_1, \ldots, \psi_r): M \to F_{d_0,d_1,\ldots, d_r}$ is a
$J_2$-holomorphic twistor lift of\/ $\varphi = \Phi_{-1}:M \to G_*(\cn^n)$\,. 
\end{enumerate}
\end{corollary}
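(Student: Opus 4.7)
The argument breaks cleanly along the two parts. For (i), my plan is to invoke Proposition \ref{pr:Y-from-W}(iii), which already gives the stated formula \eqref{Z-from-unitons}; it then remains only to read off the top two legs. Since $Z_0 = \CC^n$ and $Z_1 = \Ima \pi_{\alpha_r}^{\perp} = \alpha_r^{\perp}$, one immediately obtains $\psi_0 = Z_0 \ominus Z_1 = \alpha_r$. For $\psi_1 = Z_1 \cap Z_2^{\perp}$, I would use the dual description $Z_2^{\perp} = \alpha_r + \alpha_r^{\perp} \cap \alpha_{r-1}$ supplied by the same proposition, and observe that intersecting this sum with $\alpha_r^{\perp}$ kills the $\alpha_r$ summand while leaving $\alpha_r^{\perp} \cap \alpha_{r-1}$ intact, giving the claim.

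For (ii), the plan is to chain three ingredients. First, the hypothesis \eqref{F-basic} together with Proposition \ref{pr:Y-from-W}(i) shows that the sequence $(Y_i)$ of \eqref{Y-from-W} is an $F$-filtration of $W$. Second, via the one-to-one correspondence between $F$-filtrations of $W$ and $A_z^{\varphi}$-filtrations of $\CC^n$ recorded in diagram \eqref{diag:F-Az-comm} and the remarks following it, the induced sequence $(Z_i)$ is automatically an $A_z^{\varphi}$-filtration; the $\nu$-invariance of $\Phi$ enters here only to ensure that $\varphi = \Phi_{-1}$ lands in the Grassmannian $G_*(\cn^n)$, so that ``alternating'' is a meaningful hypothesis. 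Third, with strictness and alternation for $\varphi$ now in hand, I would invoke Proposition \ref{pr:J2-Az-equiv}, whose conclusion is precisely that the legs $\psi_i = Z_i \ominus Z_{i+1}$ assemble into a $J_2$-holomorphic twistor lift $\psi = (\psi_0,\ldots,\psi_r): M \to F_{d_0,\ldots,d_r}$ of $\varphi$, with $d_i = \rank\psi_i$.

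I do not foresee any genuine obstacle here: the proof amounts to composing Proposition \ref{pr:Y-from-W} with Proposition \ref{pr:J2-Az-equiv}, with only the elementary linear-algebra identifications of $\psi_0$ and $\psi_1$ to add. If there is any subtlety, it is purely a matter of parity bookkeeping --- checking that the alternating condition is indeed for $\varphi$ rather than $\varphi^{\perp}$, so that one recovers a lift of $\varphi$ itself and not of its Cartan antipode; this follows directly from the definition \eqref{alternate} of ``alternating for $\varphi$''.
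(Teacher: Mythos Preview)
Your proposal is correct and is exactly the argument the paper has in mind: the corollary is stated without proof precisely because it is an immediate combination of Proposition~\ref{pr:Y-from-W} (for part (i) and for $(Y_i)$ being an $F$-filtration) with the $F$-filtration/$A_z^{\varphi}$-filtration correspondence and Proposition~\ref{pr:J2-Az-equiv} (for part (ii)). Your parity remark is also apt: the paper's convention is that ``alternating'' means ``alternating for $\varphi$ or $\varphi^{\perp}$'', so strictly speaking the conclusion of (ii) should be read as a lift of $\varphi$ or $\varphi^{\perp}$ accordingly, which is how the result is used later (e.g.\ in Theorem~\ref{th:can-lift} and Corollary~\ref{co:real-twistor-lifts}).
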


If \eqref{F-basic} does not hold, then we cannot expect the filtration \eqref{Y-from-W} to be an $F$-filtration, as shown by the following example.

\begin{example} \label{ex:segal}
Let $\Phi$ be a polynomial extended solution of degree $r$ and set $W = \Phi\H_+$\,.
Set $W_i = W + \lambda^i\H_+$\,.
Then $(W_i)$ defines a filtration \eqref{lambda-filt} satisfying
\eqref{filt-conds}, and so a uniton factorization \eqref{Phi-fact}; we call this
the \emph{Segal factorization of\/ $\Phi$ (or $W$)}, as it appears in \cite{segal}.
Defining $\Phi_i$ by \eqref{Phi_i}, we have $W_i = \Phi_i\H_+$\,.
Each $\alpha_i$ appearing in \eqref{Phi-fact} is a uniton for $\Phi_{i-1}$; we call
the $\alpha_i$ the \emph{Segal unitons of\/ $\Phi$}.

The resulting filtration \eqref{Y-from-W}
is not, in general, an $F$-filtration; indeed, the Segal unitons are not basic in general, in fact, each $\alpha_i$ is antibasic for $\Phi_{i-1}$\,.
\end{example}

However, there are lots of uniton factorizations with basic unitons to which we can apply Proposition \ref{pr:Y-from-W}(i) to give
$F$-filtrations.  We start with the factorization which gives the canonical twistor lift; then we identify the one which leads to Burstall's twistor lift.

\begin{example} \label{ex:can-filt2}
Let $\Phi$ be a polynomial extended solution of degree at most $r$ and set $W = \Phi\H_+$\,.
Set $W_i = \lambda^{r-i}W \cap \HH_+$.  Again, $(W_i)$ defines a filtration \eqref{lambda-filt} satisfying \eqref{filt-conds}, and so a uniton factorization \eqref{Phi-fact}; we call this
the \emph{Uhlenbeck factorization of\/ $\Phi$ (or $W$)}, as it appears in \cite{uhlenbeck}.
Again, defining $\Phi_i$ by \eqref{Phi_i}, we have $W_i = \Phi_i\H_+$ and
each $\alpha_i$ appearing in \eqref{Phi-fact} is a uniton for $\Phi_{i-1}$; we call
the $\alpha_i$ the \emph{Uhlenbeck unitons of\/ $\Phi$}.

This time, the filtration $(W_i)$ clearly satisfies \eqref{F-basic}, equivalently, each Uhlenbeck uniton $\alpha_i$ is basic for $\Phi_{i-1}$.
It is quickly checked that the $F$-filtration $(Y_i)$ associated to $(W_i)$ by \eqref{Y-from-W} is the \emph{canonical $F$-filtration} \eqref{can-filt} which leads to the canonical twistor lift of Theorem \ref{th:can-lift}.
\end{example}

\begin{example} \label{ex:fact-images}
Let $\Phi$ be a polynomial extended solution of degree $r$, and set $W = \Phi\H_+$\,.
For any $i \in \nn$, let $W_{(i)}$
denote the $i$th osculating space spanned by derivatives of local holomorphic sections
of $W$ up to order $i$.  Setting $W_i = W_{(r-i)}$
defines a partial uniton factorization:
\be{pa-Az-images}
W = W_{(0)} \subset W_{(1)} \subset \cdots \subset W_{(r)} \subset \H_+
\ee
which satisfies \eqref{F-basic}.
The proof in \cite[Example 4.7]{unitons} extends immediately to show that the unitons $\alpha_i$ in  
\eqref{Phi-fact} are given by $\alpha_i^{\perp} = \Ima A_z^{\varphi_i}$;
by definition, $\alpha_i^{\perp}$ is antibasic for $\varphi_i$ so $\alpha_i$ is basic
for $\varphi_{i-1}$.
Defining $Y_i$ by \eqref{Y-from-W} gives the $F$-filtration $Y_i = F(Y_{i-1}) + \lambda W
= F^i(W) + \lambda W$ of Example \ref{ex:nilconf-Y}; note that this formula automatically
gives $Y_{r+1} = \lambda W$ since $F^{r+1}(W) \subset \lambda^{r+1}\HH_+ \subset \lambda W$.
The associated filtration $(Z_i)$ defined by \eqref{Z-from-Y} is the filtration
$Z_i = \Ima(A^{\varphi}_z)^i$ of Example \ref{ex:burstall-J2} which leads to Burstall's twistor lift.

Suppose now that there is a $t \in \nn$ such that $(P_0 W)_{(t)} = \CC^n$,
equivalently, $W_{(t)} = \HH_+$; such $t$ exists if and only if\/ $P_0 W$ is full.
Then $\lambda^t\H_+ =  \lambda^t W_{(t)} \subset W$ so that $r \leq t$
and \eqref{pa-Az-images} extends to a \emph{uniton factorization}:
$$
W = W_{(0)} \subset W_{(1)} \subset \cdots \subset W_{(r)} \subset \cdots \subset W_{(t)} = \H_+;
$$
this is the \emph{factorization by $A_z$-images} of \cite{wood-unitary}.  Note, however, that
 the extra terms $W_{(i)}$ \ $(i > r)$ do not lengthen the associated
$F$-filtration since, when $i>r$, we have $\lambda^i W_{(i)} \subset \lambda W$ so that $Z_i =0$.  
\end{example} 

Let $\varphi:M \to \U n$ be a harmonic map.
Let $(Z_i)$ be an $A^{\varphi}_z$-filtration of length $t$.  Then
each subbundle $Z_i$ is a uniton for $\varphi$; further
$Z_1$ is antibasic, and $Z_t$ is basic.  We have the following converse.  In the proof, for any map $f:\CC^n \to \CC^n$
and integer $i >0$ we set $f^{0} =$ identity map and write $f^i$ for the
composition $f^i = f \circ \cdots \circ f$ \ (with $i$ factors); 
further, for any subset $V$ of $\CC^n$, we write
$f^{-i}(V) = (f^i)^{-1}(V) = \{x \in \CC^n : f^i(x) \in V\}$.

\begin{proposition} \label{pr:lift-with-uniton}
Let $\varphi:M \to \U{n}$ be a nilconformal harmonic map, and let $\alpha$ be a uniton for $\varphi$.
Then we can find a strict $A^{\varphi}_z$-filtration $(Z_i)$ with
$Z_k = \alpha$ for some $k$.  
\end{proposition}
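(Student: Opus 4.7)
The plan is to build the required filtration in two stages: a strict chain of holomorphic subbundles from $\CC^n$ down to $\alpha$, followed by a strict chain from $\alpha$ down to $\ul 0$, in both cases using iterates of $B := A^{\varphi}_z$. Since $\varphi$ is nilconformal, $B^{r+1} = 0$ for some $r$; and since $\alpha$ is a uniton, $B$ is holomorphic with respect to $D^{\varphi}_{\zbar}$ and $B(\alpha) \subset \alpha$. Both ingredients will be used throughout.

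For the piece \emph{below} $\alpha$, I would define $\alpha^{(j)} = \Ima(B^j|_{\alpha})$ for $j \geq 0$, filling out zeros to make each a subbundle as in \cite[Proposition 2.2]{burstall-wood}. Each $\alpha^{(j)}$ is a holomorphic subbundle of $\alpha$ because $B^j|_{\alpha}:\alpha \to \alpha$ is a holomorphic morphism, and clearly $\alpha^{(j)} \supset \alpha^{(j+1)}$. The key observation for strictness is: if $\alpha^{(j)} = \alpha^{(j+1)}$ for some $j$, then $B$ restricted to the bundle $\alpha^{(j)}$ is a fibrewise surjection of equal-rank bundles, hence a bundle automorphism; iterating, $B^k(\alpha^{(j)}) = \alpha^{(j)}$ for all $k$, which contradicts the nilpotency of $B$ unless $\alpha^{(j)} = \ul 0$. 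Thus the inclusions are strict until the chain reaches zero, giving
$$\alpha = \alpha^{(0)} \supsetneq \alpha^{(1)} \supsetneq \cdots \supsetneq \alpha^{(s)} \supsetneq \alpha^{(s+1)} = \ul 0.$$

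For the piece \emph{above} $\alpha$, I would define $\alpha_j = \{v \in \CC^n : B^j(v) \in \alpha\}$. This is the kernel of the holomorphic morphism from $(\CC^n,D^{\varphi}_{\zbar})$ to the quotient bundle $\CC^n/\alpha$ induced by $B^j$, so again after filling out zeros each $\alpha_j$ is a holomorphic subbundle. One has $\alpha_0 = \alpha$, $\alpha_j \subset \alpha_{j+1}$, and $\alpha_{r+1} = \CC^n$. Strictness here comes from the recursion $\alpha_{j+1} = B^{-1}(\alpha_j)$: if ever $\alpha_j = \alpha_{j+1}$ then $\alpha_{j+2} = B^{-1}(\alpha_{j+1}) = B^{-1}(\alpha_j) = \alpha_{j+1}$, and inductively the chain stabilizes at $\alpha_j$; but the chain must reach $\CC^n$, so the stabilized value is $\CC^n$. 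Letting $k$ be the smallest index with $\alpha_k = \CC^n$, we obtain
$$\alpha = \alpha_0 \subsetneq \alpha_1 \subsetneq \cdots \subsetneq \alpha_k = \CC^n.$$

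Finally I would concatenate: set $Z_i = \alpha_{k-i}$ for $0 \leq i \leq k$ and $Z_{k+j} = \alpha^{(j)}$ for $1 \leq j \leq s+1$, so that $Z_0 = \CC^n$, $Z_k = \alpha$, and $Z_{k+s+1} = \ul 0$, with all inclusions strict. The condition $B(Z_i) \subset Z_{i+1}$ is straightforward to check in each piece: for $i < k$, $v \in \alpha_{k-i}$ gives $B^{k-i-1}(Bv) = B^{k-i}(v) \in \alpha$, so $Bv \in \alpha_{k-i-1} = Z_{i+1}$; for $i \geq k$, $B(\alpha^{(i-k)}) = \alpha^{(i-k+1)} = Z_{i+1}$ by definition. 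The only genuine obstacle is the strictness of the two chains, and both cases are handled by the simple stabilization arguments above, both ultimately relying on the nilpotency of $B$; the holomorphicity and the subbundle structure come for free from the facts that $B$ is a holomorphic endomorphism (the harmonicity of $\varphi$) and that $\alpha$ is a holomorphic subbundle (a defining property of a uniton).
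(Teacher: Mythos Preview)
Your proof is correct and follows exactly the same route as the paper: writing $B=A^{\varphi}_z$, the paper sets $Z_i=B^{\,i-k}(\alpha)$ with the convention that negative powers denote preimages, so $Z_{k-j}=B^{-j}(\alpha)=\alpha_j$ and $Z_{k+j}=B^{j}(\alpha)=\alpha^{(j)}$ in your notation. The paper's argument is terse and does not spell out the strictness of the resulting chain; your stabilization arguments (that $\alpha_{j}=\alpha_{j+1}$ forces the ascending chain to have already reached $\CC^n$, and that $\alpha^{(j)}=\alpha^{(j+1)}$ together with nilpotency forces $\alpha^{(j)}=\ul 0$) are exactly the details one needs to supply, and they are correct.
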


\begin{proof} 
Choose $k \in \nn$ such that  $(A^{\varphi}_z)^{-k}(\alpha) = \CC^n$; since $A^{\varphi}_{\zbar}$ is the adjoint of $-A^{\varphi}_z$,
this is equivalent to $(A^{\varphi}_{\zbar})^k(\alpha^{\perp}) = \ul{0}$\,.  Then
set $Z_i = (A^{\varphi}_z)^{i-k}(\alpha)$; equivalently, $Z_i^{\perp} = (A^{\varphi}_{\zbar})^{k-i}(\alpha^{\perp})$.
\end{proof}

Note the duality in these formulae, cf.\  Example \ref{ex:dual}.
Note further that, if $\alpha$ is antibasic, we can take $k=1$ so that $Z_1 = \alpha$; if $\alpha$ is basic, we have $Z_k = \alpha$ and $Z_{k+1} = \ul{0}$.
In the extreme case $\alpha = \CC^n$, the formula gives the filtration $Z_i = \Ima (A^{\varphi}_z)^i$ of Example \ref{ex:burstall-J2}, and 
when $\alpha = \ul{0}$\,, it gives the dual filtration
$Z_i = \ker (A^{\varphi}_z)^{k+1-i}$ of Example \ref{ex:burstall-dual}.  

If $\varphi$ has image in a Grassmannian and $\alpha$ is a uniton for $\varphi$, then, as before, we say
that $\alpha$ \emph{splits (for $\varphi$)} if $\alpha = \alpha \cap \varphi \oplus \alpha \cap \varphi^{\perp}$.
We deduce the following from Proposition \ref{pr:lift-with-uniton}.

\begin{theorem} \label{th:lift-with-uniton}
Let $\varphi:M \to G_*(\cn^n)$ be a nilconformal harmonic map\/  from a surface to a complex Grassmannian, and let $\alpha$ be a uniton for\/ $\varphi$ which splits.
 Then there is a moving flag
$\psi = (\psi_0,\psi_1, \ldots, \psi_t)$ with the uniton given by a sum
$\sum_{j =j_0}^t\psi_j$ of legs $\psi_i$, and
a $J_2$-holomorphic twistor lift $\wt\psi = (\wt\psi_0, \wt\psi_1, \ldots, \wt\psi_s)$
of $\pm\varphi$ with each $\wt\psi$ the sum of some of the legs $\psi_i$.
\end{theorem}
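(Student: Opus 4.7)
The plan is to apply Proposition \ref{pr:lift-with-uniton} to produce a strict $A^{\varphi}_z$-filtration containing $\alpha$, argue that this filtration automatically splits for $\varphi$, and then invoke Proposition \ref{pr:J2-from-splits} to extract the desired $J_2$-holomorphic lift. Concretely, since $\varphi$ is nilconformal and $\alpha$ is a uniton for $\varphi$, Proposition \ref{pr:lift-with-uniton} yields a strict $A^{\varphi}_z$-filtration $\CC^n = Z_0 \supset Z_1 \supset \cdots \supset Z_{t+1} = \ul{0}$ with $Z_k = \alpha$ for some $k$; explicitly, $Z_i = (A^{\varphi}_z)^{i-k}(\alpha)$ for $i \geq k$ and $Z_i^{\perp} = (A^{\varphi}_{\zbar})^{k-i}(\alpha^{\perp})$ for $i \leq k$.

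The crux is to verify that this filtration splits for $\varphi$. Recall from \S \ref{subsec:Grass} that, with $\varphi$ viewed via the Cartan embedding, $A^{\varphi}_z = -(A'_{\varphi} \oplus A'_{\varphi^{\perp}})$, so $A^{\varphi}_z$ interchanges $\varphi$ and $\varphi^{\perp}$, and its adjoint $-A^{\varphi}_{\zbar}$ does the same. Hence, if a subbundle $W \subset \CC^n$ splits as $(W \cap \varphi) \oplus (W \cap \varphi^{\perp})$, then so does $A^{\varphi}_z(W)$, with the two summands lying in $\varphi^{\perp}$ and $\varphi$ respectively. Since $\alpha$ splits by hypothesis, so does $\alpha^{\perp}$ (a direct consequence of $\alpha \oplus \alpha^{\perp} = \varphi \oplus \varphi^{\perp}$ together with the splitting of $\alpha$). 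Iterating, every $Z_i$ for $i \geq k$ splits, and every $Z_i^{\perp}$ for $i \leq k$ splits; a routine check shows that if $W^{\perp}$ splits for $\varphi$ then so does $W$, so every $Z_i$ splits.

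With splitting of the $A^{\varphi}_z$-filtration $(Z_i)$ established, define $\wh\psi_{2i} = (Z_i \ominus Z_{i+1}) \cap \varphi$ and $\wh\psi_{2i+1} = (Z_i \ominus Z_{i+1}) \cap \varphi^{\perp}$ as in \eqref{Bi}, giving the moving flag $\psi = (\wh\psi_0, \wh\psi_1, \ldots, \wh\psi_{2t+1})$ in the statement. Since $\alpha = Z_k = \sum_{i \geq k}(\wh\psi_{2i} \oplus \wh\psi_{2i+1})$, the uniton is exhibited as a sum of legs starting from $j_0 = 2k$, yielding the first assertion. The second is then immediate: Proposition \ref{pr:J2-from-splits} applied to $(Z_i)$ produces a $J_2$-holomorphic twistor lift $\wt\psi$ of $\pm\varphi$ into a flag manifold, each of whose legs is a sum of some of the $\wh\psi_j$. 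The only non-formal point is the stability of splitting under $A^{\varphi}_z$ argued above, which is essentially immediate from the Cartan decomposition of $A^{\varphi}_z$; the rest is a direct assembly of results already in hand.
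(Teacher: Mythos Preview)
Your proof is correct and follows essentially the same approach as the paper: apply Proposition~\ref{pr:lift-with-uniton} to get an $A^{\varphi}_z$-filtration through $\alpha$, observe that it splits, and then invoke Proposition~\ref{pr:J2-from-splits}. The paper's own proof simply asserts that ``since $\alpha$ splits, it is clear that $(Z_i)$ splits''; your argument using the fact that $A^{\varphi}_z$ (and $A^{\varphi}_{\zbar}$) interchanges $\varphi$ and $\varphi^{\perp}$ is exactly the justification the paper leaves implicit.
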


\begin{proof}
Proposition \ref{pr:lift-with-uniton} gives an $A^{\varphi}_z$-filtration $(Z_i)$ with
one of the $Z_i$ equal to $\alpha$.
Since $\alpha$ splits, it is clear that $(Z_i)$ splits.
By Proposition \ref{pr:J2-from-splits}, we obtain a twistor lift $\psi$ as described.
\end{proof}

\subsection{$S^1$-invariant maps and superhorizontal lifts} \label{subsec:superhor} 
We now consider an important special case of the above constructions when the twistor lifts are holomorphic with respect to both the non-integrable almost complex structure $J_2$
and the integrable complex structure $J_1$ of \S \ref{subsec:twistor-Grass}. 

Let $\Phi$ be a polynomial extended solution; denote its degree by $r$.
Then $\Phi$ is called \emph{$S^1$-invariant} if $\Phi_{\lambda}\Phi_{\mu} = \Phi_{\lambda\mu}$.
This clearly implies that $\Phi$ is $\nu$-invariant so that $\varphi=\Phi_{-1}$ is a harmonic map to a Grassmannian.
Uhlenbeck showed \cite[\S 10]{uhlenbeck} that \emph{$\Phi$ is $S^1$-invariant
if and only its Uhlenbeck unitons $\gamma_1, \ldots, \gamma_r$ (Example \ref{ex:can-filt2})
are nested}: $\gamma_i \supset \gamma_{i+1}$.
In that case, the formula \eqref{Z-from-unitons} reduces to $Z_i = \gamma_{r+1-i}^{\perp}$\,.
Equally well, it follows from \cite[Proposition 3.14]{unitons} that
 $\Phi$ is $S^1$-invariant if and only if its Segal unitons $\beta_1,\ldots, \beta_r$
(Example \ref{ex:segal}) are nested:
\be{nested-seq}
\ul{0} =\beta_0 \subset \beta_1 \subset \cdots \subset \beta_r \subset \beta_{r+1} = \CC^n,
\ee
in which case $\beta_i = \gamma_{r+1-i}$\,; it follows that $\Phi$ and $W = \Phi\H_+$ are given by
\be{W-S1-invt}
\Phi = \sum_{i=0}^r \lambda^i \pi_{\psi_i}	\quad \text{and} \quad
W = \sum_{i=0}^{r-1} \lambda^i \beta_{i+1} + \lambda^r\HH_+,
\ee
giving a harmonic map 
$\varphi = \Phi_{-1}  = \sum_{k=0}^{[r/2]}\psi_{2k}$
where $\psi_i = \beta_{i+1} \ominus \beta_i$.   

A nested sequence \eqref{nested-seq} of subbundles of $\CC^n$ is called \emph{superhorizontal} (cf.\ \cite{burstall-guest}) if (i) each 
subbundle is holomorphic with respect to the standard (product) holomorphic structure, i.e., $\partial_{\zbar}$ maps sections of
$\beta_i$ into sections of $\beta_i$,
 and (ii) $\partial_z$ maps sections of $\beta_i$ into
$\beta_{i+1}$.  Then (see, for example, \cite[Proposition 3.14]{unitons}),
\emph{if\/ $\Phi$ is $S^1$-invariant, the sequence $(\beta_i)$ of its Segal unitons is superhorizontal}.

As usual, set $d_i = \rank\psi_i$; the $d_i$ are all non-zero if and only if $\Phi$
 is normalized, in which case the  canonical $J_2$-holomorphic lift
$\psi = (\psi_0,\psi_1, \ldots, \psi_r):M \to F_{d_0,\ldots, d_r}$
of $\varphi = \Phi_{-1}$ defined by $\Phi$
is given by $\psi_i = Z_i \ominus Z_{i+1} = \beta_{i+1} \ominus \beta_i$.
Superhorizontality of the sequence $(\beta_i)$ can be interpreted as saying that the derivative of $\psi$ lies in
the \emph{superhorizontal distribution},
by which we mean the subbundle of the $(1,0)$-horizontal bundle given by $\sum_{i=0}^{r-1}\Hom(\psi_i,\psi_{i+1})$, in which case
$\psi$ is horizontal and both $J_1$- and $J_2$-holomorphic.
Thus, \emph{let $\Phi$ be a normalized extended solution.  Then the canonical lift of\/ $\varphi = \Phi_{-1}$ defined by $\Phi$
is superhorizontal if and only if\/ $\Phi$ is $S^1$-invariant}.
   
\begin{remark} \label{re:isotropic}
(i) When $r=1$, superhorizontality is automatic; when $r=2$, superhorizontality is equivalent to horizontality.

(ii) Given a harmonic map $\varphi:M \to G_*(\cn^n)$, there is a superhorizontal holomorphic lift of $\varphi$ or $\varphi^{\perp}$ if and only if there
is an $S^1$-invariant polynomial extended solution $\Phi$ with $\Phi_{-1} = \pm\varphi$.  Indeed, given such a lift, $\Phi$ is given by \eqref{W-S1-invt}; conversely, after normalizing $\Phi$, the canonical lift is superhorizontal as explained above.

(iii) A harmonic map $\varphi:M \to G_*(\cn^n)$ which has a superhorizontal holomorphic lift is called \emph{isotropic} in \cite{eschenburg-tribuzy} where it is characterized geometrically, see also \cite{dorfmeister-eschenburg}.
\end{remark}

\begin{example} \label{ex:holo}
\emph{A superhorizontal sequence of length one} is just a single holomorphic subbundle
$\beta_1 \subset \CC^n$.  The corresponding extended solution \eqref{W-S1-invt} is given by
$\Phi = \pi_{\beta_1} + \lambda \pi_{\beta_1}^{\perp}$ and $W = \beta_1 + \lambda\HH_+$.
Set $d_0 = \rank \beta_1$. Then the resulting harmonic map $\varphi = \Phi_{-1}:M \to G_{d_0}(\cn^n)$ is holomorphic
and $-\Phi_{-1} = \varphi^{\perp}:M \to G_{n-d_0}(\cn^n)$ is antiholomorphic; all holomorphic and antiholomorphic maps
$M \to G_*(\cn^n)$ are obtained in this way.
The canonical lift of $\varphi$ defined by $\Phi$ is $\psi = (\psi_0, \psi_1) = (\beta_1, \beta_1^{\perp}):M \to F_{d_0,n-d_0}$.
The projection $\pi_e: F_{d_0,n-d_0} \to G_{d_0}(\cn^n)$ is given by $(\psi_0, \psi_1) \mapsto \psi_0$ and is bijective.
\end{example}

For the next examples, as before, for any $i \in \nn$ and
holomorphic map $f:M \to G_*(\cn^n)$, equivalently, holomorphic subbundle of $\CC^n$, we denote by $f_{(i)}$
 the $i$th osculating space spanned by derivatives of local holomorphic sections
of $f$ up to order $i$; we set $f_{(-1)} = 0$. 

\begin{example} \label{ex:r2}
(i) A \emph{superhorizontal sequence of length $2$} is a nested pair $\beta_1 \subset \beta_2$ of holomorphic subbundles of $\CC^n$ with $\pa_z\bigl(\Gamma(\beta_1)\bigr) \subset \Gamma(\beta_2)$; such a pair is called a \emph{$\pa'$-pair} in \cite{erdem-wood}.
Equivalently, $(\beta_1, \beta_2^{\perp})$ is a \emph{mixed pair} in the sense of
\cite{burstall-wood} (generalized to subbundles of arbitrary rank), i.e., $\beta_1$  is a holomorphic subbundle of $\CC^n$, $\beta_2^{\perp}$ is an antiholomorphic one, and $\pa_z\bigl(\Gamma(\beta_1)\bigr)$ has values perpendicular to
$\beta_2^{\perp}$.
  The corresponding extended solutions $\Phi$ and $W=\Phi\H_+$ of
\eqref{W-S1-invt} are given by
\be{r2}
\Phi = \pi_{\beta_1} + \lambda\pi_{\varphi} + \lambda^2\pi_{\beta_2}^{\perp}
\quad \text{and} \quad
W = \beta_1 + \lambda\beta_2 + \lambda^2\HH_+\,.
\ee 
The resulting harmonic map $\varphi = \Phi_{-1}$ is given by $\varphi = \beta_1 \oplus \beta_2^{\perp}$; it is also called a
\emph{mixed pair}.  Its orthogonal complement is the harmonic map
$\varphi^{\perp} = \beta_2 \ominus \beta_1$; 
this is \emph{strongly isotropic}
\cite[(1.6)]{erdem-wood} in the sense that the Gauss transforms
(see \S \ref{subsec:Grass})
$G^{(i)}(\varphi^{\perp})$ and $G^{(j)}(\varphi^{\perp})$ are orthogonal for all integers $i \neq j$.
All strongly isotropic harmonic maps $M \to G_*(\cn^n)$ are obtained from a $\pa'$-pair $\beta_1 \subset \beta_2$ in this way \cite[\S 4]{erdem-wood}; indeed,
we may take $\beta_1 = \sum_{i <0} G^{(i)}(\varphi^{\perp})$ and
$\beta_2^{\perp} = \sum_{i >0} G^{(i)}(\varphi^{\perp})$.
The canonical lift of $\varphi$ defined by the extended solution \eqref{r2} is given by
$\psi = (\psi_0,\psi_1,\psi_2) = (\beta_1,\varphi^{\perp},\beta_2^{\perp}): M \to F_{d_0,d_1,d_2}$ where $d_i = \rank\psi_i$.
This is, of course, superhorizontal.

Note that a strongly isotropic map is certainly strongly conformal,
and the lift $\psi$ just defined
is of the type described in Example \ref{ex:str-conf}(iii).  For a map
$\varphi:M \to \CP{n-1} = G_1(\cn^n)$, the notion of strong isotropy reduces to
the notion of (complex) isotropy as used in \cite{eells-wood}.

(ii) Let $f:M \to \CP{n-1}$ be a full holomorphic map and let
$i \in \{0,1,2,\ldots, n-1\}$.  
Setting $\beta_1 = f_{(i-1)}$ and $\beta_2 = f_{(i)}$ gives a $\pa'$-pair
$\beta_1 \subset \beta_2$,
and so a full isotropic harmonic map $\varphi^{\perp}:M \to \CP{n-1}$ given by
$\varphi^{\perp} = G^{(i)}(f)$.  All isotropic harmonic maps $M \to \CP{n-1}$ are given
this way and so all harmonic maps from $S^2$ to $\CP{n-1}$, see \cite{eells-wood};  holomorphic and antiholomorphic maps are given by the extreme cases $i=0$ and $i=n-1$, respectively. Excluding those cases, as in part (i),
the canonical lift of $\varphi$ defined by the extended solution \eqref{r2} is
$\psi = (\beta_1,\varphi^{\perp},\beta_2^{\perp})$; this is a superhorizontal holomorphic lift of $\varphi$.
Note that $\varphi^{\perp}$ is strongly conformal, but $\varphi$ is not. In fact
$\varphi^{\perp}:M \to \CP{n-1}$ can have no twistor lift $\psi$ to a flag manifold; indeed,
such a twistor lift $\psi = (\psi_0,\psi_1,\psi_2, \ldots)$ would have to have at least three legs, but then $\varphi^{\perp} = \pi \circ \psi$
would contain $\psi_0 \oplus \psi_2$, which has rank at least two.

(iii) Let $f:M \to \CP{n-1}$ be a full holomorphic map and let $i \in \{2,3, \ldots n-3\}$.
Setting $\beta_1 = f_{(i-2)}$ and $\beta_2 = f_{(i)}$ gives a $\pa'$-pair
$\beta_1 \subset \beta_2$ and so a strongly isotropic harmonic map
$\varphi^{\perp} = G^{(i-1)}(f) \oplus G^{(i)}(f)$; such a harmonic map is called a \emph{Frenet pair} \cite{burstall-wood}.
The canonical  lift of $\varphi$ defined by \eqref{W-S1-invt} is again $\psi = (\beta_1,\varphi^{\perp},\beta_2^{\perp})$.

In contrast to part (ii),  $\varphi$ \emph{is} strongly conformal, and
Example \ref{ex:str-conf}(iv) provides the unique $J_2$-holomorphic lift of
$\varphi^{\perp}$ with three legs:
$\psi = \bigl(G''(\varphi), \varphi, G'(\varphi)\bigr)
= \bigl(G^{(i)}(f), \varphi, G^{(i-1)}(f)\bigr)$.
\end{example}

\section{Twistor lifts of maps of surfaces to real Grassmannians and $\O{2m}/\U{m}$} \label{sec:real}
\subsection{Twistor spaces} \label{subsec:twistor-real}
We now consider the symmetric spaces of the \emph{orthogonal group} $\O{n}$ and
its identity component, the \emph{special orthogonal group} $\SO{n}$.
We think of $\O{n}$ as the totally geodesic submanifold
$\{g \in \U{n} : g = \ov g\}$ of $\U{n}$.   For each $k$, 
the \emph{real Grassmannian} $G_k(\rn^n) = \O{n}\big/ \O{k} \times \O{n-k}$
is a symmetric space; it may be thought of as the  totally geodesic submanifold
$\{ V \in G_k(\cn^n): V = \ov{V}\}$ of $G_k(\cn^n)$, 
and, via the Cartan embedding \eqref{cartan}, as a totally geodesic submanifold
of the orthogonal group $\O{n}$, and so also of $\U{n}$.  
We now identify twistor spaces for the real Grassmannian as subspaces of those for $G_k(\cn^n)$.

Let $n$, $d_0,d_1,\ldots, d_s$ be positive integers with $d_s + 2\sum_{i=0}^{s-1}d_i = n$.
Set $d_i = d_{2s-i}$ for $i = s+1, \ldots, 2s$ so that $\sum_{i=0}^{2s} d_i = n$,
and define a submanifold of the flag manifold $F_{d_0,\ldots, d_{2s}}$ of
\S \ref{subsec:twistor-Grass} by
$$
F^{\rn}_{d_0,\ldots, d_s} = \bigl\{\psi = (\psi_0,\psi_1, \ldots, \psi_{2s}) \in F_{d_0,\ldots, d_{2s}} : \psi_i = \ov\psi_{2s-i} \ \forall i \bigr\}
$$
(here, by $\ov\psi_{2s-i}$ we mean the complex conjugate  $\ov{\psi_{2s-i}}$).  Note that the middle leg $\psi_s$ is real, i.e.,
$\ov\psi_s = \psi_s$\,.  Further note that $F^{\rn}_{d_0,\ldots, d_s}$ is a complex submanifold of  $F_{d_0,\ldots, d_{2s}}$ with respect to the complex structures
$J_1$ and $J_2$.
Hence the twistor fibration \eqref{twistor-proj} restricts to a twistor fibration $\pi_e^{\rn}:F^{\rn}_{d_0,\ldots, d_s} \to G_k(\rn^n)$
where, as before, $k = \sum_{j=0}^s d_{2j}$ and  $\pi_e^{\rn}(\psi) = \sum_{j=0}^s \psi_{2j}$.

On using $\psi_i = \ov\psi_{2s-i}$, these can be written in terms of just $(\psi_0,\psi_1,\ldots, \psi_s)$ as follows.
\be{k}
\left\{ \begin{array}{rl}
k = 2\sum_{k=0}^{s/2-1}d_{2k} + d_s \,, \qquad
&\pi_e^{\rn}(\psi) = \sum_{k=0}^{s/2-1}(\psi_{2k}\oplus\ov\psi_{2k}) \oplus \psi_s \qquad \text{($s$ even)};
\\[1ex]
k = 2\sum_{j=0}^{(s-1)/2}\!\!d_{2j}\,, \qquad
&\pi_e^{\rn}(\psi) = \sum_{j=0}^{(s-1)/2}\!(\psi_{2j}\oplus\ov\psi_{2j}) \qquad \text{($s$ odd)}.
\end{array} \right.
\ee

Note that \emph{if $s$ is even, $n-k$ is even and if $s$ is odd, $k$ is even}; further, from
\eqref{k}, we have $s \leq \min(k-1,n-k)$.

As a homogeneous space,
$F^{\rn}_{d_0,\ldots, d_s} = \O{n}/H$ where $H = \U{d_0} \times \cdots \times \U{d_{s-1}} \times \O{d_s}$.
Write $H = H_1 \times H_2$ where $H_1 = \bigl\{ \prod_{j=0}^{s/2-1}\U{d_{2j}} \bigr\} \times \O{d_s}$ if $s$ is even and
$\Pi_{j=0}^{(s-1)/2}\U{d_{2j}}$ if $s$ is odd.  Then
the projection $\pi_e^{\rn}$ is the homogeneous projection
$\O{n}/H \to \O{n} \bigl/\O{k} \times \O{n-k}$ induced by the inclusion of $H = H_1 \times H_2$  in $\O{k} \times \O{n-k}$ given by the canonical inclusions of $H_1$ in $\O{k}$ and $H_2$ in $\O{n-k}$.

We can also write $F^{\rn}_{d_0,\ldots, d_s} = \SO{n}/\wt H$ where
$\wt H = \U{d_0} \times \cdots \times \U{d_{s-1}} \times \SO{d_s}$.
We have a double covering $\wt G_k(\rn^n) \to G_k(\rn^n)$ by the Grassmannian
$\wt G_k(\rn^n)$ of \emph{oriented}
$k$-dimensional subspaces of $\rn^n$ which forgets orientation; the projection $\pi_e^{\rn}$ lifts to a projection
$\wt\pi_e^{\rn}: F^{\rn}_{d_0,\ldots, d_s}
= \SO{n}/\wt H  \to \SO{n} \bigl/ \SO{k} \times \SO{n-k} = \wt G_k(\rn^n)$, 
providing twistor spaces for $\wt G_k(\rn^n)$.  Composing the double covering $\wt G_k(\rn^n) \to G_k(\rn^n)$
with the inclusion
map $G_k(\rn^n) \hookrightarrow G_k(\cn^n)$ gives a canonical totally geodesic isometric immersion of $\wt G_k(\rn^n)$ into $G_k(\cn^n)$.
 
 \begin{example}
For any $m \in \{1,2,\ldots,\}$, the mapping $(V,Y, \ov V) \mapsto V$ identifies the space
$F_{m,1}^{\rn}=\O{2m+1} \big/\U m\times\O 1\cong\SO{2m+1}/\U m$ with the space of isotropic subspaces of $\cn^{2m+1}$ of dimension $m$; the bundle $\pi_e^{\rn}: F_{m,1}^{\rn} \to \RP{2m}$ with $\pi_e(V,Y, \ov V) = Y$ can be identified with the bundle
$Z \to \RP{2m}$ of almost Hermitian structures (cf.\ Remark \ref{rem:aH-strs}(ii)), and it lifts to a fibre bundle
$\wt\pi_e^{\rn}:F_{m,1}^{\rn} \to S^{2m}$ which is the bundle $Z^+ \to S^{2m}$ of \emph{positive} almost Hermitian structures on $S^{2m}$.   In particular, 
the double covering $\Sp 2\to\SO 5$ maps $\U 1\times\Sp 1$ to $\U 2$,  showing that $F_{2,1}^{\rn}\cong\Sp 2/\U 1\times\Sp 1\cong\CP{3}$, hence the fibration $F_{2,1}^{\rn} \to S^4$ is the classical twistor fibration  $\CP{3} \to S^4$.
\end{example}

The orthogonal group has another symmetric space, $\O{2m} \big/ \U{m}$, the space  of \emph{orthogonal complex structures}; this has identity component, $\SO{2m} \big/ \U{m}$,
\emph{the space of positive orthogonal complex structures}.
By mapping a complex structure to its $(-\ii)$-eigenspace,
$\O{2m} \big/ \U{m}$ may be identified with the totally geodesic submanifold
$S = \{ V \in G_m(\cn^{2m}) : V = \ov{V}^{\perp}\}$.   This in turn may be identified 
via the Cartan embedding, with a totally geodesic submanifold of
$\ii\O{n}$ where $\ii\O{n}$ denotes the totally geodesic submanifold
$\{ \ii g: g \in \O{n}\} = \{g \in \U{n}: g = -\ov{g}\}$ of $\U{n}$.

We obtain twistor spaces for $\O{2m} \big/ \U{m}$ as restrictions of those for
$G_m(\cn^{2m})$ as follows.
Let $m, d_0,d_1, \ldots,d_s$ be positive integers with $m=d_0+\dots+d_s$ and set $d_i=d_{2s+1-i}$ for $i=s+1,\dots,2s+1$. Let $\Z^{\rn}_{d_0, \ldots,d_s}$ be the submanifold of $F_{d_0, \ldots,d_{2s+1}}$ given by 
$$
\Z^{\rn}_{d_0, \ldots,d_s}=\big\{\psi=(\psi_0,\psi_1, \ldots,\psi_{2s+1})\in F_{d_0, \ldots,d_{2s+1}}\ :\ \psi_i=\ov\psi_{2s+1-i}\ \forall i\big\}.
$$
This is a complex submanifold with respect to the almost complex structures $J_1$ and $J_2$ of $F_{d_0, \ldots,d_{2s+1}}$, so the projection \eqref{twistor-proj} restricts to
a twistor fibration $\pi_e^{\rn}:\Z^{\rn}_{d_0, \ldots,d_s}\to\O{2m}/\U{m}$; indeed, writing $\varphi = \pi_e(\psi)$, then for any
$\psi \in \Z^{\rn}_{d_0, \ldots,d_s}$, we see that $\varphi = \sum_i\psi_{2i} = \ov\varphi^{\perp}$, so $\varphi \in S \cong \O{2m} \big/ \U{m}$.

As a homogeneous space, $\Z^{\rn}_{d_0, \ldots,d_s}=\O{2m} \big/\U{d_0}\times\dots\times\U{d_s}$,
 and $\pi_e^{\rn}$ is the homogeneous projection 
$\O{2m} \big/\U{d_0}\times\dots\times\U{d_s} \to \O{2m}/\U{m}$ given by
the canonical inclusion of the product $\U{d_0}\times\dots\times\U{d_s}$ in $\U{m}$.
This restricts to a twistor fibration $\wt\pi_e^{\rn}:\SO{2m} \big/\U{d_0}\times\dots\times\U{d_s} \to \SO{2m}/\U{m}$.

\subsection{Some involutions} \label{subsec:involutions}

We describe some involutions on our various types of filtrations; real cases will then appear as their fixed points.  For a map $\varphi: M \to \U n$, $\ov\varphi$ will denote its complex conjugate; thus $\varphi = \ov\varphi$ (resp.\ $\varphi = -\ov\varphi$) if and only if $\varphi$ is real, i.e., has image in $\O n$ (resp.\ $\varphi$ has image in $\ii\O{n}$, equivalently $\ii\varphi$ is real).

\begin{lemma} \label{le:duality-Az}
  Let $\varphi:M \to \U n$ be a nilconformal harmonic map.

{\rm (i)}  Let $(Z_i)$ be an $A_z^{\varphi}$-filtration of length $t$; denote its legs by $\psi_i = Z_i \ominus Z_{i+1}$. Set
\be{inv-Z}
\wt Z_i= \ov Z_{t+1-i}^{\perp} \quad (i=0,1,\ldots,t+1).
\ee
Then $(\wt Z_i)$ is an $A_z^{\ov\varphi}$-filtration of the same length with legs
$\wt\psi_i = \wt Z_i \ominus \wt Z_{i+1}$ given by $\wt\psi_i = \ov\psi_{t-i}$\,.

Further, if $\varphi:M \to G_*(\cn^n)$ and $(Z_i)$ is split (resp.\ alternating) for $\varphi$, then so is $(\wt Z_i)$.

{\rm (ii)} If\/ $\varphi$ or\/ $\ii\varphi$ is real, then $(Z_i) \mapsto (\wt Z_i)$ defines
an involution on the set of\/ $A_z^{\varphi}$-filtrations.
\end{lemma}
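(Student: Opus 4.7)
The plan is to exploit two elementary dualities relating $A^{\varphi}_z$ and $D^{\varphi}_{\zbar}$ to their conjugate and adjoint versions. First, termwise conjugation of \eqref{type-decomp} gives $\ov{A^{\varphi}_z} = A^{\ov\varphi}_{\zbar}$ and $\ov{D^{\varphi}_{\zbar}} = D^{\ov\varphi}_z$. Second, since $A^{\varphi}$ is $\u n$-valued one has $A^{\varphi}_{\zbar} = -(A^{\varphi}_z)^*$, and since $D^{\varphi}$ is a unitary connection, $\pa_{\zbar}\ip{s}{t} = \ip{D^{\varphi}_{\zbar}s}{t} + \ip{s}{D^{\varphi}_z t}$. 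Intuitively, conjugation exchanges holomorphic with antiholomorphic structures across $\varphi\leftrightarrow\ov\varphi$, whereas taking orthogonal complement does the same exchange for a fixed $\varphi$; composing the two is exactly what will produce an involution.

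For part (i), I would first note that $\ov{(\,)}$ preserves containments while $(\,)^{\perp}$ reverses them, so $\CC^n = \wt Z_0 \supset \wt Z_1 \supset \cdots \supset \wt Z_{t+1} = \ul 0$ is immediate. For condition (i) of Definition \ref{def:Az-filt}, the conjugation identity shows $\ov{Z_{t+1-i}}$ is closed under $D^{\ov\varphi}_z$, and then the unitary-connection identity forces its orthogonal complement $\wt Z_i$ to be closed under $D^{\ov\varphi}_{\zbar}$. For condition (ii), the relation $(A^{\varphi}_z)^* = -A^{\varphi}_{\zbar}$ dualises $A^{\varphi}_z(Z_{t-i})\subset Z_{t+1-i}$ to $A^{\varphi}_{\zbar}(Z_{t+1-i}^{\perp})\subset Z_{t-i}^{\perp}$, and conjugating delivers $A^{\ov\varphi}_z(\wt Z_i)\subset \wt Z_{i+1}$. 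The leg identity is then almost tautological:
$$
\wt\psi_i = \ov{Z_{t-i}} \cap \ov{Z_{t+1-i}}^{\perp} = \ov{Z_{t-i}\cap Z_{t+1-i}^{\perp}} = \ov{\psi_{t-i}},
$$
using that complex conjugation commutes with intersection and with orthogonal complement.

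The split and alternating statements are then bookkeeping. If $Z_{t+1-i}$ splits as $U\oplus V$ with $U\subset\varphi$ and $V\subset\varphi^{\perp}$, then $\ov{Z_{t+1-i}} = \ov U\oplus\ov V$ with $\ov U\subset\ov\varphi$ and $\ov V\subset\ov\varphi^{\perp}$, and a short direct calculation (any $u+v\in\wt Z_i$ with $u\in\ov\varphi,v\in\ov\varphi^{\perp}$ is perpendicular to $\ov U\oplus\ov V$ iff $u\perp\ov U$ and $v\perp\ov V$) gives $\wt Z_i = (\ov U^{\perp}\cap\ov\varphi)\oplus(\ov V^{\perp}\cap\ov\varphi^{\perp})$, so $\wt Z_i$ splits for $\ov\varphi$. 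For alternating, $\psi_{t-i}\subset(-1)^{t-i}\varphi$ gives $\wt\psi_i = \ov{\psi_{t-i}}\subset(-1)^{t-i}\ov\varphi$, which is alternating for $\ov\varphi$ when $t$ is even and for $\ov\varphi^{\perp}$ when $t$ is odd---in either case alternating in the sense following \eqref{alternate}.

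For part (ii), the hypothesis $\varphi=\ov\varphi$ or $\varphi=-\ov\varphi$, substituted into \eqref{type-decomp} (using $\varphi^{-1}=\varphi$ in the first case and $(-\varphi)^{-1}=-\varphi$ in the second), forces $A^{\ov\varphi}_z=A^{\varphi}_z$ and $D^{\ov\varphi}_{\zbar}=D^{\varphi}_{\zbar}$, so an $A^{\ov\varphi}_z$-filtration is literally an $A^{\varphi}_z$-filtration and the map of part (i) sends this set to itself. Involutivity follows from $\wt{\wt Z}_i = \ov{\ov{Z_i}^{\perp}}^{\perp} = Z_i$, since $\ov{\cdot}$ commutes with $(\cdot)^{\perp}$ and each squares to the identity. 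The main obstacle I foresee is the sign-and-parity bookkeeping in the alternating case; once the two duality identities above are recorded, no individual step is deep.
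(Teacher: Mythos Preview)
Your approach is correct and essentially identical to the paper's: both use the adjoint identity $(A^{\varphi}_z)^* = -A^{\varphi}_{\zbar}$ together with conjugation $\ov{A^{\varphi}_{\zbar}} = A^{\ov\varphi}_z$ to pass from the $Z_i$ to the $\wt Z_i$, and the unitary-connection identity for the holomorphicity condition; the paper simply writes ``the rest is clear'' where you spell out the legs, the split/alternating check, and part (ii). One small slip: your parenthetical ``using $\varphi^{-1}=\varphi$'' is not needed and not true in general---the identity $A^{\ov\varphi}_z = A^{\varphi}_z$ follows directly from $\ov\varphi = \pm\varphi$ since $(\pm\varphi)^{-1}\pa_z(\pm\varphi) = \varphi^{-1}\varphi_z$, with no appeal to $\varphi$ being an involution.
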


\begin{proof}
Since the adjoint of $A_z^{\varphi}$ is $-A_{\zbar}^{\varphi}$, the condition $A_z^{\varphi}(Z_i) \subset Z_{i+1}$ is equivalent to
$A_{\zbar}^{\varphi}(Z_{i+1}^{\perp}) \subset Z_i^{\perp}$, and this is equivalent to $A_z^{\ov\varphi}(\wt Z_{t-i}) \subset \wt Z_{t+1-i}$\,.
Similarly $D^{\varphi}_{\zbar}\bigl(\Gamma(Z_i)\bigr) \subset \Gamma(Z_i)$ is equivalent to  $D^{\varphi}_{\zbar}\bigl(\Gamma(\wt Z_{t+1-i})\bigr) \subset \Gamma(\wt Z_{t+1-i})$\,;
the rest is clear.
\end{proof}

\begin{example} \label{ex:dual}
(i) If\/ $Z_i = \Ima (A^{\varphi}_z)^i$ as in Example \ref{ex:burstall-J2}, then
$\wt Z_i = \ker (A^{\ov\varphi}_z)^{t+1-i}$.
Replacing $\varphi$ by $\ov\varphi$ gives Example \ref{ex:burstall-dual}.  A similar process gives Example \ref{ex:str-conf}(ii) from
Example \ref{ex:str-conf}(i). 

(ii) An equivalent conclusion to part (i) of the lemma is that $\wh Z_i = Z_{t+1-i}^{\perp} $ defines an $A_z^{\varphi}$-filtration
\emph{with respect to the conjugate complex structure on $M$}.
\end{example}

We have a corresponding involution of $F$-filtrations as follows.
For a map $\Phi:M \to \Omega\U{n}$, set $W = \Phi\H_+$ and $\varphi = \Phi_{-1}$\,.
For a fixed integer $r$, write $W^I = \lambda^{r-1}\ov W^{\perp}$, then $W^I= \wt\Phi\H_+$ where $\wt\Phi = \lambda^r\ov\Phi$ (cf.\ \cite[Remark 2.7]{unitons}).
If $W$ is an extended solution, it is easily checked that $W^I$ is also an extended solution.  Writing $\wt\varphi = \wt\Phi_{-1}$, we have $\wt\varphi = (-1)^r\varphi$.

\begin{definition} \label{def:real}   Let $r \in \zn$.  We call $W$ or $\Phi$
\emph{real of degree $r$} if\/ $W^I = W$, equivalently $\Phi = \lambda^r \ov\Phi$. 
\end{definition}

 If $r=2s$ is even, this says that $\lambda^{-s}\Phi$ has values in $\Omega\O n$, and implies that $\varphi$ is real, i.e., has values in
$\O n$.  When $r$ is odd, it implies that $\ii\varphi$ is real, see \S \ref{subsec:OCS}
for the application of that case. 
 
\begin{lemma}\label{le:inv-Y}
Let $\Phi:M \to \Omega\U n$ be a polynomial extended solution of degree $r$
with $\Phi_{-1} = \varphi$.  Set $W=\Phi\H_+$\,.
Let $(Y_i)$ be an $F$-filtration of\/ $W;$ denote its length by $t$.  Set
\be{inv-Y}
\wt Y_i =  \lambda^r  \ov Y_{t+1-i}^{\perp} \quad (i=0,1,\ldots,t+1).
\ee
Then, with the projection $P_0$ defined as in \eqref{fourier-coeffs},
\begin{enumerate}
\item[(i)] $(\wt Y_i)$ is an $F$-filtration of\/ $W^I;$\\[-2ex]

\item[(ii)] $\wt{\wt Y}_i = Y_i$ for all $i;$

\item[(iii)] set $Z_i = P_0\circ\Phi^{-1}(Y_i)$ and
$\wt Z_i = P_0\circ\wt\Phi^{-1}(\wt Y_i)$. Then $Z_i$ and
$\wt Z_i$ are related by \eqref{inv-Z}$;$

\item[(iv)]  if\/ $\Phi$ is real of degree $r$, then $(Y_i)\mapsto(\wt Y_i)$ defines an involution on the set of\/ $F$-filtrations of\/ $W$. 
\end{enumerate}
\end{lemma}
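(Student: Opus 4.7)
The plan is a direct verification; all four parts reduce to how pointwise complex conjugation $c_1: v \mapsto \ov v$ on $\H = L^2(S^1, \cn^n)$ interacts with the existing structure. I will use the following facts repeatedly: $\ov{\lambda v} = \lambda^{-1}\ov v$ (since $\ov\lambda = \lambda^{-1}$ on $S^1$); $c_1$ is a conjugate-linear isometry of $\H$ commuting with orthogonal complementation and with multiplication by any $\lambda^k;$ and on sections over $M$, $\pa_z\ov\sigma = \ov{\pa_{\zbar}\sigma},$ so in particular $\ov{F\sigma} = \lambda^{-1}\pa_{\zbar}\ov\sigma.$

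For (i) I will check each of the defining conditions for $(\wt Y_i)$ in turn. The endpoints are immediate: $\wt Y_0 = \lambda^r\ov{\lambda W}^{\perp} = \lambda^r(\lambda^{-1}\ov W)^{\perp} = \lambda^{r-1}\ov W^{\perp} = W^I$ and $\wt Y_{t+1} = \lambda^r\ov W^{\perp} = \lambda W^I.$ The filtration property holds because $\ov{(\,\cdot\,)},$ $(\,\cdot\,)^{\perp}$ and the reindexing $i \mapsto t+1-i$ each reverse inclusions, for a single net reversal. Holomorphicity of $\wt Y_i$ follows because $Y_{t+1-i}$ holomorphic makes $\ov Y_{t+1-i}$ antiholomorphic (closed under $\pa_z$), and a standard Leibniz argument on the sesquilinear $L^2$ inner product then forces $\ov Y_{t+1-i}^{\perp}$ to be $\pa_{\zbar}$-closed. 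The nontrivial step is $F$-closure $F(\wt Y_i) \subset \wt Y_{i+1}.$ Writing $\tau = \lambda^r\tau' \in \wt Y_i$ with $\tau' \in \ov Y_{t+1-i}^{\perp},$ this reduces to showing $\ip{\pa_z \tau'}{\lambda^{-1}\rho} = 0$ for each $\rho \in \ov Y_{t-i}.$ Setting $\sigma = \lambda^{-1}\rho$ and applying Leibniz to $\ip{\tau'}{\sigma}$ gives $\ip{\pa_z\tau'}{\sigma} = \pa_z\ip{\tau'}{\sigma} - \ip{\tau'}{\pa_{\zbar}\sigma},$ and both pieces vanish: the first because $\sigma \in \ov{\lambda Y_{t-i}} \subset \ov Y_{t+1-i}$ (using the containment $\lambda Y_{t-i} \subset \lambda W = Y_{t+1} \subset Y_{t+1-i}$), so $\ip{\tau'}{\sigma} \equiv 0;$ the second because, writing $\rho = \ov{\rho'}$ with $\rho' \in Y_{t-i},$ the condition $F(Y_{t-i}) \subset Y_{t+1-i}$ gives $\pa_z\rho' \in \lambda^{-1}Y_{t+1-i}$ and hence $\pa_{\zbar}\sigma = \lambda^{-1}\ov{\pa_z\rho'} \in \ov Y_{t+1-i}.$

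Parts (ii) and (iv) are formal. For (ii) I compute $\wt{\wt Y}_i = \lambda^r\ov{\lambda^r\ov Y_i^{\perp}}^{\perp} = \lambda^r \cdot \lambda^{-r}\bigl((\ov{\ov Y_i})^{\perp}\bigr)^{\perp} = Y_i,$ using $\ov{\lambda^r V} = \lambda^{-r}\ov V$, $\ov{\ov V} = V$ and $(V^{\perp})^{\perp}=V.$ Part (iv) is then immediate: if $W^I = W$ the construction of (i) gives an endo-map of the set of $F$-filtrations of $W,$ and (ii) says it squares to the identity. For (iii) I will use the decomposition $\Phi^{-1}Y_{t+1-i} = \lambda\H_+ \oplus Z_{t+1-i}$ forced by $\lambda\H_+ \subset \Phi^{-1}Y_{t+1-i} \subset \H_+$ together with $\H_+ = \cn^n \oplus \lambda\H_+.$ Conjugating and using $\ov{\lambda\H_+} = \H_+^{\perp}$ gives $\ov{\Phi^{-1}Y_{t+1-i}} = \H_+^{\perp} \oplus \ov Z_{t+1-i},$ whose $\H$-orthogonal complement is $\ov Z_{t+1-i}^{\perp} \oplus \lambda\H_+$ (the $^{\perp}$ on the first summand being taken inside $\cn^n$). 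Since $\wt\Phi = \lambda^r\ov\Phi$ and $\ov\Phi^{-1}$ is unitary, $\wt\Phi^{-1}(\wt Y_i) = \ov\Phi^{-1}(\ov Y_{t+1-i}^{\perp}) = (\ov{\Phi^{-1}Y_{t+1-i}})^{\perp};$ applying $P_0$ annihilates the $\lambda\H_+$-summand and leaves $\ov Z_{t+1-i}^{\perp},$ as required.

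The main obstacle will be the $F$-closure step in (i). Since $F = \lambda\pa_z$ is a differential operator, the relation between $F$-closure of $Y_{t+1-i}$ and of $\ov Y_{t+1-i}^{\perp}$ is not purely formal and must be pushed through via Leibniz; crucially, the argument uses \emph{both} the filtration containment $\lambda W \subset Y_{t+1-i}$ \emph{and} the original $F$-condition on $(Y_i),$ together with careful bookkeeping of the $\lambda$-powers produced by conjugation.
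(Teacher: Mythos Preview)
Your proof is correct and follows essentially the same approach as the paper's: the endpoint and involution checks match exactly, and your computation for (iii) is the same ``work modulo $\lambda\H_+$'' argument the paper gives, just written as an explicit orthogonal decomposition rather than via $+\lambda\HH_+$. For (i), the paper simply asserts that $F(\Gamma(Y_i)) \subset \Gamma(Y_{i+1})$ is ``clearly equivalent'' to $F(\Gamma(\ov Y_{i+1}^{\perp})) \subset \Gamma(\ov Y_i^{\perp})$, whereas you actually carry out the Leibniz argument that justifies this; your version is the honest verification of what the paper leaves implicit. One minor wording slip: $c_1$ does \emph{not} commute with multiplication by $\lambda^k$ but intertwines it with $\lambda^{-k}$ (as your own first identity $\ov{\lambda v}=\lambda^{-1}\ov v$ shows); your computations use the correct relation, so this does not affect the argument.
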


\begin{proof} 
(i) The condition $F(\Gamma(Y_i)) \subset \Gamma(Y_{i+1})$ is clearly equivalent to
$F(\Gamma(\ov Y_{i+1}^{\perp})) \subset \Gamma(\ov Y_i^{\perp})$.  Also $\wt Y_0 = \lambda^r \ov Y_{t+1}^{\perp}
= \lambda^{r-1}\ov W^{\perp} = W^I$ and $\wt Y_{t+1} = \lambda^r \ov Y_0^{\perp} = \lambda^r \ov W^{\perp} = \lambda W^I$.

(ii) This is quickly checked.

(iii) Noting that $Z_i = P_0 \circ \Phi^{-1}Y_i$ is equivalent to $\Phi^{-1}Y_i+\lambda\HH_+=Z_i+\lambda\HH_+$, we have
$$
\wt Z_i+\lambda\HH_+=\wt\Phi^{-1}\wt Y_i+\lambda\HH_+ = \ov\Phi^{-1}\ov Y_{t+1-i}^{\perp}+\lambda\HH_+ = \ov{\Phi^{-1}Y_{t+1-i}}^{\,\perp}+\lambda\HH_+= \ov Z_{t+1-i}^{\perp}+\lambda\HH_+\,,
$$
giving the result.

(iv) Immediate from (i). 
\end{proof}

Now let $\Phi$ be a polynomial extended solutions of degree at most $r$.
Then the involution $W \mapsto W^I$ above
gives another polynomial extended solution $\wt\Phi =  \lambda^r\ov\Phi$ of degree
at most $r$, see \cite[Example 3.8]{unitons}.   We now see what this involution does to the canonical filtration.

\begin{proposition}\label{pr:realW-Y}
Let $\Phi$ a polynomial extended solution of degree at most $r$.

{\rm (i)} Let $(Y_i)$ be the canonical $F$-filtration of\/ $W$, and define $(\wt Y_i)$ by \eqref{inv-Y} with $t=r;$ then $(\wt Y_i)$ is
the canonical $F$-filtration of\/ $W^I$.

{\rm (ii)} If\/ $\Phi$ is real of degree $r$, then
{\rm (a)} the canonical $F$-filtration $(Y_i)$  is \emph{real}, i.e., fixed under the involution \eqref{inv-Y}$;$
{\rm (b)} the canonical $A^{\varphi}_z$-filtration $Z_i = P_0 \circ \Phi^{-1}(Y_i)$ is
\emph{real}, i.e., fixed under the involution \eqref{inv-Z}$;$ 
{\rm (c)} the legs $\psi_i = Z_i \ominus Z_{i+1}$ satisfy $\psi_i = \ov\psi_{r-i}$ \ $(i=0,1,\ldots, r)$.
\end{proposition}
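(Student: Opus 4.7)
The overall plan is to establish part (i) by a direct computation unpacking both the canonical $F$-filtration \eqref{can-filt} and the involution \eqref{inv-Y}; parts (ii)(a), (b), (c) will then follow as formal consequences. Concretely, writing $W^I = \lambda^{r-1}\ov W^{\perp}$, I need to show that $\wt Y_i := \lambda^r \ov Y_{r+1-i}^{\perp}$ equals $W^I \cap \lambda^i\HH_+ + \lambda W^I$. The one non-obvious ingredient is the identity $\ov\HH_+ = \lambda\HH_+^{\perp}$, a consequence of the fact that complex conjugation on $S^1$ sends $\lambda$ to $\lambda^{-1}$ (so $\ov\HH_+$ consists of the functions $\sum_{k \leq 0} \lambda^k v_k$, and this is exactly $\lambda\HH_+^{\perp}$).

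Given that identity, the verification splits into four quick moves. First, $\ov Y_{r+1-i} = \ov W \cap \lambda^{i-r}\HH_+^{\perp} + \lambda^{-1}\ov W$. Second, taking orthogonal complements via $(A\cap B)^{\perp} = A^{\perp} + B^{\perp}$ and $(A+C)^{\perp} = A^{\perp} \cap C^{\perp}$ yields
$$\ov Y_{r+1-i}^{\perp} = \bigl(\ov W^{\perp} + \lambda^{i-r}\HH_+\bigr) \cap \lambda^{-1}\ov W^{\perp}.$$
Third, multiplying through by $\lambda^r$ and recognising $\lambda^r\ov W^{\perp} = \lambda W^I$ and $\lambda^{r-1}\ov W^{\perp} = W^I$ gives $\wt Y_i = (\lambda W^I + \lambda^i\HH_+) \cap W^I$. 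Fourth, the modular law $(A+B)\cap C = A + (B\cap C)$ (valid because $\lambda W^I \subset W^I$) rearranges this to $\lambda W^I + (W^I \cap \lambda^i\HH_+)$, which is precisely the canonical $F$-filtration of $W^I$ at index $i$. The endpoints $\wt Y_0 = W^I$ and $\wt Y_{r+1} = \lambda W^I$ fall out from the relations $Y_0 = W$ and $Y_{r+1} = \lambda W$.

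For part (ii)(a), applying (i) under the standing hypothesis $W^I = W$ shows that $(\wt Y_i)$ is the canonical $F$-filtration of $W$, so $\wt Y_i = Y_i$. For (b), combining (a) with the correspondence between $F$- and $A^{\varphi}_z$-filtrations and with Lemma \ref{le:inv-Y}(iii) forces $Z_i = \wt Z_i = \ov Z_{r+1-i}^{\perp}$, so $(Z_i)$ is fixed under the involution \eqref{inv-Z}. For (c), substituting the duality $\ov Z_j = Z_{r+1-j}^{\perp}$ into $\ov\psi_{r-i} = \ov Z_{r-i} \ominus \ov Z_{r-i+1}$ produces $\ov\psi_{r-i} = Z_{i+1}^{\perp} \ominus Z_i^{\perp} = Z_i \cap Z_{i+1}^{\perp} = \psi_i$.

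The main technical obstacle is the bookkeeping around conjugation and orthogonal complementation in $\HH$: because conjugation reverses the role of positive and negative Fourier modes, $\ov\HH_+$ is neither $\HH_+$ nor $\HH_+^{\perp}$ but rather $\lambda\HH_+^{\perp}$, and correctly tracking the $\lambda$-shifts throughout the four steps above requires some care. Once that identity is in place, the remainder is formal manipulation with the modular law and standard identities for perpendicular, and the deductions (ii)(a)--(c) are essentially immediate.
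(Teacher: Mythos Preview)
Your proof is correct and follows exactly the same route as the paper's: the paper compresses your four moves into a single displayed chain $\wt Y_i = \lambda^r\ov Y_{r-i+1}^{\perp} = \lambda^r\{(\ov W^{\perp} + \lambda^{i-r}\HH_+) \cap \lambda^{-1}\ov W^{\perp}\} = (\lambda W^I + \lambda^i\HH_+) \cap W^I = W^I \cap \lambda^i\HH_+ + \lambda W^I$, and dispatches part (ii) with ``Immediate from (i)''. Your only addition is making explicit the identity $\ov\HH_+ = \lambda\HH_+^{\perp}$ and the modular law, which the paper leaves implicit.
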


\begin{proof}
(i) We have $Y_i = W \cap \lambda^i\HH_ + +  \lambda W$ so that
\begin{align*}
\wt Y_i &= \lambda^r \ov Y_{r-i+1}^{\perp} 
	= \lambda^r \bigl\{ (\ov W^{\perp} +
	\lambda^{i-r}\HH_+) \cap (\lambda^{-1}\ov W^{\perp}) \bigr\} \\
	&=(\lambda W^I + \lambda^i\HH_+) \cap W^I = W^I \cap \lambda^i\HH_ + + \lambda W^I.
\end{align*}

(ii) Immediate from (i).
\end{proof}

\subsection{$J_2$-holomorphic lifts for maps to real Grassmannians} \label{subsec:real-Grass}
To apply our work to harmonic maps into real Grassmannians, we need the following existence result for extended solutions.

\begin{proposition} \label{pr:real-ext-sol}
Let $\Phi: M \to \Omega \U n$ be a $\nu$-invariant polynomial extended solution which is real of some
even degree $r = 2s$.  Then
{\rm (i)} $\varphi = (-1)^s \Phi_{-1}: M \to G_k(\rn^n)$ is a harmonic map of finite uniton number with $n-k$ even$;$
{\rm (ii)} all such harmonic maps $\varphi$ are given this way; in fact, we may take $\Phi$ to be normalized of
degree at most $2\min(k-1,n-k)$, if\/ $s$ even, and $2\min(k,n-k-1)$, if\/ $s$ is odd.
\end{proposition}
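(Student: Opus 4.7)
For part (i), I would unpack the two hypotheses on $\Phi$. The $\nu$-invariance (in the sense of \eqref{Phi-sym}) gives $\Phi_{-1}^2 = I$, so $\Phi_{-1}$ has values in some complex Grassmannian $G_*(\cn^n)$. Evaluating the reality condition $\Phi = \lambda^{2s}\ov\Phi$ at $\lambda = -1$ gives $\Phi_{-1} = \ov{\Phi_{-1}}$, hence $\Phi_{-1}$ also has values in $\O n$. Combining, $\Phi_{-1}$ factors through $G_j(\cn^n) \cap \O n = G_j(\rn^n)$ for some $j$. Since the Cartan embedding identifies $-\iota(V)$ with $\iota(V^{\perp})$, multiplication by $(-1)^s$ yields $\varphi : M \to G_k(\rn^n)$ with $k = j$ if $s$ is even and $k = n - j$ if $s$ is odd. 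Harmonicity and finite uniton number are then immediate from the extended solution theory of \S\ref{subsec:ext-sol}.

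To check that $n - k$ is even, I would combine Propositions \ref{pr:can-alternate} and \ref{pr:realW-Y}(ii)(c) applied to $\Phi$: the legs of its canonical filtration satisfy both $\psi_i \subset (-1)^i \Phi_{-1}$ and $\psi_i = \ov \psi_{2s-i}$, so the ranks $d_i = \rank \psi_i$ obey $d_i = d_{2s-i}$. The rank $n - k$ is the sum of $d_i$ over those $i \in \{0,1,\ldots, 2s\}$ whose parity is \emph{opposite} to that of $s$ (these being the indices for which $\psi_i$ lies inside $\varphi^{\perp}$). The involution $i \leftrightarrow 2s - i$ preserves parity, and its only fixed point $i = s$ has the parity of $s$ itself; hence the indices of opposite parity pair up without a fixed point, forcing the sum to be even.

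For part (ii), given $\varphi: M \to G_k(\rn^n)$ harmonic of finite uniton number with $n - k$ even, I would proceed in two stages. Stage one: apply Lemma \ref{le:Q}(ii) (with $\varphi$ viewed as a map into $G_k(\cn^n)$) to obtain a $\nu$-invariant polynomial extended solution $\Psi$ with $\Psi_{-1} = \varphi$. Stage two: modify $\Psi$ into a \emph{real} extended solution. The key observation is that the operation $W \mapsto W^I$ preserves $\nu$-invariance when $\varphi$ is real (a direct check using \eqref{Phi-sym}), so it defines an involution on the set of $\nu$-invariant polynomial extended solutions associated to $\varphi$, whose fixed points are precisely the real $\nu$-invariant ones. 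To produce a fixed point, I would compare $\Psi$ and $\Psi^I$: they differ by a $\nu$-invariant loop $\gamma \in \Omega\U n$ which satisfies a natural cocycle condition under the involution, and extracting a ``square root'' of $\gamma$ produces a fixed point. \emph{This is the main obstacle}: tracking $\nu$-invariance and reality simultaneously, while ensuring that the resulting $\Phi$ ends up with even degree $2s$ (possibly after multiplying by $\lambda$ and passing between $\varphi$ and $\varphi^{\perp}$, which is admissible since $n-k$ is even), requires careful loop-group bookkeeping.

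Once a real, $\nu$-invariant polynomial extended solution $\Phi$ of even degree $2s$ is in hand, the degree bounds follow by normalization. Adapting \cite[Corollary 5.7]{unitons} to the real $\nu$-invariant setting --- the loops $\gamma$ used there can be chosen to preserve both structures --- I can reduce to a normalized $\Phi$ of no greater degree, in which every $d_i \geq 1$. Writing $k = \sum_{i \equiv s \pmod 2} d_i$ and $n - k = \sum_{i \not\equiv s \pmod 2} d_i$, and using $d_i = d_{2s-i}$, counting pairs and singletons gives $k \geq s + 1$ and $n-k \geq s$ when $s$ is even (the singleton $d_s$ lying in $\varphi$ contributes to $k$), and $k \geq s$ and $n-k \geq s+1$ when $s$ is odd (the singleton lying now in $\varphi^{\perp}$); rearranging yields the claimed bounds $r = 2s \leq 2\min(k-1, n-k)$ and $r \leq 2\min(k, n-k-1)$ respectively.
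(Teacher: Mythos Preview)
Your treatment of part (i) is correct and essentially matches the paper's argument, only more explicitly spelled out: the paper simply refers to Proposition \ref{pr:can-alternate} for the Grassmannian target and to Proposition \ref{pr:realW-Y}(ii) for the symmetry $d_i = d_{2s-i}$, which is exactly the pairing you use to show $n-k$ is even.

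For part (ii), however, your Stage~2 contains a genuine gap. You correctly observe that $\Psi$ and $\Psi^I$ are both $\nu$-invariant extended solutions associated to $\pm\varphi$, so (when the degree is even) they differ by a constant loop $\gamma$ satisfying the cocycle condition $\ov\gamma\gamma = I$. But ``extracting a square root of $\gamma$'' is not a well-defined step: you need a loop $\delta \in \Omega\U n$ with $\delta = \lambda^{r'-r}\ov\delta\,\gamma$ for some integer $r'$, and simultaneously $\delta$ must be $\nu$-invariant, algebraic, and must keep $\delta\Psi$ polynomial of the correct (even) degree. None of this is automatic in the based loop group, and you give no mechanism for producing such a $\delta$. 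The paper avoids this entirely by invoking \cite[Lemma 6.6]{unitons}, which directly furnishes a $\nu$-invariant extended solution $\Psi:M \to \Omega\O n$ of the symmetric Laurent form $\Psi = \sum_{\ell=-s}^s \lambda^{\ell} T_{\ell}$ with $T_{-\ell} = \ov{T_\ell}$; then $\Phi = \lambda^s\Psi$ is manifestly real of degree $2s$ with $\Phi_{-1} = (-1)^s\varphi$. This is a cleaner route and you should adopt it rather than attempt the square-root construction.

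Your counting argument for the degree bounds after normalization is correct and more transparent than the paper's bare citation of \cite[Proposition 6.23]{unitons}; it is worth keeping, but note that it presupposes the existence of a normalized real $\nu$-invariant $\Phi$, which again rests on the normalization procedure from \cite{unitons} respecting reality --- something the paper also takes from that reference rather than proving afresh.
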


\begin{proof}
(i) This is a consequence of the formula for $\varphi$ in Proposition \ref{pr:can-alternate}; the parity of $n-k$ following from the symmetry of the legs as in
Proposition \ref{pr:realW-Y}(ii).

(ii) By \cite[Lemma 6.6]{unitons}, there is a $\nu$-invariant extended solution $\Psi:M \to \Omega \O{n}$ of the form
$\Psi = \sum_{\ell = -s}^s \lambda^{\ell} T_{\ell}$ with $T_{-\ell} = \ov{T_{\ell}}$ \ $\forall \ell$\,, $T_s \neq 0$ and $\Psi_{-1} = \varphi$,
Setting $\Phi = \lambda^s\Psi$ gives a $\nu$-invariant real polynomial extended solution
of degree $2s$ with $\Phi_{-1} = (-1)^s\varphi$.  That we may take $\Phi$ normalized
with the given bounds on the degree follows from \cite[Proposition 6.23]{unitons}.
\end{proof}

\begin{remark} \label{re:odd-even}
The statement (ii) is false without the factor $(-1)^s$, see Example \ref{ex:RPn}(ii) below.
Also, if $n-k$ is odd, we may embed $G_k(\rn^n)$ in $G_k(\rn^{n+1})$, then harmonic maps from $M$ to $G_k(\rn^n)$ are obtained as non-full harmonic maps into $G_k(\rn^{n+1})$.
\end{remark}

\begin{theorem} \label{th:real-twistor-lifts}
Let $\Phi: M \to \Omega \U n$ be a $\nu$-invariant polynomial extended solution
which is normalized and real of even degree $r = 2s$.
Let $\varphi = \Phi_{-1}:M \to G_k(\rn^n)$ be the resulting harmonic map.  Then $\varphi$ has a $J_2$-holomorphic
lift $\psi:M \to F^{\rn}_{d_0,d_1,\ldots,d_s}$ for some $(d_0,d_1,\ldots,d_s)$
satisfying \eqref{k}, namely the canonical twistor lift defined by $\Phi$ (see Theorem \ref{th:can-lift}).
\qed \end{theorem}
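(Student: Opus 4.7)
The approach is to assemble the lift from two pieces of earlier machinery: Theorem \ref{th:can-lift}, which produces the canonical $J_2$-holomorphic lift into the \emph{complex} flag manifold $F_{d_0,d_1,\ldots,d_r}$, and Proposition \ref{pr:realW-Y}, which, under the realness hypothesis on $\Phi$, forces that lift to take values in the real sub-flag manifold $F^{\rn}_{d_0,d_1,\ldots,d_s}$ of \S \ref{subsec:twistor-real}.

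First, since $\Phi$ is, by hypothesis, a normalized $\nu$-invariant polynomial extended solution of degree $r=2s$, Theorem \ref{th:can-lift} applies verbatim and yields the canonical lift $\psi=(\psi_0,\psi_1,\ldots,\psi_r):M\to F_{d_0,d_1,\ldots,d_r}$, which is $J_2$-holomorphic and satisfies $\pi_e\circ\psi=\varphi$; here $\psi_i=Z_i\ominus Z_{i+1}$ are the legs of the canonical $A_z^{\varphi}$-filtration associated to $\Phi$ via \eqref{can-filt} and \eqref{Z-from-Y}, and $d_i=\rank\psi_i$.

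Next, I invoke Proposition \ref{pr:realW-Y}(ii): the realness of $\Phi$ of degree $r$ makes the canonical $F$-filtration of $W=\Phi\H_+$ a fixed point of the involution \eqref{inv-Y}, and, via Lemma \ref{le:inv-Y}(iii), its associated $A_z^{\varphi}$-filtration a fixed point of \eqref{inv-Z}. Passing to legs, this gives the symmetry $\psi_i=\ov\psi_{r-i}$ for every $i\in\{0,1,\ldots,r\}$. In particular $d_i=d_{r-i}$, so the sequence $(d_0,\ldots,d_s)$ determines $(d_0,\ldots,d_r)$, and the symmetry is precisely the defining condition for $\psi$ to take values in the submanifold $F^{\rn}_{d_0,\ldots,d_s}\hookrightarrow F_{d_0,\ldots,d_{2s}}$ introduced in \S \ref{subsec:twistor-real}. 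Since that inclusion is complex with respect to $J_2$, the lift $\psi$ regarded as a map into $F^{\rn}_{d_0,\ldots,d_s}$ is still $J_2$-holomorphic. The constraint $d_s+2\sum_{i=0}^{s-1}d_i=n$ required by \eqref{k} is immediate from $\sum_{i=0}^{2s}d_i=n$ together with $d_i=d_{2s-i}$.

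It remains only to check compatibility of projections. Since $\pi_e^{\rn}$ is by construction the restriction of $\pi_e$, we have $\pi_e^{\rn}\circ\psi=\pi_e\circ\psi=\varphi$, which confirms $\psi$ is a lift of $\varphi$ and, as a by-product, shows $\varphi$ is real by direct computation: $\ov\varphi=\sum_{j}\ov\psi_{2j}=\sum_{j}\psi_{2(s-j)}=\varphi$. There is no real obstacle here: the whole content is packaging. The only delicate bookkeeping is to verify that the involution \eqref{inv-Y} on $F$-filtrations pushes forward under \eqref{Z-from-Y} to the involution \eqref{inv-Z} on $A_z^{\varphi}$-filtrations, and hence to the correct symmetry on legs; but this was already carried out in Lemma \ref{le:inv-Y}(iii) and Proposition \ref{pr:realW-Y}, and it matches, term for term, the defining symmetry of $F^{\rn}_{d_0,\ldots,d_s}$.
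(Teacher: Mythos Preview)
Your proposal is correct and matches the paper's own approach: the theorem is stated with \qed\ immediately after, since it follows directly from Theorem~\ref{th:can-lift} together with Proposition~\ref{pr:realW-Y}(ii)(c), which gives the symmetry $\psi_i=\ov\psi_{r-i}$ forcing the canonical lift into $F^{\rn}_{d_0,\ldots,d_s}$. Your write-up simply spells out this packaging explicitly.
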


On applying Proposition \ref{pr:real-ext-sol}, we obtain the following corollary.

\begin{corollary} \label{co:real-twistor-lifts}
Let $\varphi:M \to G_k(\rn^n)$ be a harmonic map of finite uniton number.  Then either
$\varphi$ or $-\varphi$ ($=\varphi^{\perp}$) has a $J_2$-holomorphic twistor lift
$\psi: M \to F^{\rn}_{d_0,d_1,\ldots,d_s}$ for some $(d_0,d_1,\ldots,d_s)$ satisfying \eqref{k},
namely the canonical twistor lift defined by a normalized extended solution $\Phi$  of $\pm\varphi$.
\qed \end{corollary}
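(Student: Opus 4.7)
The plan is to combine Proposition \ref{pr:real-ext-sol} with Theorem \ref{th:real-twistor-lifts}, which together constitute the real-case analogues of the $\nu$-invariant extended-solution construction and the canonical twistor lift of \S\ref{sec:twistor-extended}. The corollary should fall out essentially as a direct reformulation of these two results; indeed, the author appears to state it with a $\qed$ at the end precisely because all the substantive work has already been done.

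First I would invoke Proposition \ref{pr:real-ext-sol}(ii) applied to $\varphi:M \to G_k(\rn^n)$ (assuming for the moment that $n-k$ is even, which will be arranged below): it supplies a normalized $\nu$-invariant polynomial extended solution $\Phi:M \to \Omega\U n$ which is real of some even degree $r=2s$, with $\Phi_{-1} = (-1)^s\varphi$, so that $\Phi_{-1} = \pm\varphi$. Feeding this $\Phi$ into Theorem \ref{th:real-twistor-lifts} then produces a $J_2$-holomorphic lift $\psi:M \to F^{\rn}_{d_0,d_1,\ldots,d_s}$, with $d_i = \rank\psi_i$ satisfying \eqref{k}, which by construction is the canonical twistor lift defined by $\Phi$ as in Theorem \ref{th:can-lift}. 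The essential content of Theorem \ref{th:real-twistor-lifts}, in turn, is that the reality of $\Phi$ via Proposition \ref{pr:realW-Y}(ii) forces the legs of the canonical $A^{\Phi_{-1}}_z$-filtration to satisfy the symmetry $\psi_i = \ov{\psi_{r-i}}$, so that the lift factors through the real subflag $F^{\rn}_{d_0,\ldots,d_s} \subset F_{d_0,\ldots,d_{2s}}$ defined in \S\ref{subsec:twistor-real}; this is the one genuinely non-formal point that gets silently packaged into the appeal.

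It remains only to address the parity of $n-k$. If $n-k$ is odd but $k$ is even, I would apply the above argument to $\varphi^{\perp}:M \to G_{n-k}(\rn^n)$, which is again a harmonic map of finite uniton number (the Cartan embedding identifies $\varphi^{\perp}$ with $-\varphi$, and finite uniton number is preserved by this involution); the resulting lift is then a lift of $\pm\varphi^{\perp} = \mp\varphi$, as permitted by the conclusion. If both $k$ and $n-k$ are odd, I would appeal to Remark \ref{re:odd-even} and embed $G_k(\rn^n)$ totally geodesically into $G_k(\rn^{n+1})$, for which $n+1-k$ is even, thereby reducing to the first case. I do not anticipate any serious obstacle: each substantive ingredient has already been established in Proposition \ref{pr:real-ext-sol}, Proposition \ref{pr:realW-Y}, and Theorem \ref{th:real-twistor-lifts}, and the proof of the corollary is really just the observation that these fit together to cover every parity of $(k,n-k)$.
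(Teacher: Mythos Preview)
Your proposal is correct and follows essentially the same route as the paper, which derives the corollary directly by feeding Proposition~\ref{pr:real-ext-sol} into Theorem~\ref{th:real-twistor-lifts}. Your parity discussion is more explicit than the paper's, though the ``both $k$ and $n-k$ odd'' case is arguably outside the scope of the corollary as stated (since no $(d_0,\ldots,d_s)$ can satisfy \eqref{k} in that case for the given $n$, and Remark~\ref{re:odd-even} treats it separately); the core argument---Proposition~\ref{pr:real-ext-sol}(ii) to produce the normalized real $\nu$-invariant $\Phi$ with $\Phi_{-1}=\pm\varphi$, then Theorem~\ref{th:real-twistor-lifts} for the lift---is exactly what the paper intends.
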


\begin{example} \label{ex:RPn}
(i) Recall from Example \ref{ex:r2} that a $\pa'$-pair  (i.e., superhorizontal sequence of length $2$) $\ul{0} \subset \beta_1 \subset \beta_2 \subset \CC^n$
gives rise to two harmonic maps: the mixed pair $\varphi = \beta_1 \oplus \beta_2^{\perp}$ and the strongly isotropic map
$\varphi^{\perp} = \beta_2 \ominus \beta_1$.  The harmonic maps $\varphi$ and $\varphi^{\perp}$
 are real, i.e., have image in $G_*(\rn^n)$, if and only if\/
$\beta_2^{\perp} = \ov{\beta}_1$, in which case
$(\beta_1,\beta_2^{\perp}) = (\beta_1, \ov\beta_1)$, and the resulting harmonic map
$\varphi = \beta_1 \oplus \ov\beta_1$, is called a
\emph{real mixed pair} \cite{bahy-wood-G2} .
In this case, the canonical lift of $\varphi$ defined by the extended solution \eqref{r2}
is the superhorizontal holomorphic map $\psi = (\beta_1, \varphi^{\perp}, \ov\beta_1):M \to F^{\rn}_{d_0,d_1}$, where $d_0 = \rank\beta_1$ and $d_1 = n-2d_0$.

In the case that $\beta_1$ has rank one, we have $\varphi = \beta_1 \oplus \ov\beta_1:M \to G_2(\rn^n)$.  We may identify the twistor space $F^{\rn}_{1,n-2}$ of $G_2(\rn^n)$
with the quadric
$Q_{n-2} = \{L = [L_1,\ldots, L_n] \in \CP{n-1}: \sum_1^n L_i^{\;2} = 0\}$ via the map
$(\psi_0,\psi_1,\psi_2) \mapsto \psi_0$, then $\pi_e^{\rn}:Q_{n-2} \to G_2(\rn^n)$ is the double cover $L \mapsto L \oplus \ov{L}$.   The canonical lift of $\varphi$ is
the superhorizontal holomorphic map $\beta_1 \cong (\beta_1, \varphi^{\perp},\ov\beta_1):M \to Q_{n-2} \cong F^{\rn}_{1,n-2}$\,. See \cite{bahy-wood-G2} for more information on harmonic maps from a surface to $G_2(\rn^n)$.

(ii)  Let $f:M \to \CP{n-1}$ be a full holomorphic map which is \emph{totally isotropic} \cite{eells-wood} in the sense that
$G^{(n-1)}(f) = \ov f$.  Then $n-1$ is even, say $2m$, and $f_{(m-1)}$ is a maximal isotropic subbundle of $\CC^n$.
Setting $\beta_1 = f_{(m-1)}$ and $\beta_2 = f_{(m)}$, we have
$\beta_2^{\perp} = \ov\beta_1$, so we obtain a real mixed pair
 $\varphi = \beta_1 \oplus \ov\beta_1:M \to G_{2m}(\rn^{2m+1})$ with
$\varphi^{\perp}:M \to \RP{2m}$ a full harmonic map whose composition with the canonical inclusion of $\RP{2m}$ in $\CP{2m}$ is isotropic.  E.\ Calabi and
S.-S.\ Chern showed (see \cite{eells-wood}) that all such isotropic harmonic maps from a surface to a real projective space $\RP{n-1}$, in particular, all harmonic maps from the
$2$-sphere, are given this way; all
 harmonic maps from $S^2$ to a sphere $S^{n-1}$ can be obtained as double covers of those maps.     The canonical lift of $\varphi$ defined by \eqref{r2} is the superhorizontal holomorphic map
$\psi = (\beta_1, \varphi^{\perp}, \ov\beta_1): M \to F_{m,1}^{\rn} = \O{2m+1}\big/ (\U{m} \times \O{1}) = \SO{2m+1}/\U{m}$. 
 
As before, $\varphi^{\perp}$ has no twistor lift, as it would have to be the sum of at least two legs. 

(iii) Example \ref{ex:r2} part (iii) does not specialize to give real maps; indeed \emph{there are no real Frenet pairs}
\cite[Prop.\ 5.10]{bahy-wood-G2}.

(iv) Generalizing part (i), let $\ul{0} =\beta_0 \subset \beta_1 \subset \cdots \subset \beta_r \subset \beta_{r+1} = \CC^n$ be a nested sequence of subbundles which is  superhorizontal (see \S \ref{subsec:superhor}).   Recall that such a sequence defines an
$S^1$-invariant extended solution $\Phi$ which is polynomial of degree $r$, and a harmonic map $\varphi = \Phi_{-1}$ given by \eqref{W-S1-invt}.
Say that a nested sequence $(\beta_i)$ is \emph{real (of degree $r$)} if\/
$\beta_i^{\perp} = \ov\beta_{r+1-i}$ for all $i$;
on setting $\psi_i = \beta_{i+1}\ominus\beta_{i}$ \ $(i=0,1,\ldots, r)$, this is equivalent to $\psi_i = \ov\psi_{r-i}$\,.  
\emph{A superhorizontal sequence $(\beta_i)$ is real if and only if the
corresponding extended solution \eqref{W-S1-invt} is real}.
Now suppose that $(\beta_i)$ is real of even degree $r=2s$.  Then $\varphi = \sum_j \psi_{2j}$ is a map into $G_*(\rn^n)$ and
the canonical $J_2$-holomorphic twistor lift of $\varphi:M \to G_*(\rn^n)$ defined by $\Phi$ is
$\psi = (\psi_0,\psi_1,\ldots, \psi_r):M \to F^{\rn}_{d_0,\ldots, d_r}$.
As in \S \ref{subsec:superhor}, it is superhorizontal. 
\end{example}

We now see how to obtain twistor lifts for real nilconformal maps by a method which extends that which gave the
Burstall lift of Example \ref{ex:burstall-J2}.
It is easy to check (in fact, it is a special case of Lemma \ref{le:duality-Az}(ii))
that, \emph{if\/ $\varphi$ or\/ $\ii\varphi$ is real and $\alpha$ is a uniton for $\varphi$, so is $\ov\alpha^{\perp}$}.
Note that $\alpha$ is isotropic if and only if $\ov\alpha^{\perp} \subset \alpha$, and
\emph{maximally} isotropic exactly when $\ov\alpha^{\perp} = \alpha$.

\begin{proposition} \label{pr:lift-with-uniton-real}
Let $\varphi:M \to \U{n}$ be a nilconformal harmonic map\/  from a surface which has image in $\O{n}$ or $\ii\O{n}$, and let $\alpha$ be an isotropic uniton for $\varphi$ (possibly identically zero).
Then
\begin{enumerate}
\item[(i)] there is a real strict $A^{\varphi}_z$-filtration $(Z_i)$ with
$Z_i= \alpha$ for some $i$.

\item[(ii)] If\/ $\varphi$ maps into a real Grassmannian or into $\O{2m}\big/\U{m}$, and $\alpha$ splits, then we can find a filtration $(Z_i)$ as described in part {\rm (i)} which splits.
\end{enumerate}
\end{proposition}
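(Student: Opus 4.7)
The plan is to adapt the proof of Proposition~\ref{pr:lift-with-uniton} by exploiting the reality involution $\iota:(Z_i)\mapsto(\ov{Z_{t+1-i}}^{\perp})$ of Lemma~\ref{le:duality-Az}(ii). The key new input is that, because $\alpha$ is isotropic and $\varphi$ has image in $\O n$ or $\ii\O n$, the subbundle $\ov\alpha^{\perp}$ is also a uniton for $\varphi$ (as noted just before the proposition) and contains $\alpha$. I will construct a single filtration which places $\alpha$ and $\ov\alpha^{\perp}$ at reality-dual positions, so that $\iota$ maps the filtration to itself.

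First, by nilpotency of $A_z^{\varphi}$, choose $k\in\nn$ large enough that $(A_z^{\varphi})^k(\CC^n)\subset\alpha$. Using $\ov{A_z^{\varphi}}=A_{\zbar}^{\varphi}$ (which holds since $\varphi$ or $\ii\varphi$ is real) together with the adjoint identity $(f^{-1}(V))^{\perp}=f^*(V^{\perp})$, this is equivalent to $(A_z^{\varphi})^k(\alpha)=0$. I will build the filtration in three parts. The \emph{outer part}, for $0\le i\le k$, is defined by preimages
\[
Z_i=(A_z^{\varphi})^{-(k-i)}(\ov\alpha^{\perp}),
\]
yielding a strictly decreasing chain from $\CC^n$ down to $\ov\alpha^{\perp}$ (taking $k$ minimal and removing any repeats). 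The \emph{inner part} is defined by images $Z_{t+1-i}=(A_z^{\varphi})^{k-i}(\alpha)$ for $0\le i\le k$, a strictly decreasing chain from $\alpha$ down to $\ul{0}$. The adjoint identity $\ov{(A_z^{\varphi})^{-j}(\ov\alpha^{\perp})}^{\perp}=(A_z^{\varphi})^{j}(\alpha)$ shows at once that the outer and inner parts are interchanged by $\iota$.

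For the \emph{middle part}, which must connect $\ov\alpha^{\perp}$ to $\alpha$, the idea is to interlace the two natural candidate chains $\delta_j=(A_z^{\varphi})^j(\ov\alpha^{\perp})+\alpha$ and $\beta_j=(A_z^{\varphi})^{-j}(\alpha)\cap\ov\alpha^{\perp}$, both of which are $A_z^{\varphi}$-compatible, holomorphic, and monotonically join $\ov\alpha^{\perp}$ to $\alpha$ as $j$ grows past the first index with $(A_z^{\varphi})^j(\ov\alpha^{\perp})\subset\alpha$ (which exists by nilpotency). A direct computation based on $\ov{A_z^{\varphi}}=A_{\zbar}^{\varphi}$ gives $\iota(\delta_j)=\beta_j$, so an index-symmetric combination of $\delta_j$ and $\beta_j$ produces a chain preserved by $\iota$. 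Strictness is then enforced by removing repeated terms, as in Proposition~\ref{pr:lift-with-uniton}.

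Part (ii) follows by running the same construction: when $\varphi$ maps into a real Grassmannian or $\O{2m}/\U m$ and $\alpha$ splits for $\varphi$, the complement $\ov\alpha^{\perp}$ splits automatically, and the operations of taking preimages, images, intersections, sums, and complex conjugate orthogonals all preserve splitting; hence every $Z_i$ produced above splits. The main obstacle is the middle part: arranging the $\delta_j$-$\beta_j$ interlacing so that the resulting chain is simultaneously strict, $A_z^{\varphi}$-closed, and real. The crucial ingredient is the isotropy condition $\alpha\subset\ov\alpha^{\perp}$, which is exactly what forces $\iota$ to exchange $\delta_j$ with $\beta_j$ and so permits the symmetric interlacing to succeed.
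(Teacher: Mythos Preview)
Your outer and inner parts are fine and in fact coincide with what the paper does: using $\ov{(A_z^{\varphi})^i(\alpha)}^{\perp}=(A_z^{\varphi})^{-i}(\ov\alpha^{\perp})$, your preimage chain down to $\ov\alpha^{\perp}$ is exactly the conjugate-perp of the image chain up from $\alpha$, and these two pieces glue at the ends to give a real partial $A_z^{\varphi}$-filtration. The gap is the middle. You assert that ``an index-symmetric combination of $\delta_j$ and $\beta_j$'' fills the stretch from $\ov\alpha^{\perp}$ to $\alpha$, but you do not say what the combination is, and none of the natural candidates works. Since $\delta_j\subset\beta_{m-j}$ (where $m$ is the least integer with $(A_z^{\varphi})^m(\ov\alpha^{\perp})\subset\alpha$), the two obvious symmetric mixes $\delta_j\cap\beta_{m-j}$ and $\delta_j+\beta_{m-j}$ collapse back to $\delta_j$ and $\beta_{m-j}$; using the $\delta$-chain alone is not real (its $\iota$-image is the $\beta$-chain, which is different in general); and splicing the $\beta$-chain for the first half with the $\delta$-chain for the second half fails the $A_z^{\varphi}$-closure condition at the junction, since $A_z^{\varphi}(\beta_\ell)\subset\beta_{\ell-1}$ need not lie in $\delta_\ell$.

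What the paper does for the middle is more substantial than interlacing. It sets $t=t(\alpha)$ to be the least integer with $(A_z^{\varphi})^{t+1}(\ov\alpha^{\perp})\subset\alpha$; if $t\le 0$ the middle is immediate. Otherwise it replaces $\alpha$ by the larger uniton $\alpha_1=(A_z^{\varphi})^t(\ov\alpha^{\perp})+\alpha$ and proves that $\alpha_1$ is again \emph{isotropic}: using symmetry of $A_z^{\varphi}$ for the complex bilinear form,
\[
\bigl\langle (A_z^{\varphi})^t(\ov\alpha^{\perp}),\,(A_z^{\varphi})^t(\ov\alpha^{\perp})\bigr\rangle_{\cn}
=\bigl\langle \ov\alpha^{\perp},\,(A_z^{\varphi})^{2t}(\ov\alpha^{\perp})\bigr\rangle_{\cn}=0
\]
since $2t\ge t+1$. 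One then has $A_z^{\varphi}(\alpha_1)\subset\alpha$, $A_z^{\varphi}(\ov\alpha^{\perp})\subset\ov\alpha_1^{\perp}$, and $t(\alpha_1)\le t(\alpha)-1$, so iterating produces the real $A_z^{\varphi}$-chain $\ov\alpha^{\perp}\supset\ov\alpha_1^{\perp}\supset\cdots\supset\alpha_1\supset\alpha$ that fills the middle. This isotropy computation is the missing idea in your sketch; without it there is no mechanism forcing the $A_z^{\varphi}$-step across the centre of the filtration.
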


\begin{proof}
(i) Let $\alpha = Z_0 \supset Z_1 \supset \cdots \supset Z_s \supset Z_{s+1} = \ul{0}$
be a strict \emph{partial $A^{\varphi}_z$-filtration}, i.e., a strict filtration satisfying
conditions (i) and (ii) of Definition \ref{def:Az-filt}; for example $Z_i = (A_z^{\varphi})^i(\alpha)$.
As in Lemma \ref{le:duality-Az}(i),
$\CC^n = \ov Z_{s+1}^{\perp} \supset \ov Z_s^{\perp} \supset \cdots \supset \ov Z_1^{\perp} \supset \ov Z_0^{\perp} = \ov\alpha^{\perp}$
is also a strict partial $A^{\varphi}_z$-filtration.    We can put them together to give a filtration
\be{long-filt}
\CC^n = \ov Z_{s+1}^{\perp} \supset \ov Z_s^{\perp} \supset \cdots \supset \ov Z_1^{\perp}\supset \ov Z_0^{\perp} = \ov\alpha^{\perp} \supset \alpha= Z_0 \supset Z_1 \supset \cdots \supset Z_s \supset Z_{s+1} = \ul{0},
\ee
which is a real strict $A^{\varphi}_z$-filtration except that $\ov\alpha^{\perp}$ might equal $\alpha$, or
$A^{\varphi}_z(\ov\alpha^{\perp})$ might not lie in $\alpha$.

So let $t = t(\alpha) \in \{-1,0,1,\ldots\}$ be the least integer
such that $(A^{\varphi}_z)^{t+1}(\ov{\alpha}^{\perp}) \subset \alpha$; this exists by nilconformality.

(a) If $t=-1$, i.e., $\ov\alpha^{\perp} = \alpha$, remove $\ov\alpha^{\perp}$
from \eqref{long-filt} leaving
a real strict $A^{\varphi}_z$-filtration $(Z_i)$ of length $2s+1$ with middle subbundle
equal to $\alpha$.   

(b) If $t=0$, then $A^{\varphi}_z(\ov{\alpha}^{\perp}) \subset \alpha$ and \eqref{long-filt} is a real strict $A^{\varphi}_z$-filtration of length $2s+2$.

(c) Otherwise, we have $t \geq 1$; set 
$\alpha_1 = (A^{\varphi}_z)^t(\ov{\alpha}^{\perp})  + \alpha$.   Then $\alpha_1$ is a uniton which contains $\alpha$.  Further,
 $\alpha_1$ is isotropic, indeed, for the standard complex symmetric $\cn$-bilinear inner product $\ip{\cdot}{\cdot}_{\cn}$
on $\cn^n$, since $A^{\varphi}_z$ is symmetric,
$$
\ip{\alpha_1}{\alpha_1}_{\cn} = \ip{(A^{\varphi}_z)^t(\ov\alpha^{\perp})} {(A^{\varphi}_z)^t(\ov\alpha^{\perp})}_{\cn} =
\ip{\ov{\alpha}^{\perp}}{(A^{\varphi}_z)^{2t}(\ov{\alpha}^{\perp})}_{\cn}\,;
$$
this is zero since $2t \geq t+1$.

Thus we obtain a filtration:
$\ov{\alpha}^{\perp} \supset \ov\alpha_1^{\perp} \supset \alpha_1 \supset \alpha$ with
 $A^{\varphi}_z(\alpha_1) \subset \alpha$ and
$A^{\varphi}_z(\ov\alpha^{\perp}) \subset \ov\alpha_1^{\perp}$.
Further $(A^{\varphi}_z)^t(\ov\alpha_1^{\perp})  \subset (A^{\varphi}_z)^t(\ov{\alpha}^{\perp})  \subset \alpha_1$, so that $t(\alpha_1) \leq t(\alpha)-1$.

By repeating this construction at most $t$ times we obtain a partial
$A^{\varphi}_z$-filtration:
$$
\ov{\alpha}^{\perp} \supset \ov\alpha_1^{\perp} \supset \ov\alpha_2^{\perp}
\supset \cdots \supset \ov\alpha_j^{\perp}
\supset \alpha_j \supset \cdots \supset \alpha_2 \supset \alpha_1 \supset \alpha\,.
$$
Gluing this into the middle of
\eqref{long-filt} gives a real $A^{\varphi}_z$-filtration.
If $\ov\alpha_j^{\perp} \neq \alpha_j$, this is a real strict $A^{\varphi}_z$-filtration of even length.
If $\ov\alpha_j^{\perp} = \alpha_j$, remove $\ov\alpha_j^{\perp}$, leaving a real strict $A^{\varphi}_z$-filtration of odd length.

(ii) This is clear from the construction.  
\end{proof}

\begin{example}
(i) If\/ $\alpha$ is a \emph{basic} isotropic uniton, then it can be taken to be the last
leg of the filtration, so that $\ov\alpha$ is the first.

(ii) If\/ $t \in \{0,1,\ldots\}$ is the least integer such that $(A^{\varphi}_z)^{t+1} = 0$. Then, for any $(t+1)/2 \leq s \leq t$, $\Ima(A^{\varphi}_z)^s$ is a non-zero isotropic uniton with the last one, $\Ima(A^{\varphi}_z)^t$, basic. 

(iii) If\/ $n = 2m+1$ is odd and
$\alpha$ is an isotropic uniton of rank $m$, then $t(\alpha) = 1$, i.e., $A^{\varphi}_z(\ov\alpha^{\perp}) \subset \alpha$ and we have case (b) above,
so that we obtain a strict
$A^{\varphi}_z$-filtration with $\ov\alpha^{\perp} \supset \alpha$ in the middle.
Indeed, if we had $t(\alpha) >1$, then $A^{\varphi}_z$ would factor to a non-zero map
on the rank one bundle $\ov\alpha^{\perp}/\alpha$ which is not possible by nilconformality.
\end{example}

Our next result concerns the Grassmannian $\wt G_k(\rn^n)$ of oriented
$k$-dimensional subspaces of $\rn^n$ discussed in \S \ref{subsec:twistor-real}.  Recall that we have a double covering
$\wt G_k(\rn^n) \to G_k(\rn^n)$ which forgets the orientation: we call a smooth map
$\varphi:M \to \wt G_k(\rn^n)$ nilconformal if its composition with this double covering is nilconformal. 

\begin{proposition} \label{th:lift-with-uniton-real}
Let $\varphi:M \to \wt G_k(\rn^n)$ be a harmonic map from a surface
 with $k$ or $n-k$ even. 
Then, $\varphi$ or $\varphi^{\perp}$ has a $J_2$-holomorphic lift\/ $\psi:M \to F^{\rn}_{d_0,d_1,\ldots,d_s}$ for some $(d_0,d_1,\ldots,d_s)$ satisfying \eqref{k}
if and only if\/ $\varphi$ is nilconformal.
\qed \end{proposition}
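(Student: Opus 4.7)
The direction ``$\Leftarrow$'' (existence of a lift implies nilconformality) is immediate: composing a $J_2$-holomorphic lift $\psi:M \to F^{\rn}_{d_0,\ldots,d_s}$ of $\varphi$ or $\varphi^{\perp}$ with the canonical $J_2$-holomorphic inclusion $F^{\rn}_{d_0,\ldots,d_s} \hookrightarrow F_{d_0,\ldots,d_{2s}}$ (a complex submanifold embedding by construction in \S\ref{subsec:twistor-real}) gives a $J_2$-holomorphic lift into a complex flag manifold, to which Proposition \ref{pr:J2-Az-equiv} applies to produce a strict alternating $A^{\varphi}_z$-filtration of $\CC^n$, equivalent to $\varphi$ being nilconformal.

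For the converse, assume $\varphi$ is nilconformal. Composing with the double cover $\wt G_k(\rn^n)\to G_k(\rn^n)$ (which preserves nilconformality), I would apply Proposition \ref{pr:lift-with-uniton-real}(ii) with the trivial isotropic uniton $\alpha = \ul{0}$ to obtain a real, strict, splitting $A^{\varphi}_z$-filtration $(Z_i)_{i=0}^{T+1}$. The reality condition $Z_i = \ov{Z_{T+1-i}}^{\perp}$ and $\ov\varphi = \varphi$ together impose a conjugation symmetry on the split pieces $\wh\psi_j$ defined by \eqref{Bi}: concretely, $\ov{\wh\psi_{2i}} = \wh\psi_{2(T-i)}$ and $\ov{\wh\psi_{2i+1}} = \wh\psi_{2(T-i)+1}$, so each $\wh\psi_j$ is paired with another of the same parity, with a distinguished central leg of each parity.

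Following the proof of Proposition \ref{pr:J2-from-splits}, the moving flag $(\wh\psi_0, \wh\psi_1, \ldots, \wh\psi_{2T+1})$ is $J_2$-holomorphic by Lemma \ref{le:combine}(i) and carries the conjugation action described. I would then apply Operations 1--4 of Lemmas \ref{le:remove-zero} and \ref{le:reduce-legs} in conjugate pairs --- removing or combining each leg simultaneously with its conjugate partner, and combining the two self-conjugate central legs (one in $\varphi$, one in $\varphi^{\perp}$) by a single symmetry-breaking step into one real leg --- to reduce this to a $J_2$-holomorphic moving flag $\wt\psi = (\wt\psi_0, \ldots, \wt\psi_{2s})$ of odd length, with all legs non-zero, satisfying \eqref{norm-legs} and the reality condition $\wt\psi_i = \ov{\wt\psi_{2s-i}}$. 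Such $\wt\psi$ defines a $J_2$-holomorphic map into $F^{\rn}_{d_0,\ldots,d_s}$ whose twistor projection $\pi_e^{\rn}\circ\wt\psi$ equals $\pm\varphi$. The hypothesis that $k$ or $n-k$ is even is exactly what allows, after possibly swapping $\varphi$ with $\varphi^{\perp}$, the dimensions $d_i = \rank\wt\psi_i$ to satisfy \eqref{k}: parity of $s$ is dictated by which of $k$, $n-k$ is even.

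The principal obstacle is the combinatorial bookkeeping of the reduction operations. The $\wh\psi$-sequence has an even number of legs ($2T+2$), while a real flag requires an odd number, so a reduction by an odd amount is necessary; the conjugation pairing makes the ``central'' symmetric combining the only natural choice to break parity, and one must check that this step is compatible with both the alternation condition \eqref{J2-cond} and with all subsequent conjugate-pair removals. Verifying that vanishing of $A'_{\wh\psi_j,\wh\psi_{j+1}}$ or of a leg comes in conjugate pairs (so that Operations 1--4 can always be applied symmetrically) follows from reality of the filtration and harmonicity of $\varphi$, but requires a case analysis paralleling the case distinctions in the proof of Proposition \ref{pr:lift-with-uniton-real}.
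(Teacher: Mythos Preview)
Your ``only if'' direction (lift $\Rightarrow$ nilconformal) is fine and matches the paper.

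For the converse, there is a genuine gap. When $\varphi$ is real (i.e., $\ov\varphi = \varphi$ as subbundles of $\CC^n$), the conjugation acting on the $\wh\psi$-sequence from Lemma~\ref{le:combine}(i) is \emph{not} a single reflection. From a real split filtration of length $T$ one has $\ov\psi_i = \psi_{T-i}$, and since $\ov\varphi = \varphi$, this gives $\ov{\wh\psi_{2i}} = \wh\psi_{2(T-i)}$ and $\ov{\wh\psi_{2i+1}} = \wh\psi_{2(T-i)+1}$. In other words, even-indexed legs reflect about position $T$ while odd-indexed legs reflect about position $T+1$; there is no centre about which $\ov{\wh\psi_j} = \wh\psi_{c-j}$ holds for all $j$. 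Consequently Operations 1--4 cannot be applied ``in conjugate pairs'' in any way that produces the symmetry $\wt\psi_i = \ov{\wt\psi_{2s-i}}$ required for $F^{\rn}_{d_0,\ldots,d_s}$. Your proposed fix of combining the two self-conjugate central legs $\wh\psi_T \subset \varphi$ and $\wh\psi_{T+1}\subset\varphi^\perp$ into ``one real leg'' would destroy alternation (the combined leg lies in neither $\varphi$ nor $\varphi^\perp$), so $\pi_e$ would no longer give $\pm\varphi$. (Contrast this with $\O{2m}/\U m$, where $\ov\varphi = \varphi^\perp$ swaps parities and one \emph{does} get $\ov{\wh\psi_j}=\wh\psi_{2T+1-j}$; that is why the paper uses exactly your strategy in the proof of Proposition~\ref{th:lift-OCS} but \emph{not} here.)

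A second symptom of the gap is that your argument never uses the hypothesis that $\varphi$ lands in the \emph{oriented} Grassmannian. The paper's proof does: after replacing $\varphi$ by $\varphi^\perp$ so that $n-k$ is even, it builds a \emph{maximal isotropic} holomorphic subbundle $W\subset\varphi^\perp$ closed under $(A^\varphi_z)^2$; the extension of an isotropic subbundle to a maximal one (following \cite[Theorem 2.5]{burstall-rawnsley}) requires the bundle $\varphi^\perp\to M$ to be orientable. With $W$ in hand, $\alpha = W + A^\varphi_z(W)$ is an isotropic uniton with $\ov\alpha^\perp\ominus\alpha\subset\varphi$ and $A^\varphi_z(\ov\alpha^\perp)\subset\alpha$, so the real filtration built from $\alpha$ is \emph{alternating} from the start, bypassing entirely the split-to-alternating conversion that causes your difficulty.
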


\begin{proof}
Suppose that $\varphi$ or $\varphi^{\perp}$ has a $J_2$-holomorphic lift as stated.  Then $\varphi$ is nilconformal by Corollary \ref{co:converse}.

Conversely, suppose that $\varphi$ is nilconformal.  By replacing $\varphi$ by
$\varphi^{\perp}$, if necessary, we can assume that $n-k$ is even.

(a) \emph{We find a maximal isotropic holomorphic subbundle $W$ of\/ $\varphi^{\perp}$ which is closed under $(A^{\varphi}_z)^2$}.
This is done by a modification of the argument in the proof of Proposition \ref{pr:lift-with-uniton-real}, as follows.
  
Let $\beta$ be an isotropic holomorphic subbundle of $\varphi^{\perp}$ which is closed under
$(A^{\varphi}_z)^2$.  Note that this implies that $\ov\beta^{\perp} \cap \varphi^{\perp}$ is also closed under
$(A^{\varphi}_z)^2$.

\indent\indent (i) We extend $\beta$ to an isotropic holomorphic subbundle of
$\varphi^{\perp}$, again called $\beta$, which satisfies
\be{beta-cond}
(A^{\varphi}_z)^2(\ov\beta^{\perp} \!\cap \varphi^{\perp}) \subset \beta.
\ee
To do this, let $u = u(\beta) \in \{-1,0,1,\ldots\}$ be the least integer such that $(A^{\varphi}_z)^{2u+2}(\ov\beta^{\perp} \!\cap \varphi^{\perp}) \subset \beta$;
this exists by nilconformality.
If $u < 1$, $\beta$ already satisfies \eqref{beta-cond}.
Otherwise $u \geq1$ and we set
$\beta_1 = (A^{\varphi}_z)^{2u}(\ov\beta^{\perp}\!\cap \varphi^{\perp}) +\beta$.  Then it is easy to check that $\beta_1$ is is isotropic, closed under $(A_z^{\varphi})^2$ and has $u(\beta_1 ) < u(\beta)$.    By repeating this construction at most $u$ times
starting with $\beta = 0$ we obtain
an \emph{isotropic holomorphic subbundle $\beta$ of\/ $\varphi^{\perp}$ which
satisfies \eqref{beta-cond}}.

\indent\indent (ii) We extend this $\beta$ to a \emph{maximal isotropic} holomorphic subbundle $W$ of $\varphi^{\perp}$ as 
 in \cite[Theorem 2.5]{burstall-rawnsley} as follows.
First note that the bundle $\varphi^{\perp} \to M$ is oriented since it is the pull-back of the oriented tautological bundle
$L^{\perp} \to \wt G_{n-k}(\rn^n)$.
Then, starting with $X=\beta$, we can successively extend $X$ increasing its rank by one until we obtain an isotropic holomorphic subbundle $X$ with $\rank \ov X^{\perp} - \rank X = 2$. Then, by orientability of the bundle
$\varphi^{\perp} \to M$, there is precisely one positive maximally isotropic subbundle $W$ of $\varphi^{\perp}$ containing $X$.
Since $\ov\beta^{\perp}\!\cap \varphi^{\perp} \supset W \supset \beta$, by \eqref{beta-cond}
we have $(A^{\varphi}_z)^2(W) \subset W$, as desired.

(b) \emph{Let $W$ be a maximal isotropic holomorphic subbundle of $\varphi^{\perp}$ which is closed under $(A^{\varphi}_z)^2$}.
Set $\alpha = W + A^{\varphi}_z(W)$.  Then $\alpha$ is an isotropic uniton.
Note that $\ov\alpha^{\perp} = W + A$ where $A = \ov{A^{\varphi}_z(W)}^{\perp} \cap \varphi$ so that $\ov\alpha^{\perp} \ominus \alpha \subset \varphi$.
{}From $\ip{A^{\varphi}_z(A)}{W}_{\cn} = \ip{A^{\varphi}_z(W)}{A}_{\cn} = 0$, we see that
$A^{\varphi}_z(A) \subset W$ whence
$A^{\varphi}_z (\ov\alpha^{\perp}) \subset \alpha$.  Let $s$ be the least positive integer such that
\be{W-order}
(A^{\varphi}_z)^s(W) = 0, \quad \text{equivalently,} \quad (A^{\varphi}_z)^s(\alpha) = 0.
\ee

Set $Z_i = (A^{\varphi}_z)^{i-s-1}(\alpha)$ \ $(i = s+1,s+2,\ldots, 2s+1)$ and
$Z_i = \ov Z_{2s+1-i}^{\perp}$ \ $(i = 0,1,\ldots,s)$.  Then $(Z_i)$ is an alternating
real $A^{\varphi}_z$-filtration of length $2s$ with $Z_s = \ov\alpha^{\perp}$,
$Z_{s+1} = \alpha$ and $Z_{2s}  = (A^{\varphi}_z)^{s-1}(W) \in \wt\varphi$
where $\wt\varphi = (-1)^s\varphi$.
Setting $\psi_i = Z_i \ominus Z_{i+1}$ defines a moving flag $(\psi_0,\psi_1,\ldots, \psi_{2s})$ which satisfies the $J_2$-holomorphicity condition \eqref{J2-cond}.
Now all $\psi_i$ are non-zero with the possible exception of
$\psi_s = \ov\alpha^{\perp} \ominus \alpha$.  If this is zero, remove it and combine the legs
$\psi_{s-1}$ and $\psi_{s+1}$ as in Operation 3 of Lemma \ref{le:remove-zero}, thus reducing
$s$ by one.  Thus we obtain a $J_2$-holomorphic lift  
$\psi:M \to F^{\rn}_{d_0,\ldots, d_s}$ of $\wt\varphi$ for some $(d_0,\ldots,d_s)$.
\end{proof}

\begin{remark} \label{re:unoriented}
{\rm (i)} Unlike Corollary \ref{co:real-twistor-lifts}, this corollary applies to nilconformal harmonic maps whether they have finite
uniton number or not; however, it is not as explicit, as the proof, in general involves the choice of a holomorphic subbundle
which is maximal isotropic.

{\rm (ii)} Let $\varphi:M \to G_k(\rn^n)$ be a harmonic map from a surface to a
real Grassmannian $G_k(\rn^n)$ of \emph{unoriented} subspaces, with $k$ or $n-k$ even. 
Then \emph{$\varphi$ or $\varphi^{\perp}$ has a $J_2$-holomorphic lift\/ $\psi:M \to F^{\rn}_{d_0,d_1,\ldots,d_s}$, for some $(d_0,d_1,\ldots,d_s)$ satisfying \eqref{k},
if and only if\/ $\varphi$ is nilconformal with corresponding subbundle $\varphi$ orientable, equivalently, the first Steifel--Whitney class $w_1(L)$ of the tautological bundle
$L \to G_k(\rn^n)$ satisfies $\varphi^* w_1(L) = 0$}.
Indeed, under those conditions $\varphi$ lifts to a map from $M$ to $\wt G_k(\rn^n)$, and the theorem applies.
\end{remark}
 
\begin{example}  
(i) The condition  \eqref{W-order} implies that
\be{W-cond-s}
\Ima(A^{\varphi}_z)^s \cap \varphi^{\perp} \subset W \subset \ker(A^{\varphi}_z)^s \cap \varphi^{\perp}.
\ee
which in turn implies that $(A^{\varphi}_z)^{2s}(\wt\varphi^{\perp}) = 0$.
Conversely, if\/ $\varphi$ satisfies this last condition, then we can choose $W$ to satisfy \eqref{W-cond-s}: just do the construction of part (i) of the proof above starting with $\beta = \Ima(A^{\varphi}_z)^s \cap \varphi^{\perp}$.

(ii) Putting $s=1$, we deduce the following. \emph{Let
$\varphi:M \to \wt G_k(\rn^n)$ be a non-constant strongly conformal harmonic map
with $n-k$ even.  Then {\rm (i)} there are maximal isotropic holomorphic subbundles $W$ of\/
$\varphi^\perp$ which satisfy \eqref{W-cond}$;$  {\rm (ii)} for such a $W$,
we have a $J_2$-holomorphic twistor lift\/
$\psi = (\ov W,\varphi,W):M \to F^{\rn}_{m,k}$ of\/ $\varphi^{\perp}$, where $m = (n-k)/2$},
cf.\ Example \ref{ex:str-conf}(iii).

(iii) For $n-k=2$, as in Example \ref{ex:str-conf}(iv), there is only one choice of $W$ in (ii) and, reversing the roles of $\varphi$ and $\varphi^{\perp}$, we obtain the following.
\emph{Let $\varphi:M \to G_2(\rn^n)$ a non-constant harmonic map  with $\varphi^{\perp}$ strongly conformal.  Then $\varphi$ has a unique $J_2$-holomorphic lift
$\psi = (\ov W,  \varphi^{\perp}, W):M \to F^{\rn}_{1,n-2}$}; see also
Corollary \ref{co:G2Cn}.
\end{example}
 
\subsection{$J_2$-holomorphic lifts for maps to the space of orthogonal complex structures}
\label{subsec:OCS}

The analogues of the results of \S \ref{subsec:real-Grass} are as follows; the first following from the results of \S 6.3 and Corollary 6.23(iii) of \cite{unitons}.

\begin{proposition} 
Let $\Phi: M \to \Omega \U{n}$ be a $\nu$-invariant polynomial extended solution
which is real of some
odd degree $r = 2s+1$.  Then
{\rm (i)} $n$ is even, i.e., $n =2m$ for some $m \in \nn$;
{\rm (ii)} $\varphi = \Phi_{-1}: M \to \O{2m}/\U{m}$ is a harmonic map of finite uniton
number$;$
{\rm (iii)} all such harmonic maps $\varphi$ are given this way up to sign; in fact, we may take $\Phi$ to be normalized of
degree at most $2m-3$.
\qed \end{proposition}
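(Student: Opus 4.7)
The plan is to adapt the argument of Proposition \ref{pr:real-ext-sol} to the odd-degree setting, using the structure theory of polynomial extended solutions into $\O{2m}/\U{m}$ developed in \cite[\S 6.3]{unitons}.

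For (i), I combine the two symmetries of $\Phi$. The $\nu$-invariance condition \eqref{Phi-sym} evaluated at $\lambda=-1$ gives $\varphi^2 = \Phi_1 = I$, so $\varphi$ factors through the Cartan embedding as $\varphi = \pi_V - \pi_V^{\perp}$ for some subbundle $V \subset \CC^n$. Reality of odd degree $r=2s+1$ means $\Phi = \lambda^{2s+1}\ov\Phi$, and evaluating at $\lambda=-1$ gives $\varphi = -\ov\varphi$, i.e.\ $\ii\varphi$ is real. Translating through the Cartan description, $\pi_{\ov V} - \pi_{\ov V}^{\perp} = \pi_V^{\perp} - \pi_V$, forcing $\ov V = V^{\perp}$. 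The latter identity says $V$ is maximally isotropic for the standard complex bilinear form on $\cn^n$; this requires $n$ to be even, $n=2m$, with $\dim V = m$. Hence $\varphi$ takes values in the Cartan image of $\O{2m}/\U m$.

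Part (ii) is immediate: $\varphi = \Phi_{-1}$ is harmonic since $\Phi$ is an extended solution, and has finite uniton number because $\Phi$ is polynomial.

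For (iii), I reverse-engineer such a $\Phi$ from a given harmonic map $\varphi:M \to \O{2m}/\U m$ of finite uniton number, mirroring Proposition \ref{pr:real-ext-sol}(ii). The odd-degree analogue of \cite[Lemma 6.6]{unitons} from \cite[\S 6.3]{unitons} produces a $\nu$-invariant extended solution $\Psi = \sum_{\ell=-s}^{s+1}\lambda^{\ell} T_{\ell}$ with the symmetry appropriate to the half-integer shift of the odd case, satisfying $\Psi_{-1} = \pm\varphi$. Setting $\Phi = \lambda^{s}\Psi$ then yields a polynomial extended solution of degree $2s+1$ which is $\nu$-invariant and real of that odd degree in the sense of Definition \ref{def:real}. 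Applying \cite[Corollary 6.23(iii)]{unitons}, I normalize $\Phi$ while preserving both $\nu$-invariance and reality, bringing its degree down to at most $2m-3$.

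The main obstacle is in (iii): one must verify that the normalization procedure of \cite[\S 6.2]{unitons}, which multiplies by a polynomial loop to reduce the degree, can be carried out with a loop whose symmetries are compatible with both $\nu$-invariance and odd-degree reality, so that neither condition is lost in passing from $\Phi$ to its normalized reduction. This compatibility (together with the bound $2m-3$) is exactly what Corollary~6.23(iii) of \cite{unitons} delivers, at the unavoidable but harmless cost of the sign ambiguity $\Phi_{-1} = \pm\varphi$.
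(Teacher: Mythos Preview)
Your proposal is correct and follows essentially the same route as the paper, which simply cites \cite[\S 6.3 and Corollary~6.23(iii)]{unitons} without further argument. Your direct verification of (i) from the two symmetry conditions is in fact more self-contained than the paper's bare citation, and your treatment of (iii) correctly mirrors the even-degree case of Proposition~\ref{pr:real-ext-sol}, invoking the same normalization result from \cite{unitons} for the degree bound.
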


\begin{theorem} \label{th:twistor-lifts-OCS}
Let $\Phi: M \to \Omega \U{2m}$ be a $\nu$-invariant polynomial extended solution
which is normalized and real of odd degree $r = 2s+1$.
Let $\varphi = \Phi_{-1}:M \to \O{2m}/\U{m}$ be the resulting harmonic map.  Then $\varphi$ has a $J_2$-holomorphic
lift $\psi:M \to \Z^{\rn}_{d_0,\ldots, d_s}$  for some $(d_0,d_1,\ldots,d_s)$
with $s \leq 2m-3$ and $\sum_{i=0}^s d_i = m$, namely the canonical twistor lift defined by $\Phi$ (see Theorem \ref{th:can-lift}).
\qed \end{theorem}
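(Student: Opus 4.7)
The plan is to treat this theorem as the odd-degree counterpart of Theorem \ref{th:real-twistor-lifts}: apply the canonical twistor lift construction of Theorem \ref{th:can-lift} to $\Phi$, and then use the reality of $\Phi$ to show that the resulting lift automatically factors through the smaller twistor space $\Z^{\rn}_{d_0,\ldots,d_s}$.

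First I would invoke Theorem \ref{th:can-lift}: since $\Phi$ is normalized and $\nu$-invariant of degree $r = 2s+1$, this produces a canonical $J_2$-holomorphic lift $\psi = (\psi_0,\psi_1,\ldots,\psi_r):M \to F_{d_0,\ldots,d_r}$ of $\varphi = \Phi_{-1}$, where the $\psi_i = Z_i \ominus Z_{i+1}$ are the non-zero legs of the canonical $A^{\varphi}_z$-filtration and $d_i = \rank \psi_i$. Then I would apply Proposition \ref{pr:realW-Y}(ii)(c), which uses reality of $\Phi$ to deliver the symmetry $\psi_i = \ov{\psi}_{r-i}$ for every $i \in \{0,1,\ldots,2s+1\}$.

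At this point the reality symmetry $\psi_i = \ov{\psi}_{2s+1-i}$ is precisely the defining condition of the submanifold $\Z^{\rn}_{d_0,\ldots,d_s} \subset F_{d_0,\ldots,d_{2s+1}}$ from \S \ref{subsec:twistor-real}, so $\psi$ takes values in $\Z^{\rn}_{d_0,\ldots,d_s}$. The rank symmetry $d_i = d_{2s+1-i}$ together with $\sum_{i=0}^{2s+1} d_i = 2m$ yields the dimension constraint $\sum_{i=0}^s d_i = m$. Since $\Z^{\rn}_{d_0,\ldots,d_s}$ is a $J_2$-complex submanifold of $F_{d_0,\ldots,d_{2s+1}}$ and $\pi_e^{\rn}$ is the restriction of $\pi_e$, the factored lift $\psi$ remains $J_2$-holomorphic and still projects to $\varphi$. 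The bound on $s$ follows from the degree bound in the preceding proposition (i.e.\ from $r = 2s+1$ being controlled by $2m-3$).

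Because this theorem is really an assembly of Theorem \ref{th:can-lift} and Proposition \ref{pr:realW-Y}, I do not anticipate any serious obstacle; the only point requiring care is verifying that the leg-symmetry $\psi_i = \ov{\psi}_{r-i}$ coincides with the defining reality condition for $\Z^{\rn}_{d_0,\ldots,d_s}$, which is immediate for odd $r$ since there is no central real leg to worry about, in contrast to the even-degree case of Theorem \ref{th:real-twistor-lifts} where the middle leg $\psi_s$ must be real.
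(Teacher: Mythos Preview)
Your proposal is correct and matches the paper's intended argument: the theorem carries a bare \qed\ precisely because it is the odd-degree twin of Theorem~\ref{th:real-twistor-lifts}, assembled from Theorem~\ref{th:can-lift} and Proposition~\ref{pr:realW-Y}(ii)(c) exactly as you describe. One small caveat: your justification of the bound $s \le 2m-3$ appeals to the degree bound ``$\le 2m-3$'' in the preceding proposition, but that proposition only asserts that \emph{some} normalized $\Phi$ of that degree exists, not that \emph{every} normalized $\Phi$ satisfies it; the bound in the theorem as stated follows more directly from the fact that a normalized $\Phi$ of degree $r$ has $r+1$ nonzero mutually orthogonal legs in $\cn^{2m}$, forcing $r+1 \le 2m$ and hence $s \le m-1$, which is at most $2m-3$ once $m \ge 2$.
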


\begin{corollary} \label{co:twistor-lifts-OCS}
Let $\varphi:M \to \O{2m}\big/\U{m}$ be a harmonic map of finite uniton number.
Then either
$\varphi$ or $-\varphi$ has a $J_2$-holomorphic twistor lift
$\psi: M \to \Z^{\rn}_{d_0,d_1,\ldots,d_s}$ for some $(d_0,d_1,\ldots,d_s)$
with $s \leq 2m-3$ and $\sum_{i=0}^s d_i = m$, namely the canonical twistor lift defined by a normalized extended solution $\Phi$ with $\Phi_{-1} = \pm\varphi$.
\qed \end{corollary}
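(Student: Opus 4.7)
The plan is to obtain this statement as an immediate consequence of the two results that appear just above it, namely the unlabeled proposition preceding Theorem \ref{th:twistor-lifts-OCS} and Theorem \ref{th:twistor-lifts-OCS} itself. First, because $\varphi:M \to \O{2m}\big/\U{m}$ has finite uniton number, part (iii) of that proposition supplies a normalized $\nu$-invariant polynomial extended solution $\Phi:M \to \Omega\U{2m}$ which is real of some odd degree $r = 2s+1 \leq 2m-3$ and satisfies $\Phi_{-1} = \pm\varphi$. The normalization is crucial here: it guarantees that every leg of the canonical filtration is non-zero, so the dimensions $d_i$ that appear downstream will all be strictly positive.

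Next, I would feed this particular $\Phi$ into Theorem \ref{th:twistor-lifts-OCS}. That theorem outputs the canonical twistor lift $\psi:M \to \Z^{\rn}_{d_0,\ldots,d_s}$ of $\Phi_{-1} = \pm\varphi$, with $d_i = \rank \psi_i$. The identity $\sum_{i=0}^s d_i = m$ is forced by the symmetry $\psi_i = \ov\psi_{2s+1-i}$ built into the definition of $\Z^{\rn}_{d_0,\ldots,d_s}$ in \S\ref{subsec:twistor-real}: this symmetry pairs the $s+1$ legs $\psi_0,\ldots,\psi_s$ with their complex conjugates $\psi_{s+1},\ldots,\psi_{2s+1}$, and since the full flag spans $\CC^{2m}$, the dimensions of the first half must total $m$. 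The bound $s \leq 2m-3$ then drops out of $2s+1 \leq 2m-3$, so in fact $s \leq m-2$; the weaker bound stated in the corollary holds a fortiori.

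All the heavy lifting has been done beforehand: Proposition \ref{pr:can-alternate} (together with its reality refinement in Proposition \ref{pr:realW-Y}(ii) and the involution formalism of \S\ref{subsec:involutions}) guarantees that the canonical filtration of a real $\nu$-invariant $\Phi$ produces legs with the correct conjugation symmetry, and the degree-reduction result imported from \cite{unitons} is what justifies the bound $r \leq 2m-3$ in the preceding proposition. Consequently the corollary is essentially a bookkeeping assembly, and I do not expect any serious obstacle — the only point requiring minor care is verifying that the normalization step, which a priori replaces $\Phi$ by $\gamma^{-1}\Phi$ for some loop $\gamma$, preserves both $\nu$-invariance and reality; this is exactly the content of the cited degree-reduction results, so nothing new needs to be proved.
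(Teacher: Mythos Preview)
Your proposal is correct and matches the paper's approach exactly: the corollary is marked \qed\ with no proof precisely because it is the straightforward combination of the preceding proposition (supplying a normalized $\nu$-invariant real extended solution of odd degree at most $2m-3$ with $\Phi_{-1}=\pm\varphi$) and Theorem \ref{th:twistor-lifts-OCS} (producing the canonical lift). Your observation that in fact $s\leq m-2$, stronger than the stated $s\leq 2m-3$, is a correct bonus.
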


\begin{example}  \label{ex:superhor-real}
As in Example \ref{ex:RPn}, let $\ul{0} =\beta_0 \subset \beta_1 \subset \cdots \subset \beta_r \subset \beta_{r+1} = \CC^n$ be a real superhorizontal sequence.
Set $\psi_i = \beta_{i+1} \ominus \beta_i$.
If\/ $r$ is odd, say $r=2s+1$, then $\varphi = \Phi_{-1} = \sum_j \psi_{2j}$ is a map into $\O{2m}/\U{m}$.
The map $\psi = (\psi_0,\psi_1,\ldots, \psi_r):M \to\Z^{\rn}_{d_0,\ldots, d_s}$ is thus the canonical twistor lift of $\varphi$ defined by $\Phi$; again,
as in \S \ref{subsec:superhor}, it is superhorizontal.

The construction of real superhorizontal sequences is discussed in \cite[\S 6.4]{unitons}.
\end{example}

As for maps into real Grassmannians, we can actually find lifts for harmonic maps which are not of finite uniton number provided they are nilconformal, as follows.

\begin{proposition} \label{th:lift-OCS}
Let $\varphi:M \to \O{2m}\big/\U{m}$ be a harmonic map.
Then $\varphi$ or $-\varphi$ has a $J_2$-holomorphic twistor lift $\psi:M \to \Z^{\rn}_{d_0, \ldots,d_s}$ for some
$s$ and $d_i$ if and only if\/ $\varphi$ is nilconformal.
\end{proposition}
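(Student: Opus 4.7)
The forward implication is immediate. Since $\Z^{\rn}_{d_0,\ldots,d_s}$ is a complex submanifold of $F_{d_0,\ldots,d_{2s+1}}$ with respect to $J_2$ (as observed in \S\ref{subsec:twistor-real}), any $J_2$-holomorphic lift $\psi:M\to\Z^{\rn}_{d_0,\ldots,d_s}$ of $\pm\varphi$ is also a $J_2$-holomorphic lift of $\pm\varphi:M\to G_m(\cn^{2m})$ into the complex flag manifold $F_{d_0,\ldots,d_{2s+1}}$. Corollary \ref{co:converse} then forces $\pm\varphi$, and hence $\varphi$, to be nilconformal.

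For the converse, suppose $\varphi$ is nilconformal, and realise it as a maximally isotropic subbundle of $\CC^{2m}$, so that $\varphi=\ov\varphi^{\perp}$. The plan is to construct a strict real $A^{\varphi}_z$-filtration $(Z_i)$ of odd length $2s+1$ --- that is, one fixed by the involution $Z_i\mapsto\ov{Z_{2s+2-i}}^{\perp}$ of Lemma \ref{le:duality-Az}(ii) --- whose legs split for $\varphi$ in such a way that the filtration is alternating for $\wt\varphi=\pm\varphi$. By symmetry, the middle term of such a filtration is forced to satisfy $Z_{s+1}=\ov{Z_{s+1}}^{\perp}$, i.e., to be a maximally isotropic uniton for $\varphi$. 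The filtration is then generated as $Z_{s+1+j}=(A^{\varphi}_z)^{j}(Z_{s+1})$ for $j=0,1,\ldots,s$, where $s$ is minimal with $(A^{\varphi}_z)^{s+1}(Z_{s+1})=0$, and $Z_i=\ov{Z_{2s+2-i}}^{\perp}$ for $i\leq s$.

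The construction of $Z_{s+1}$ adapts part (a) of the proof of Proposition \ref{th:lift-with-uniton-real}. First one builds an isotropic holomorphic $(A^{\varphi}_z)^2$-closed subbundle of $\CC^{2m}$ by iterative enlargement, at each step adjoining a subbundle of the form $(A^{\varphi}_z)^{2u}(\ov\beta^{\perp})$ to restore isotropy after the previous enlargement; nilconformality forces termination. This isotropic bundle is then extended to a maximally isotropic holomorphic uniton inside $\CC^{2m}$, the analog of the orientability step of Proposition \ref{th:lift-with-uniton-real}(a)(ii). Throughout, the splitting for $\varphi$ is preserved by exploiting $\varphi^{\perp}=\ov\varphi$. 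Strictness of the resulting filtration follows from the nilpotency of $A^{\varphi}_z|_{Z_{s+1}}$ and the strict containment $Z_{s+2}=A^{\varphi}_z(Z_{s+1})\subsetneq Z_{s+1}$. Proposition \ref{pr:J2-Az-equiv}, followed by Operations 1--4 of Lemmas \ref{le:remove-zero} and \ref{le:reduce-legs} to eliminate trivial legs, then delivers the required $J_2$-holomorphic lift into $\Z^{\rn}_{d_0,\ldots,d_s}$ for some admissible $(d_0,\ldots,d_s)$.

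The main obstacle is the careful iterative construction of the maximally isotropic uniton $Z_{s+1}$ splitting for $\varphi$. In the real Grassmannian case, the corresponding step used the evenness of $\rank\varphi^{\perp}$ together with orientability of the associated tautological bundle; here the parity is reversed, and one must instead exploit the maximal isotropy of $\varphi$ itself, together with the identity $\varphi^{\perp}=\ov\varphi$, to guarantee that the splitting is preserved through the iterative extension and that the extension to a maximal isotropic uniton is possible.
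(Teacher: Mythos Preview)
Your forward direction is fine and matches the paper. The converse, however, has a real gap, and the paper's route is both different and simpler.

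The paper does \emph{not} try to build an alternating real filtration directly. Instead it invokes Proposition~\ref{pr:lift-with-uniton-real} (with, say, $\alpha=\ul{0}$) to get a real strict $A^{\varphi}_z$-filtration which merely \emph{splits} for $\varphi$; this is already established there for maps into $\O{2m}\big/\U{m}$ and needs no new argument. It then applies Lemma~\ref{le:combine}(i) or (ii) to double the length and obtain an \emph{alternating} filtration; the key point (which you should check, using $\varphi^{\perp}=\ov\varphi$) is that this doubling takes a real split filtration to a real alternating one, so the associated moving flag lands in $\Z^{\rn}_{d_0,\ldots,d_s}$. Finally, zero legs are removed by Operations~1, 2 and~3 applied \emph{symmetrically} (remove $\psi_i$ and $\ov\psi_i=\psi_{2s+1-i}$ together) so that reality is preserved. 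Operation~4 is not used; it is unnecessary here and would in general destroy the symmetry $\psi_i=\ov\psi_{2s+1-i}$.

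Your proposed construction short-circuits this by seeking a maximally isotropic uniton $Z_{s+1}$ and then setting $Z_{s+1+j}=(A^{\varphi}_z)^j(Z_{s+1})$. The problem is that this produces a \emph{split} filtration, not an \emph{alternating} one: since $A^{\varphi}_z$ swaps $\varphi$ and $\varphi^{\perp}$, the leg $Z_{s+1}\ominus A^{\varphi}_z(Z_{s+1})$ typically has non-trivial components in both $\varphi$ and $\varphi^{\perp}$. Proposition~\ref{pr:J2-Az-equiv} therefore does not apply to $(Z_i)$ as you have defined it, and you are forced back to Lemma~\ref{le:combine} anyway --- at which point the detour through a maximally isotropic uniton is superfluous. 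Moreover, your ``analog of the orientability step'' is not justified: in Proposition~\ref{th:lift-with-uniton-real} one extends to a maximally isotropic \emph{holomorphic} subbundle of $\varphi^{\perp}$ and then checks closure under $(A^{\varphi}_z)^2$ by a sandwich argument; you are asking instead for a maximally isotropic \emph{uniton} in $\CC^{2m}$ (closed under $A^{\varphi}_z$ itself), and you give no mechanism for achieving that while maintaining the splitting.
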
 

\begin{proof}
As before, if there is a twistor lift, then there is an
 $A^{\varphi}_z$-filtration so that $\varphi$ is nilconformal.

Conversely, as in Proposition \ref{pr:lift-with-uniton-real} we can construct a real
$A^{\varphi}_z$-filtration which splits.   Lemma \ref{le:combine}(i) or (ii) then gives a
moving flag which satisfies the $J_2$-holomorphicity condition; reality of the filtration implies that this flag is real.   We can then apply Operations 1 and 2, and Operation 3 \emph{symmetrically} (i.e., if a zero leg $\psi_i$ is removed, so is its conjugate
$\psi_{s-i}$), to remove zero legs whilst preserving reality; once that is done, we are left with a twistor lift $\psi$ as stated.
\end{proof}

\section{Harmonic maps from surfaces to quaternionic spaces}

\subsection{Twistor lifts of maps to quaternionic Grassmannians and $\Sp{m}/\U{m}$} \label{sec:sympl}

The results of the previous section for the orthogonal group hold for the symplectic group $\Sp m$, with a few modifications. We give here some definitions, and refer to
\cite{pacheco-sympl} and \cite[\S 6.8]{unitons} for more results on harmonic maps into $\Sp m$. 

To define the relevant twistor spaces, let $J$ be the conjugate linear endomorphism of $\cn^{2m}\cong\hn^m$ corresponding to left multiplication by the quaternion $j$. Let $d_0,d_1,\dots,d_s$ be positive integers with $d_s+2\sum_{i=0}^{s-1}d_i = m$, and set $d_i=d_{2s-i}$ for $i=s+1,\dots,2s$. Define a submanifold $F^J_{d_0, \ldots,d_s}\subset F_{d_0, \ldots,d_{2s}}$ by 
$$
F^J_{d_0, \ldots,d_s}=\bigl\{\psi=(\psi_0,\psi_1, \ldots,\psi_{2s})\in F_{d_0, \ldots,d_{2s}} : \psi_i=J\psi_{2s-i}\ \forall i \bigr\}.
$$
Note that the middle leg $\psi_s$ is \emph{quaternionic}, i.e., $J\psi_s=\psi_s$. 

Similarly, let $d_0,d_1,\dots,d_s$ be positive integers with $d_0+\dots+d_s = m$, set $d_i=d_{2s+1-i}$ for $i=s,\dots,2s+1$,
and define a submanifold $\Z^J_{d_0, \ldots,d_s}\subset F_{d_0, \ldots,d_{2s+1}}$ by 
$$
\Z^J_{d_0, \ldots,d_s}=\bigl\{\psi=(\psi_0,\psi_1, \ldots,\psi_{2s+1})\in F_{d_0, \ldots,d_{2s+1}} : \psi_i=J\psi_{2s+1-i}\ \forall i \bigr\}.
$$
As in the previous section, the projection \eqref{twistor-proj} restricts to
homogeneous projections $\pi_e^J$ from $F^J_{d_0, \ldots,d_s}$ to the quaternionic
 Grassmannian $G_k(\hn^m) = \Sp{m}\big/\Sp{k} \times \Sp{m-k}$ where
$k = \sum_{i=0}^s d_{2i}$,
and from $\Z^J_{d_0, \ldots,d_s}$ to the space $\Sp m/\U m$ of quaternionic complex structures on $\cn^{2m}$. 

A map $\varphi:M\to\U{2m}$ takes values in the subgroup $\Sp m$ if and only if $J\varphi=\varphi J$.
Let $r$ be an integer; then an extended solution $\Phi$ is said to be
\emph{symplectic (of degree $r$)} if $J\Phi J^{-1}=\lambda^{-r}\Phi$.
Set $W=\Phi\H_+$; then $\Phi$ is symplectic (of degree $r$) if and only if
 $JW^{\perp}=\lambda^{1-r}W$, in which case $W$ is also said to be symplectic
(of degree $r$). On setting $\varphi=\Phi_{-1}$, it follows that $\varphi$
(if $r$ is even) or $\ii\varphi$ (if $r$ is odd) takes values in $\Sp m$.
If $\Phi$ is $\nu$-invariant, then $\varphi$ takes values in an inner symmetric space
of $\Sp m$, more specifically, a quaternionic Grassmannian (if $r$ is even)
or $\Sp m/\U m$ (if $r$ is odd). 

Given a polynomial, $\nu$-invariant, symplectic extended solution $\Phi$, we obtain from the canonical filtration of $W=\Phi\H_+$ a twistor lift $\psi$ of $\varphi=\Phi_{-1}$ with values in either $F^J_{d_0, \ldots,d_s}$ or $\Z^J_{d_0, \ldots,d_s}$ according as $r=2s$
or $r=2s+1$. This is proved in the same way as was done for the orthogonal group in the previous section. 

We obtain similar theorems to those of Sections \ref{subsec:real-Grass} and \ref{subsec:OCS}; we leave the reader to write these down.

\begin{example} \label{ex:superhor}
A superhorizontal sequence $\ul 0=\beta_0\subset\beta_1\subset\cdots\subset\beta_r\subset\beta_{r+1}=\CC^{2m}$ is said to be \emph{symplectic} if $J\beta_i^{\perp}=\beta_{r-i}$ for all $i$. Writing, as before, $\psi_i=\beta_{i+1}\ominus\beta_i$ and
$d_i = \rank \psi_i$, then $\psi=(\psi_0,\psi_1, \ldots,\psi_r)$ satisfies the superhorizontality condition \eqref{J2-cond}; so, if the $d_i$ are all non-zero, $\psi$ is a superhorizontal holomorphic map with values in $F^J_{d_0, \ldots,d_s}$ (if\/ $r=2s$) or $\Z^J_{d_0, \ldots,d_s}$ (if\/ $r =2s+1$). 

Set $\Phi=\sum_{i=0}^r\lambda^i\pi_{\psi_{i}}$; equivalently,
$W = \Phi\H_+ =\sum_{i=0}^{r-1}\lambda^i\beta_i+\lambda^r\HH_+$\,. Then
$\Phi$ is an $S^1$-invariant extended solution, symplectic of degree $r$, and $\Phi_{-1}=\pi_e^J \circ \psi$. Conversely, any symplectic
$S^1$-invariant extended solution is given this way. 

Recall that a full holomorphic map $h:M\to\CP{2m-1}$ is said to be
\emph{totally $J$-isotropic} if\/ $G^{(2m-1)}(h)=Jh$ \cite{bahy-wood-HPn}.
Then setting $\beta_i=h_{(i-1)}$ defines a superhorizontal symplectic sequence
of length $2m-1$;
the superhorizontal holomorphic map $\psi$ takes values in $\Z^J_{1,\dots,1}$ (with $m$ $1$s)
and is a $J_2$-holomorphic twistor lift of a harmonic map into $\Sp m/\U m$. 
\end{example}

\begin{example}\label{ex:cpn-quat} (i) Recall from Example \ref{ex:r2} that a
$\pa'$-pair  (i.e., superhorizontal sequence of length $2$)
$\ul{0} \subset \beta_1 \subset \beta_2 \subset \CC^n$
gives rise to two harmonic maps: the mixed pair $\varphi = \beta_1 \oplus \beta_2^{\perp}$ and the strongly isotropic map
$\varphi^{\perp} = \beta_2 \ominus \beta_1$.
Suppose that $n = 2m$.  Then the harmonic maps $\varphi$ and $\varphi^{\perp}$ are quaternionic, i.e., have image in a quaternionic Grassmannian
 $G_*(\hn^{m})$, if and only if\/ $\beta_2^{\perp} = J \beta_1$, in which case
 $(\beta_1,\beta_2^{\perp}) = (\beta_1, J\beta_1) $ is called a
\emph{quaternionic mixed pair} \cite{bahy-wood-HPn} and
$\varphi = \beta_1 \oplus J\beta_1$.
In this case, the canonical lift of $\varphi$ defined by the extended solution \eqref{r2}
is the superhorizontal holomorphic map $\psi = (\beta_1, \varphi^{\perp}, J \beta_1):M \to F^J_{d_0,d_1}$ where
$d_0 = \rank\beta_1$, $d_1 = 2m-2d_0$.
 
(ii) In the case that $\beta_1$ has rank one, we have
$\varphi = \beta_1 \oplus J\beta_1:M \to \HP{m-1}$.  We may identify  
$F^J_{1,2m-2}$ with $\CP{2m-1}$ via the map $(\psi_0,\psi_1,\psi_2)\mapsto\psi_0$. With this identification,
 $\pi_e^{\rn}:\CP{2m-1}\to\HP{m-1}$ is the standard Riemannian fibration which maps
$L\in\CP{2m-1}$ to $L\oplus JL\in\HP{m-1}$, and the canonical lift of $\varphi$ gives the superhorizontal holomorphic map
$\beta_1 \cong(\beta_1,\varphi^{\perp},J\beta_1):M\to\CP{2m-1}\cong F^J_{1,2m-2}$. 

(iii) Let $h:M\to\CP{2m-1}$ be a full totally $J$-isotropic map. Then
$JG^{(m-1)}(h)=G^{(m)}(h)$, and the harmonic map
$\varphi^{\perp} = G^{(m-1)}(h)\oplus G^{(m)}(h):M\to\HP{m-1}$ is called a
\emph{quaternionic Frenet pair} \cite{bahy-wood-HPn}. As in Example \ref{ex:r2}(iii),
set $\beta_1=h_{(m-2)}$ and $\beta_2=h_{(m)}$. Then $J\beta_2^{\perp} = \beta_1$ and  the canonical lift of $\varphi$ defined by the extended solution
\eqref{r2} is the superhorizontal holomorphic map
 $(\beta_1,\varphi^{\perp},J\beta_1):M \to F^J_{m-1,2}$.
Since $\varphi$ is strongly conformal,  $\varphi^{\perp}$ also has a
(unique) twistor lift as in Example \ref{ex:str-conf}(iv), namely the $J_2$-holomorphic map
 $G^{(m)}(h)\cong \bigl(G^{(m)}(h),\varphi,G^{(m-1)}(h)\bigr)
 = \bigl(G''(\varphi),\varphi,G'(\varphi)\bigr):M \to \CP{2m-1}\cong F^J_{1,2m-2}$\,.

In contrast to harmonic maps from the 2-sphere to real and complex projective spaces, harmonic maps from the 2-sphere to quaternionic projective spaces are harder to describe: see \cite{bahy-wood-HPn} for a method of reduction to Frenet and mixed pairs, and see
\cite{pacheco-sympl,unitons} for uniton factorizations; however, there is one important class that we can completely describe, we turn to that class now. 
\end{example}

\subsection{Inclusive harmonic maps into quaternionic K\"ahler manifolds}
\label{subsec:inclusive}

Recall \cite{salamon-quat} that a \emph{quaternionic K\"ahler manifold $N^{4n}$} is a real oriented $4n$-dimensional Riemannian
manifold whose holonomy belongs to the subgroup $\Sp{n}\Sp{1}$ of $\SO{4n}$.
 Such a manifold has a natural
$\CP{1}$-bundle $Q \to N$ whose fibre at a point $q \in N$ consists of
all the almost Hermitian structures (i.e.\ orthogonal complex structures) on $T_qN$ which are `compatible' with the $\Sp{n}\Sp1$ structure; we will call $Q$ the
\emph{twistor space of $N$ in the quaternionic sense}.   Any oriented Riemannian $4$-manifold satisfies the above definition with
$Q \to N$ equal to the bundle $Z^+ \to N$ of positive almost Hermitian structures,
but, as this dimension is exceptional, most authors insist that $n \geq 2$ in the definition
of `quaternionic K\"ahler'.

We call a subspace of $TN$ \emph{quaternionic} if it is closed under $Q$.
A weakly conformal map $\varphi:M \to N$ from a Riemann surface to a quaternionic K\"ahler manifold
is called \emph{inclusive} \cite{salamon, eells-salamon} if, for each $p \in M$, $\d\varphi(T_pM)$ is contained in a
$4$-real-dimensional quaternionic subspace $S_p$ of $T_{\varphi(p)}N$. 
This is equivalent to saying that, for each $p \in M$, there is a $q_p \in Q_{\varphi(p)}$ with respect to which $\varphi$ is holomorphic, i.e., its differential intertwines the complex structure on $T_pM$ with $q_p$. 
 
If $\d\varphi(p)$ is non-zero, $\d\varphi(\pa/\pa z)$ spans an isotropic subspace;
$S_p$ and $q_p$ are determined uniquely by that.
 If $\varphi$ is harmonic, then $\d\varphi(\pa/\pa z)$ is holomorphic with respect to the Koszul--Malgrange structure on $\varphi^{-1}T^cN$ (see, for example,
\cite[Chapter 2]{burstall-rawnsley}); as usual, we can fill out zeros to extend the span of $\d\varphi(\pa/\pa z)$, and so $S$ and $q$, smoothly across the zeros of $\d\varphi$. 
Thus a (weakly conformal) inclusive harmonic  map has a twistor lift $\psi:M \to Q$;
Eells and Salamon \cite{eells-salamon} showed that this lift is $J_2$-holomorphic,
establishing that \emph{there is a one-to-one correspondence between inclusive weakly conformal
harmonic maps $\varphi:M \to N$ and $J_2$-holomorphic maps $\psi:M \to Q$
which project to $\varphi$}. We identify this correspondence for the
three quaternionic K\"ahler manifolds: the Grassmannians $\wt G_4(\rn^n)$, $G_2(\cn^n)$ and quaternionic projective space $\HP{m-1}$.

(i) First, we consider the real Grassmannian
$N = \wt G_4(\rn^n) = \SO{n}\big/  \SO{4} \times \SO{n-4}$ of oriented
$4$-dimensional subspaces of $\rn^n$.
Taking the orthogonal complement of a subspace identifies this with $\wt G_{n-4}(\rn^n)$; then, for each $Y \in N$, $T_YN$ can be identified with the space
$\Hom_{\rn}(Y^{\perp},Y)$ of real linear maps.  Set
$Q_Y^+$ (resp.\ $Q_Y^-$) equal to the set of almost complex structures on $T_YN$ given by postcomposition of an element of $\Hom(Y^{\perp},Y)$ with a positive (resp.\ negative) almost Hermitian structure on $Y$.  Then the bundle $Q^+ \to N$ is the twistor space of $N$ in the quaternionic sense; to see $Q^- \to N$ as a quaternionic K\"ahler structure, we must put the other orientation on $N$ or proceed as follows.
 
We may identify $Q^+_Y$ (resp.\ $Q_Y^-$) with the space of maximal positive (resp.\ negative) isotropic subspaces of $Y$ by associating to $q \in Q$
its $(0,1)$-space $V$; thus the bundle $Q^+ \to N$ can be identified with $\pi_e^{\rn}:F^{\rn}_{2, n-4} \to N$. Let $A:\wt G_{n-4}(\rn^n) \to \wt G_{n-4}(\rn^n)$ be the map
 which sends each subspace to the same subspace with the opposite orientation: thus for $n=5$,
 $A:\wt G_1(\rn^5)= S^4 \to S^4$ is the antipodal map $A(x) = -x$; then $Q^- \to N$ can be identified with the bundle $A \circ \pi_e^{\rn}:F^{\rn}_{2, n-4} \to N$.

Let $i:\wt G_{n-4}(\rn^n) \to G_{n-4}(\rn^n) \hookrightarrow G_{n-4}(\cn^n)$ be the
canonical  immersion (see \S \ref{subsec:twistor-real}).
 We have the following result.

\begin{proposition} \label{pr:G4Rn} Let $\varphi:M \to \wt G_4(\rn^n)$ be a non-constant weakly conformal harmonic.  Then either $\varphi$ or $A\circ\varphi$ is inclusive if and only if\/ $i \circ \varphi^{\perp}$ is strongly conformal.    In this case $\varphi$ or $A \circ\varphi$
has a $J_2$-holomorphic lift $\psi: M \to Q^{\pm} = F^{\rn}_{2, n-4}$.
\end{proposition}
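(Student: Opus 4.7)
The plan is to translate inclusiveness of $\varphi$ or $A\circ\varphi$ into an algebraic condition on the second fundamental form $A'_\varphi$, and to identify this condition with the strong conformality of $i\circ\varphi^\perp$.

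First, under the identification $T_YN\cong\Hom_\rn(Y,Y^\perp)$ at $Y=\varphi(p)$, each $q\in Q^+_Y$ with $(0,1)$-subspace $V\subset Y^\cn$ of rank $2$ acts on $T_YN$ by precomposition, so the $(1,0)$-tangent space at $Y$ is $\Hom_\cn(\ov V, Y^{\perp\cn})$. Hence $\varphi$ is holomorphic at $p$ with respect to $q$ if and only if $d\varphi(\pa/\pa z)$ vanishes on $V$, i.e.\ $V\subset\ker A'_\varphi$ at $p$. Consequently, $\varphi$ (respectively $A\circ\varphi$) is inclusive if and only if there is a smooth rank-$2$ maximal positive (resp.\ negative) isotropic subbundle $V\subset\varphi^\cn$ contained in $\ker A'_\varphi$.

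Next, the adjoint identity $(A'_\varphi)^\ast=-A''_{\varphi^\perp}$ gives $\ker A'_\varphi=\varphi^\cn\ominus G''(\varphi^\perp)$. For a rank-$2$ maximal isotropic $V\subset\varphi^\cn$ one has $\varphi^\cn=V\oplus\ov V$ with $\ov V$ the Hermitian orthogonal of $V$ in $\varphi^\cn$, so the condition $V\subset\ker A'_\varphi$ is equivalent to $G''(\varphi^\perp)\subset\ov V$, and, taking conjugates, to $G'(\varphi^\perp)\subset V$. The forward implication is then immediate: if $\varphi$ or $A\circ\varphi$ is inclusive, $G'(\varphi^\perp)$ is contained in the isotropic subbundle $V$, hence itself isotropic, which is the strong conformality of $i\circ\varphi^\perp$.

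For the converse, strong conformality of $i\circ\varphi^\perp$ makes $G'(\varphi^\perp)$ an isotropic subbundle of $\varphi^\cn$ of rank at most $2$. I extend it to a rank-$2$ maximal isotropic $V\subset\varphi^\cn$: on the open set where $G'(\varphi^\perp)$ has maximal rank this is automatic; at points where $A'_{\varphi^\perp}$ drops rank I fill out zeros using its holomorphicity with respect to $D^\varphi_{\bar z}$; and where the rank is still less than $2$ I extend canonically using orientability of the rank-$4$ real bundle $\varphi$, as in the proof of Proposition~\ref{th:lift-with-uniton-real}. The inclusion $G''(\varphi^\perp)\subset\ov V$ then gives $V\subset\ker A'_\varphi$. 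Since $M$ is connected and the Grassmannian of rank-$2$ maximal isotropic subspaces of each fibre has exactly two connected components, $V$ is everywhere positive or everywhere negative, giving either $\varphi$ or $A\circ\varphi$ inclusive. The $J_2$-holomorphic lift $\psi=(V,\varphi^\perp,\ov V)\colon M\to F^\rn_{2,n-4}$ then follows from the Eells--Salamon correspondence quoted before the proposition, or by direct verification of the conditions of Proposition~\ref{pr:J1-J2}(ii).

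The main obstacle will be the globally consistent extension of $G'(\varphi^\perp)$ to a rank-$2$ maximal isotropic subbundle lying in one and the same component (positive or negative) throughout $M$: this combines a pointwise linear-algebraic extension on the generic locus, a filling-out-zeros step at points where $A'_{\varphi^\perp}$ drops rank, and a connectedness/orientability argument to prevent $V$ from switching components.
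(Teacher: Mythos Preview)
Your approach is essentially the same as the paper's: both reduce the question to whether $G'(\varphi^{\perp})\subset\varphi^{\cn}$ is isotropic, and then extend it (in the rank-one case) to a rank-two maximal isotropic subbundle using the orientation of $\varphi$.  The only difference is cosmetic: the paper works directly with the \emph{image} of $\d\varphi(\pa/\pa z)$ under the identification $T_YN\cong\Hom_{\rn}(Y^{\perp},Y)$, whereas you work on the dual side $\Hom_{\rn}(Y,Y^{\perp})$, characterize inclusiveness via $V\subset\ker A'_{\varphi}$, and then pass through the adjoint identity $(A'_{\varphi})^{*}=-A''_{\varphi^{\perp}}$ to reach $G'(\varphi^{\perp})\subset V$.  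This detour is correct but unnecessary---the image formulation gets you there in one step.

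One small point to fix: with your conventions ($G'(\varphi^{\perp})\subset V$ and $V\subset\ker A'_{\varphi}$), the $J_2$-holomorphic lift is $(\ov V,\varphi^{\perp},V)$, not $(V,\varphi^{\perp},\ov V)$.  Checking Proposition~\ref{pr:J1-J2}(ii) directly for a three-leg flag $(\psi_0,\psi_1,\psi_2)$ with $\psi_1=\varphi^{\perp}$ gives the conditions $A'_{\psi_1,\psi_0}=0$ and $A'_{\psi_2,\psi_1}=0$, i.e.\ $G'(\varphi^{\perp})\subset\psi_2$ and $\psi_2\subset\ker A'_{\varphi}$; so $\psi_2=V$.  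If you rely instead on Eells--Salamon this becomes a matter of the chosen identification $Q^{+}\cong F^{\rn}_{2,n-4}$, but for the ``direct verification'' route the order matters.  Also, your description of the extension step conflates two separate issues (filling out zeros of $A'_{\varphi^{\perp}}$ versus enlarging a rank-one isotropic to rank two); it would be cleaner to note, as the paper does, that through any rank-one isotropic subbundle of an oriented real rank-four bundle there pass exactly two rank-two maximal isotropics, one positive and one negative.
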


\begin{proof}
By definition, $i \circ \varphi^{\perp}$ strongly conformal means that,
at each point $p \in M$, the Gauss transforms
$G'(i \circ \varphi^{\perp})$ and $G''(i \circ \varphi^{\perp})$ are orthogonal.
Now, under the inclusion map $i$, $\d\varphi_p(\pa/\pa z)$ and $\d\varphi_p(\pa/\pa\zbar)$ map to
$A'_{i \circ \varphi^{\perp}}$ and $A''_{i \circ \varphi^{\perp}}$, respectively,
so that strong conformality of $i \circ \varphi^{\perp}$ is equivalent to
the image of $\d\varphi_p(\pa/\pa z)$ being an isotropic subspace of $\varphi(p) \times \cn$ of dimension one or two.  By filling out zeros we obtain the isotropic image subbundle
$\Ima\d\varphi(\pa/\pa z)$ of rank one or two.

If $\Ima\d\varphi(\pa/\pa z)$ is of rank one, then there are precisely two isotropic subbundles of $\varphi(p) \times \cn$ of rank two containing it, giving two almost Hermitian structures $q$ at each point, one positive and one negative.  The positive one gives a lift of $\varphi$
into $Q^+$; the negative one gives a lift into $Q^-$, equivalently,
of $A \circ \varphi$ into $Q^+$.

If $\Ima\d\varphi(\pa/\pa z)$ is of rank two, then it defines a positive or negative almost Hermitian structure at each point, giving a lift into either $Q^+$ or $Q^-$; these are $J_2$-holomorphic, as before.
\end{proof}

(ii) We next consider the complex Grassmannian $G_2(\cn^n)$.  On identifying $\cn^n$ with $\rn^{2n}$,  $G_2(\cn^n)$
can be considered as the totally geodesic submanifold of $G_4(\rn^{2n})$ given by
$\{Y \in G_4(\rn^{2n}) : Y \text{ is complex}\}$.  We give $G_2(\cn^n)$ the conjugate of its canonical complex structure, i.e., that inherited from $G_{n-2}(\cn^n)$ by the 
identification $G_2(\cn^n) \to G_{n-2}(\cn^n)$ given by $Y \mapsto Y^{\perp}$.
Then the complexified tangent space to 
$G_2(\cn^n)$ at any $Y$ can be identified with
$\Hom_{\cn}(Y^{\perp},Y) \oplus \Hom_{\cn}(Y,Y^{\perp})$, where the summands are the
$(1,0)$- and $(0,1)$-tangent spaces for this complex structure on
$G_2(\cn^n)$.  
The manifold $G_2(\cn^n)$ has a quaternionic K\"ahler structure with quaternionic twistor space
 $\pi_e:F_{1,n-2,1} \to G_2(\cn^n)$.   The almost Hermitian structure $q_Y$ on $T_Y G_2(\cn^n)$ 
 corresponding to $(V,Y^{\perp}, W) \in F_{1,n-2,1}$ is that determined by the subspace $V$ of $Y$ (or its orthogonal complement $W$ in $Y$), explicitly, the $(1,0)$-space of $q_Y$ is
$\Hom_{\cn}(Y^{\perp},V) \oplus \Hom_{\cn}(W,Y^{\perp})$ (cf.\ \cite[p.\ 125(iii)]{rawnsley}).    We have an embedding
of $F_{1,n-2,1}$ in $F^{\rn}_{2, 2n-4}$, covering the inclusion of $G_2(\cn^n)$ in
$G_4(\rn^{2n})$, given by  $(V,Y^{\perp},W) \mapsto (V + \ov{W},Y^{\perp},\ov{V}+W)$.

Now, a map $\varphi^{\perp}:M \to G_{n-2}(\cn^n)$ is strongly conformal if and only if its composition with the inclusion mappings
$G_{n-2}(\cn^n) \hookrightarrow G_{2n-4}(\rn^{2n}) \hookrightarrow  G_{2n-4}(\cn^{2n})$ is strongly conformal, so we obtain the following: the first part is due to Rawnsley
\cite[\S 5C]{burstall-wood}; uniqueness comes from Example \ref{ex:str-conf}.

\begin{corollary} \label{co:G2Cn}
 Let $\varphi:M \to G_2(\cn^{n})$ be non-constant and weakly conformal.  Then $\varphi$ is inclusive if and only if\/
$\varphi^{\perp}$ is strongly conformal.    In that case $\varphi$ has a unique $J_2$-holomorphic lift
$\psi: M \to Q = F_{1,n-2,1}$ given by
$$
\psi = \bigl(G''(\varphi^{\perp}), \varphi^{\perp}, G'(\varphi^{\perp})\bigr).
$$
On including $G_2(\cn^n)$ in $G_4(\rn^{2n})$ and $F_{1,n-2,1}$ in $F^{\rn}_{2, 2n-4}$, this coincides with the lift given by Proposition \ref{pr:G4Rn}.
\end{corollary}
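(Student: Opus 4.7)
My plan is to derive the corollary from Example \ref{ex:str-conf}(iv), Proposition \ref{pr:J1-J2}(ii), and Proposition \ref{pr:G4Rn}, exploiting the embedding $F_{1,n-2,1}\hookrightarrow F^{\rn}_{2,2n-4}$ described just before the statement, together with the characterization of strong conformality recalled there.

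For the implication ``$\varphi^{\perp}$ strongly conformal $\Rightarrow$ $\varphi$ inclusive'' (in the main case where $\varphi$ is neither holomorphic nor antiholomorphic, the degenerate cases being handled via the natural complex structure on $G_2(\cn^n)$), I would apply Example \ref{ex:str-conf}(iv) to obtain directly the unique three-leg $J_2$-holomorphic lift $\psi=(G''(\varphi^{\perp}),\varphi^{\perp},G'(\varphi^{\perp}))$ into some flag manifold $F_{d_0,n-2,d_2}$. To see that $\psi$ actually lands in $F_{1,n-2,1}$, I would note that strong conformality of $\varphi^{\perp}$ (the condition $A'_{\varphi}\circ A'_{\varphi^{\perp}}=0$) is equivalent to $G'(\varphi^{\perp})\perp G''(\varphi^{\perp})$ inside $\varphi$; both subbundles are nonzero since $\varphi$ is not (anti)holomorphic, and $\rank\varphi=2$ then forces each to be a line, giving $d_0=d_2=1$. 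Inclusivity of $\varphi$ follows from the fact that $\pi_{e}:(F_{1,n-2,1},J_2)\to G_2(\cn^n)$ is a twistor fibration in the quaternionic-K\"ahler sense.

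For the converse, let $\psi=(V,\varphi^{\perp},W):M\to F_{1,n-2,1}$ be any $J_2$-holomorphic lift. Proposition \ref{pr:J1-J2}(ii) specialised to three legs gives
\begin{equation}
A'_{\varphi^{\perp},V}=0,\qquad A'_{W,\varphi^{\perp}}=0,\qquad A'_{V,W}=0.
\end{equation}
The first yields $G'(\varphi^{\perp})\subset W$, and taking the adjoint of the second (using $(A'_{\psi_i,\psi_j})^{*}=-A''_{\psi_j,\psi_i}$) gives $A''_{\varphi^{\perp},W}=0$, hence $G''(\varphi^{\perp})\subset V$. Since $V\perp W$, this yields strong conformality of $\varphi^{\perp}$; rank counting (both $G'(\varphi^{\perp})$ and $G''(\varphi^{\perp})$ are nonzero lines in $\varphi$) forces $V=G''(\varphi^{\perp})$ and $W=G'(\varphi^{\perp})$, simultaneously giving uniqueness and the stated explicit formula.

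Finally, to reconcile with the lift from Proposition \ref{pr:G4Rn}, I would track the first leg under the embedding $(V,Y^{\perp},W)\mapsto(V+\ov W,\,Y^{\perp}\oplus\ov{Y^{\perp}},\,\ov V+W)$, interpreted via the identification $\rn^{2n}\otimes\cn\cong\cn^n\oplus\ov{\cn^n}$ of complexifications. The first leg of the image is $G''(\varphi^{\perp})+\ov{G'(\varphi^{\perp})}$, which I would show equals the filled-out image of $\d\varphi(\pa/\pa\zbar)$ in the complexified tangent space of $G_4(\rn^{2n})$ at $\varphi(p)$, exactly the first leg of the lift produced in the proof of Proposition \ref{pr:G4Rn}; the compatibility between the two notions of strong conformality for $\varphi^{\perp}$ is already guaranteed by the remark preceding the corollary. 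The hard part, I expect, is not any single step but the bookkeeping in this last part: matching the conjugate complex structure convention on $G_2(\cn^n)$ with the choice of orientation picking out $Q^+$ (rather than $Q^-$) in Proposition \ref{pr:G4Rn}, so that it is $\varphi$ itself, and not $A\circ\varphi$, that appears as the inclusive map, and the two lifts align without an orientation reversal.
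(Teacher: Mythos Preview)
Your proposal is correct, but it takes a different route from the paper.  The paper does not prove the biconditional intrinsically on $G_2(\cn^n)$; instead, the sentence immediately preceding the corollary observes that strong conformality of $\varphi^{\perp}:M\to G_{n-2}(\cn^n)$ is equivalent to strong conformality of its composition with the inclusions $G_{n-2}(\cn^n)\hookrightarrow G_{2n-4}(\rn^{2n})\hookrightarrow G_{2n-4}(\cn^{2n})$, and then Proposition~\ref{pr:G4Rn}, applied to the image of $\varphi$ in $\wt G_4(\rn^{2n})$, yields the equivalence ``inclusive $\Leftrightarrow$ $\varphi^{\perp}$ strongly conformal'' in one stroke.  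Uniqueness and the explicit formula are then read off from Example~\ref{ex:str-conf}(iv).  You instead argue both implications directly---the forward one via Example~\ref{ex:str-conf}(iv) together with the Eells--Salamon correspondence identifying $Q$ with $F_{1,n-2,1}$, the converse via Proposition~\ref{pr:J1-J2}(ii) and an adjoint argument---and only afterwards verify compatibility with the real-Grassmannian lift by tracking the embedding $F_{1,n-2,1}\hookrightarrow F^{\rn}_{2,2n-4}$.  Your approach is more self-contained for the complex case (it does not lean on Proposition~\ref{pr:G4Rn} for the main equivalence) but longer; the paper's is terser because the logical flow runs the other way, deducing the complex statement \emph{from} the real one, so that compatibility of the two lifts is automatic rather than something to be checked.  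Your caution about the orientation and conjugate-complex-structure bookkeeping in the final step is well placed: that is precisely what the paper sidesteps by reversing the direction of the argument.
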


(iii) Lastly, we consider the quaternionic projective space $\HP{m-1}$. On identifying $\hn^m$ with $\cn^{2m}$, the space $\HP{m-1}$
can be thought of as the totally geodesic submanifold of $G_2(\cn^{2m})$
given by $\{Y \in G_2(\cn^{2m}) : Y \text{ is quaternionic}\}$.  The quaternionic 
 K\"ahler structure on $G_2(\cn^{2m})$ has quaternionic twistor space 
$Q = \pi_e^J: F^J_{1, 2m-2} \to \HP{m-1}$; 
as in Example \ref{ex:cpn-quat}(ii), this is the standard fibration
$\CP{2m-1} \to \HP{m-1}$. 
We have an embedding
of $F^J_{1, 2m-2}$ in $F_{1,2m-2,1}$, covering the inclusion of $\HP{m-1}$ in
$G_2(\cn^{2m})$, given by $F^J_{1, 2m-2} \cong \CP{2m-1} \ni V  \mapsto \bigl(V, (V+JV)^{\perp},JV \bigr) \in F_{1,2m-2,1}$. 

Using $G''(\varphi) = JG'(\varphi)$,
we deduce the following from Corollary \ref{co:G2Cn}, cf.\ \cite[Proposition 5.7]{burstall-wood}.

\begin{corollary}
Let $\varphi:M \to \HP{m-1}$ be non-constant and weakly conformal.
Then $\varphi$ is inclusive if and only if it is \emph{reducible}, i.e., 
its Gauss transform $G'(\varphi)$ has rank one.
In this case, $G'(\varphi^{\perp})$ also has rank one and
$\varphi$ has a unique $J_2$-holomorphic lift to $F^J_{1, 2m-2}$
given by $\psi = (J G'(\varphi^{\perp}), \varphi^{\perp}, G'(\varphi^{\perp}) )$.
On including $F^J_{1, 2m-2}$ in $F_{1,2m-2,1}$, this agrees with the lift given in Corollary \ref{co:G2Cn}.
\end{corollary}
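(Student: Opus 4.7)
The plan is to reduce to Corollary \ref{co:G2Cn} by viewing $\varphi$ as a map into $G_2(\cn^{2m})$ through the totally geodesic inclusion $\HP{m-1}\hookrightarrow G_2(\cn^{2m})$, and then to check that the quaternionic structure automatically refines both the lift and the strong-conformality condition to the rank-one condition claimed. Explicitly, I would first observe that since the inclusion $F^J_{1,2m-2}\hookrightarrow F_{1,2m-2,1}$ covers $\HP{m-1}\hookrightarrow G_2(\cn^{2m})$, the map $\varphi$ is inclusive as a map into $\HP{m-1}$ if and only if the unique $J_2$-holomorphic lift $\psi=(V,\varphi^\perp,W)$ supplied by Corollary \ref{co:G2Cn} (which exists iff $\varphi^\perp$ is strongly conformal) already satisfies $W=JV$, i.e.\ lies in $F^J_{1,2m-2}$.

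Next I would verify that for any quaternionic subbundle, the Gauss bundles satisfy $G''(\cdot)=JG'(\cdot)$. The identity $\pa_z\circ J=J\circ\pa_{\zbar}$ (since $J$ is $\cn$-conjugate-linear on constant frames) together with $J\varphi^\perp=\varphi^\perp$ yields $A''_{\varphi^\perp}\circ J=J\circ A'_{\varphi^\perp}$ on $\varphi^\perp$, and therefore $G''(\varphi^\perp)=JG'(\varphi^\perp)$; the analogous identity holds for $\varphi$. Consequently, the lift $(G''(\varphi^\perp),\varphi^\perp,G'(\varphi^\perp))$ from Corollary \ref{co:G2Cn} automatically has the form $(JW,\varphi^\perp,W)$ with $W=G'(\varphi^\perp)$ and so takes values in $F^J_{1,2m-2}$. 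This simultaneously shows that $\varphi$ is inclusive iff $\varphi^\perp$ is strongly conformal, and gives the explicit formula $\psi=(G'(\varphi^\perp),\varphi^\perp,JG'(\varphi^\perp))$ claimed (after using $G''(\varphi^\perp)=JG'(\varphi^\perp)$ to rewrite the first leg).

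The remaining step is to translate ``$\varphi^\perp$ is strongly conformal'' into ``$G'(\varphi)$ has rank one''. Here the central observation is that the Hermitian inner product on $\cn^{2m}$ and $J$ combine into a $\cn$-bilinear \emph{skew-symmetric} form $\omega(v,w)=\langle v,Jw\rangle$, so that $\omega(v,v)=0$ for every $v$; hence any rank-one complex subbundle $L$ automatically satisfies $L\perp JL$. Since $\rank\varphi=2$ and $G'(\varphi^\perp)\subset\varphi$, strong conformality of $\varphi^\perp$, i.e.\ $G'(\varphi^\perp)\perp JG'(\varphi^\perp)=G''(\varphi^\perp)$, is therefore equivalent to $\rank G'(\varphi^\perp)\leq 1$ (rank $2$ would force the two bundles to coincide with $\varphi$). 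Non-constancy of $\varphi$ rules out rank zero, giving $\rank G'(\varphi^\perp)=1$. Finally, the Hermitian adjoint relation $(A'_\varphi)^*=-A''_{\varphi^\perp}$ combined with $A''_{\varphi^\perp}\circ J=J\circ A'_{\varphi^\perp}$ gives $\rank A'_\varphi=\rank A''_{\varphi^\perp}=\rank A'_{\varphi^\perp}$, so $\rank G'(\varphi)=\rank G'(\varphi^\perp)$, completing the equivalence with reducibility and yielding the claimed equality of ranks.

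I expect the main obstacle to be purely notational: tracking which leg of the flag in $F^J_{1,2m-2}\subset F_{1,2m-2,1}$ corresponds to $V$ versus $W$ under the various identifications, and ensuring the sign conventions (in particular $J^{-1}=-J$ acting on subspaces) are handled so that $W=JV$ really gives membership in $F^J$. Once the $J$-equivariance of the Gauss operators and the skew-symmetry of $\omega$ are in place, the rest is a direct assembly of Corollary \ref{co:G2Cn} with these two structural facts.
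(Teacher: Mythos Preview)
Your proposal is correct and follows essentially the same route as the paper: the paper's proof consists solely of the sentence ``Using $G''(\varphi)=JG'(\varphi)$, we deduce the following from Corollary~\ref{co:G2Cn}, cf.\ \cite[Proposition 5.7]{burstall-wood}'', and your plan is precisely an unpacking of that---establishing the $J$-equivariance $G''=JG'$ for quaternionic subbundles, invoking Corollary~\ref{co:G2Cn}, and translating strong conformality of $\varphi^\perp$ into the rank-one condition via the skew-symmetric form $\omega(v,w)=\langle v,Jw\rangle$. Your anticipated ``purely notational'' obstacle about which leg is which is real (the formula in the statement has $G'(\varphi^\perp)$ as first leg whereas Corollary~\ref{co:G2Cn} gives $G''(\varphi^\perp)=JG'(\varphi^\perp)$ there), but since $J^2$ acts trivially on lines this is indeed only a labelling issue and does not affect the argument.
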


\section{Explicit formulae for twistor lifts} \label{sec:explicit}
\subsection{Formulae from the Grassmannian model} \label{subsec:formulae-Grass-model} 

Corollary \ref{co:lift-from-unitons} describes how twistor lifts which are $J_2$-holomorphic
can be obtained from (partial) uniton factorizations with basic unitons.
We show how to find explicit formulae for these lifts. 

First of all, we recall from \cite{unitons} how to find explicit formulae for all
polynomial extended solutions, and thus for all harmonic maps of finite uniton number from a surface to $\U{n}$, in terms of arbitrary holomorphic data.

We need the following construction of M.~A.\ Guest \cite{guest-update}:
Let $r \in \nn$ and let $X$ be an arbitrary holomorphic subbundle of
$(\CC^{rn},\pa_{\zbar})$.
Set $W$ equal to the following subbundle of $\HH_+$:
\begin{equation} \label{W}
W = X + \lambda X_{(1)} + \lambda^2 X_{(2)} + \cdots + \lambda^{r-1} X_{(r-1)}
	+ \lambda^r \HH_+\,, 
\end{equation}
where $X_{(i)}$ denotes the $i$th osculating space of $X$ as in Example \ref{ex:fact-images}.
Then $W$ is an extended solution satisfying \eqref{W-finite}, and all such $W$ are given this way (we can take $X = W$); we shall say that $X$
\emph{generates} $W$. 
We shall describe subbundles of a trivial holomorphic bundle $(\CC^N, \pa_{\zbar})$ ($N \in \nn$) by giving \emph{meromorphic spanning sets} for them, as in \cite[\S 4.1]{unitons}.
As in that paper, by the \emph{order $o(L)$} of a meromorphic section $L$ of $\HH_+$ we mean the least integer $i$ such that
$P_iL \neq 0$ (where $P_iL$ is the $i$th Fourier coefficient of $L$ as in \eqref{fourier-coeffs}); equivalently, $L = \lambda^{o(L)} \wh{L}$ for some $\wh{L} = \sum_{\ell \geq 0} \lambda^{\ell}\wh{L}_{\ell}$ with $\wh{L}_{0}$ non-zero.
  Let $\{L_j\}$ be a meromorphic spanning set for $X$.  Then a meromorphic spanning set
for $W \mod \lambda W$ is $\{\lambda^k (L_j)^{\! (k)} : 0 \leq k \leq r\}$.

Now choose a uniton or partial uniton factorization \eqref{lambda-filt} of $W$ satisfying
\eqref{F-basic}.   Thus, writing $W = \Phi\H_+$, the extended solution $\Phi$ has (partial) uniton factorization \eqref{Phi-fact} with unitons $\alpha_i$ which are basic for $\Phi_{i-1}$.
The corresponding $F$-filtration $(Y_i)$ is given by \eqref{Y-from-W} and the $A^{\varphi}_z$-filtration corresponding
to that is given by $Z_i = P_0 \circ \Phi^{-1}Y_i$. 
If $\{H^i_j\}$ is a meromorphic spanning set for $Y_i$ $\mod \lambda W$, then
$\{P_0 \circ \Phi^{-1}H^i_j\}$ is a meromorphic spanning set for $Z_i$; then a basis for the legs $\psi_i = Z_i \ominus Z_{i+1}$ can be found from that set by the Gram--Schmidt process.
If all the legs are non-zero, this gives the $J_2$-holomorphic twistor lift of $\varphi = \Phi_{-1}$ described in Corollary
\ref{co:lift-from-unitons}.

To calculate this explicitly, first, we need to find a meromorphic spanning set $\{H^i_j\}$
for each
$Y_i$ from a meromorphic spanning set for $W$; in the examples below, this is done
by finding a meromorphic spanning set for $W$ adapted to the filtration $(Y_i)$. 
 
Second, \emph{let $S^{r}_{s}$ denote the sum of all $r$-fold products
of the form $\Pi_r \cdots \Pi_1$  where exactly $s$ of the $\Pi_j$ are $\pi_{\alpha_j}^{\perp}$ and the other $r-s$ are $\pi_{\alpha_j}$}.
Then given a meromorphic spanning set $\{H^i_j\}$ for $Y_i$ $\mod \lambda W$,
the meromorphic spanning set $\{P_0 \circ \Phi^{-1}H^i_j\}$ for $Z_i$ is given by
$P_0 \circ \Phi^{-1}H^i_j = \sum_{s=k}^r S^r_s P_s(H^i_j)$. 
We now see how this works for our two main examples; other examples can be done similarly.

\begin{example}
We find explicit formulae for the canonical twistor lift of the harmonic map
$\varphi = \Phi_{-1}$ defined by a normalized extended solution $W = \Phi\H_+$\,.
Let $X$ generate $W$ and let $\{L_j\}$ be a meromorphic spanning set for $X$.
The filtration $(Y_i)$ which gives the canonical twistor lift is given by \eqref{can-filt}, and a meromorphic spanning set for $Y_i \mod \lambda W$  is $\{\lambda^k (L_j)^{\! (k)} : i \leq o(L_j)+k \leq r \}$.

Let $\alpha_1,\ldots, \alpha_r$ be the Uhlenbeck unitons of $\Phi$, see Example \ref{ex:can-filt2}; then a
meromorphic spanning set for the corresponding $A^{\varphi}_z$-filtration $Z_i = P_0 \circ \Phi^{-1}Y_i$ is given by
\begin{equation*}
Z_i 	= \spa\bigl\{ \sum_{s=k}^r S^r_s P_{s-k}(L_j)^{\! (k)}\
	:\ i \leq o(L_j)+k \leq r \bigr\}.
\end{equation*} 
Applying the Gram--Schmidt process gives explicit formulae for the canonical twistor lift $\psi$ of $\varphi = \Phi_{-1}$ defined by $\Phi$.
\end{example}

\begin{example}
Suppose that $\varphi:M \to G_*(\cn^n)$ is harmonic map of finite uniton number.
We find explicit formulae for Burstall's twistor lift (Example \ref{ex:burstall-J2})
of $\varphi$.
As in Lemma \ref{le:Q} 
we can find a $\nu$-invariant polynomial extended solution $\Phi$ with $\Phi_{-1} = \varphi$.
Set $W = \Phi\H_+$ as usual.
Let $X$ generate $W$ and let $\{L_j\}$ be a meromorphic spanning set for $X$.
 The relevant filtration $(Y_i)$ is described in
Example \ref{ex:nilconf-Y};
a meromorphic spanning set for $Y_i \mod \lambda Y_i$ is given by
$\{\lambda^{k} (L_j)^{\! (k)} : i \leq k \leq r\}$.

Let $\alpha_1,\ldots, \alpha_r$ be the unitons of $\Phi$ in the uniton factorization by $A_z$-images, see Example \ref{ex:fact-images}. 
Then a meromorphic spanning set for the corresponding $A^{\varphi}_z$-filtration $Z_i = P_0 \circ \Phi^{-1}Y_i$ is given by 
\begin{equation*}
Z_i 	= \spa\bigl\{ \sum_{s=k}^r S^r_s P_{s-k}(L_j)^{\! (k)}\ :\ i \leq k \leq r \bigr\}.
\end{equation*}
Applying the Gram--Schmidt process gives explicit formulae for the twistor lift $\psi$
of $\pm\varphi$ defined in Example \ref{ex:burstall-J2}.
\end{example}

Note that, (i) in both cases, the harmonic map $\varphi$ is given (a) as the product of unitons by the formulae in \cite[\S 4.1]{unitons},
(b) as the sum $\sum\psi_{2j}$ of the even-numbered legs of $\psi$;
(ii) we can generate \emph{all} harmonic maps $M \to G_*(\cn^n)$ of finite uniton number, and the above twistor lifts of them, by \emph{freely} choosing meromorphic functions $L_j: M \to \CC^n$ and computing the lifts as above, giving \emph{completely explicit formulae for all harmonic maps of finite uniton number from a surface to a complex Grassmannian and their twistor lifts}.

\subsection{Examples} \label{subsec:exs}

In the following examples, as before, for any $i \in \nn$ and
holomorphic map $f:M \to G_*(\cn^n)$, equivalently, holomorphic subbundle of $\CC^n$, we denote by $f_{(i)}$
 the $i$th osculating space spanned by derivatives of local holomorphic sections of $f$ up to order $i$. 

\begin{example} \label{ex:r3n4}
Let $\varphi$ be a harmonic map from a Riemann surface to $G_2(\cn^4)$ of (minimal) uniton number $3$.  We shall find
a $J_2$-holomorphic twistor lift of $\pm\varphi$.
By \cite[Corollary 5.7]{unitons}, there is a polynomial extended solution $\Phi$ of degree $3$
with $\Phi_{-1} = \pm\varphi$.  On replacing $\varphi$ by its orthogonal complement if necessary, we may
assume that  $\Phi_{-1} = \varphi$.
Set $W = \Phi\H_+$\,.  This is closed under $\nu$ and so has the form
\be{W-G2C4}
W=\spa\{H_0+\lambda^2 H_2\} + \lambda\delta_2  +  \lambda^2 \delta_3 +  \lambda^3\HH_+
\ee
where $H_0, H_2:M \to \cn^4$ are meromorphic maps (equivalently, meromorphic sections of the trivial bundle $\CC^4$), the
$\delta_i$ are subbundles of $\CC^4$, and
setting $\delta_1 = \spa\{H_0\}$, we have $(\delta_1)_{(1)} \subset \delta_2$ and $(\delta_2)_{(1)} \subset \delta_3$.

Now, none of the $\delta_i$ is constant, otherwise, $(\pi_{\delta_i} + \lambda^{-1}\pi_{\delta_i})\Phi$ would be a polynomial extended solution of degree $2$ associated to $\varphi$, contradicting the definition of uniton number.  It follows that 
$\delta_1$ is full,
$\delta_i = (H_0)_{(i-1)}$ and $\delta_i$ has rank $i$ \ $(i=1,2,3)$.

All the filtrations of Examples
\ref{ex:burstall-J2}, \ref{ex:burstall-dual} and \ref{ex:str-conf} have length $3$ and agree; further,
$\rank Z_i = 4-i$ \ $(i=0,1,2,3,4)$, so that the legs $\psi_i = Z_i \ominus Z_{i+1}$
are of rank one.  With notation as in those examples, since $Z_1 \neq \CC^4$, either
$U_1 \neq \varphi$ or $V_1 \neq \varphi^{\perp}$; assume without loss of generality the former.  Then we obtain the diagram \eqref{diag:G24},
with $Z_i = \sum_{j=i+1}^4 \psi_j$, $\varphi = \psi_0 \oplus \psi_2$ and $\varphi^{\perp} = \psi_1 \oplus \psi_3$.
As before the arrows show the possible non-zero second fundamental forms $A'_{\psi_i,\,\psi_j}$. 
The second diagram is identical to the first, but is drawn to be in keeping with our earlier diagrams, with `across' arrows going down.

\begin{equation}
\begin{gathered}\label{diag:G24}
\xymatrixrowsep{1.8pc}\xymatrix{
\psi_0  \ar[rd] \ar[r] &\psi_1 \ar[ld] \\
\psi_2 \ar[r]  \ar[u] &\psi_3 \ar[u]}
\qquad \qquad \qquad
\xymatrixrowsep{0.3pc}\xymatrixcolsep{1.5pc}\xymatrix{ 
\psi_0 \ar[rd] \ar[rddd]  & \\
&  \psi_1 \ar[ld] \\
\psi_2 \ar[rd] \ar[uu] &\\
& \psi_3 \ar[uu]}
\end{gathered}
\end{equation}

\medskip

As in Example \ref{ex:segal}, one factorization of $\Phi$ is provided by its Segal unitons, which we shall denote by
$\beta_i$.  According to
\cite[Example 4.9(i)]{ferreira-simoes-wood}, these are given by 
$\beta_1 = h$, $\beta_2 = h_{(1)}$ and
\be{beta_3}
\beta_3 = \spa\{H_0 + \pi_{h_{(1)}}^{\perp}H_2\} \oplus G^{(1)}(h) \oplus G^{(2)}(h)
=\spa\{H_0 + \pi_{h_{(2)}}^{\perp}H_2\} \oplus G^{(1)}(h) \oplus G^{(2)}(h).
\ee
where $G^{(i)}(h)$ denotes the $i$th $\pa'$-Gauss transform of $h$
 (see \S \ref{subsec:Grass}).  Hence
\be{phi-r3n4} 
\varphi = \spa\{H_0 + \pi_{h_{(1)}}^{\perp}H_2\} \oplus G^{(2)}(h)=\spa\{H_0 + \pi_{h_{(2)}}^{\perp}H_2\} \oplus G^{(2)}(h).
\ee
 
Another factorization of $\Phi$ is provided by the Uhlenbeck unitons, see
Example \ref{ex:can-filt2}; we shall denote these by $\gamma_i$.
By the formulae in \cite[Example 4.6]{unitons}, they are given by
$\gamma_1 = h_{(2)}$, $\gamma_2 = {h_{(1)}}$, and 
$\gamma_3 = \spa\{H_0 + \pi_{h_{(2)}}^{\perp}H_2 \}$.

{}From Corollary \ref{co:lift-from-unitons}, we have $\psi_0 = \gamma_3$ and
$\psi_1= \gamma_3^{\perp} \cap \gamma_2 = G^{(1)}(h)$.  It follows from \eqref{phi-r3n4} that
$\psi_2 = G^{(2)}(h)$.
{}From \eqref{beta_3}, we have $\beta_3=\psi_0 \oplus \psi_1 \oplus \psi_2$ so that
$\psi_3=\beta_3^\perp$.   Thus we obtain the $J_2$-holomorphic twistor lift
$\psi = (\gamma_3\,,\, G^{(1)}(h) \,,\, G^{(2)}(h) \,,\, \beta_3^\perp):M \to F_{1,1,1,1}$ of $\varphi$.

{}From the diagram we see that $\varphi^{\perp} = \psi_1 \oplus \psi_3$  is the sum of the harmonic map $\psi_1  = G^{(1)}(h)$ and the antiholomorphic subbundle $\psi_3$ of $\{\psi_1 \oplus G^{(1)}(\psi_1)\}^{\perp}  =
\psi_0 \oplus \psi_3$, in accordance with J. Ramanathan's description
\cite{ramanathan}.
\end{example}

We finish with two examples: the first one real and the second one symplectic.

\begin{example}  Let $H_0,H_1,H_2,H_3:M\to\cn^n$ be meromorphic maps, set $\delta_1=\spa\{H_0,H_2\}$,
and consider the $\nu$-invariant extended solution $W = \Phi\H_+$ given by
$$
W=\spa\{H_0+\lambda^2H_1,H_2+\lambda^2H_3\}+\lambda\delta_2+\lambda^2\delta_3+\lambda^3\HH_+\,,
$$
where $\ul{0}\subset\delta_1\subset\delta_2\subset\delta_3\subset\CC^n$ is a superhorizontal sequence.
By the formulae in \cite[Example 4.6]{unitons}, we calculate the Uhlenbeck unitons as
\be{uhl1}
\gamma_1=\delta_3\,, \quad \gamma_2=\delta_2\,,  \quad
\gamma_3=\spa\{H_0+\pi_{\delta_3}^{\perp}H_1, H_2+\pi_{\delta_3}^{\perp}H_3\}\,,
\ee
and the corresponding
harmonic map $\varphi = \Phi_{-1}:M \to G_*(\cn^n)$ is given by 
$
\varphi=\gamma_3 \oplus (\delta_2^\perp\cap\delta_3).
$
{}From Proposition \ref{pr:Y-from-W} we calculate the canonical lift
$(\psi_0,\psi_1,\psi_2,\psi_3):M \to\  F$ as follows: 
$\psi_0=Z_1^{\perp}=\gamma_3$ and $\psi_1=\gamma_3^{\perp}\cap\gamma_2=\delta_1^{\perp}\cap\delta_2$; since $\varphi =\psi_0 \oplus \psi_2$, this gives $\psi_2=\delta_2^\perp\cap\delta_3$. Finally, $\psi_3=Z_3=\pi_{\gamma_3}^{\perp}(\delta_3^{\perp})$. 

Now, as in \cite[\S 6.6]{unitons}, we can choose the data $H_i$, $\delta_2$, $\delta_3$ such that $W$ is real of degree $3$.  Then,
by Proposition \ref{pr:realW-Y} we have $\ov\psi_0=\psi_3$ and $\ov\psi_1=\psi_2$; thus
$n$ is even, say $n = 2m$, and  $\varphi=\psi_0\oplus\psi_2 = \psi_0 \oplus \ov\psi_1$ defines a harmonic map from $M$ to $\O{2m}/\U m$. The canonical twistor lift of $\varphi$ defined by $\Phi$ is the $J_2$-holomorphic map
$
(\psi_0,\psi_1,\ov\psi_1,\ov\psi_0):M\to\ \O{2m} \big/ \U 2\times\U{m-2}
$
where $\psi_0 = \gamma_3$ and $\psi_1 = \gamma_3^{\perp}\cap\delta_2$ with $\gamma_3$ is given by \eqref{uhl1}.
\end{example}

\begin{example} Let $H_0,H_2,H_3:M\to\cn^6$ be meromorphic maps with $\spa\{H_0\}:M \to \CP{5}$ full, and consider the $\nu$-invariant extended solution $W = \Phi\H_+$ given by
$$
W=\spa\{H_0+\lambda^2H_2\}+
\lambda\spa\bigl\{(H_0)_{(1)}, H_3\bigr\}
+\lambda^2 \spa\bigl\{(H_0)_{(2)}, (H_3)_{(1)}\bigr\} 	+\lambda^3\HH_+.
$$
Set 
$$
\delta_1=\spa\{H_0\},\quad \delta_2=(H_0)_{(1)}+\spa\{H_3\},\quad \delta_3=(H_0)_{(2)}+(H_3)_{(1)}.
$$
A simple calculation shows that the Uhlenbeck unitons of $W$ are given by
$\gamma_1=\delta_3$, $\gamma_2=\delta_2$ and
$\gamma_3=\spa\{H_0+\pi_{\delta_3}^{\perp}H_3\}$. The corresponding harmonic map
$\varphi = \Phi_{-1}$ of minimal uniton number $3$ is given by 
$$
\varphi=\gamma_3\oplus (\delta_2^\perp\cap\delta_3):M\to G_3(\cn^6).
$$
{}From Proposition \ref{pr:Y-from-W}, the legs of the canonical $A^\varphi_z$-filtration
defined by $\Phi$ are given by $\psi_0=\gamma_3$, 
$\psi_1=\delta_2\ominus\delta_1$, $\psi_2=\delta_3\ominus\delta_2$ and $\psi_3=\pi_{\gamma_3}^{\perp}\delta_3^\perp$.
The canonical twistor lift of $\varphi$ defined by $\Phi$ is the  $J_2$-holomorphic map
$\psi=(\psi_0,\psi_1,\psi_2,\psi_3):M\to F_{1,2,2,1}$.

As in \cite[Example 6.31]{unitons} we see that $W$ is symplectic of degree $3$ if $H_0$ is totally $J$-isotropic and $H_3$
is a section of $(H_0)_{(3)}$. In this case, $\varphi$ is a harmonic map from $M$ to $\Sp3/\U 3$, and $J\psi_0=\psi_3$, $J\psi_1=\psi_2$.
Then the canonical twistor lift of $\varphi: M \to \Sp3/\U 3$ defined by $\Phi$
is the $J_2$-holomorphic map  $\psi:M \to Z^J_{1,2} =  \Sp3/\U1\times\U2$ given explicitly by
$\psi = (\psi_0, \psi_1, J\psi_1, J\psi_0)$ where $\psi_0 = \spa\{H_0+\pi_{\delta_3}^{\perp}H_3\}$ and
$\psi_1 = G'(\delta_1) + \spa\{H_3\}$.

\end{example}

\end{document}